\documentclass[11pt]{article}

\setlength{\oddsidemargin}{-0.2in}
\setlength{\evensidemargin}{-0.2in} 
\setlength{\textwidth}{6.54in}
\setlength{\topmargin}{-0.3in} 
\setlength{\textheight}{8.9in}

\linespread{1}
\usepackage{algorithm}
\usepackage{algpseudocode}
\usepackage{fancyvrb}
\usepackage{makeidx}
\usepackage{amsmath,amssymb, amsthm}
\usepackage{latexsym}
\usepackage[toc,page]{appendix}
\usepackage{graphicx}
\usepackage{multirow}
\usepackage{bbm}
\usepackage{amsfonts}
\usepackage{mathtools}
\usepackage{textcomp}
\usepackage{hyperref}
\usepackage{verbatimbox}
\usepackage{subcaption}
\usepackage{graphicx}
\usepackage{array}
\usepackage{booktabs}
\usepackage{textcomp,array}
\usepackage{tikz}
\usepackage{enumerate}
\usepackage{tablefootnote}
\usetikzlibrary{positioning}

\DeclareMathOperator*{\Argmax}{Argmax}
\graphicspath{ {images/} }
\usepackage{url}


\makeatletter
\def\url@leostyle{%
  \@ifundefined{selectfont}{\def\UrlFont{\sf}}{\def\UrlFont{\small\ttfamily}}}
\makeatother
\urlstyle{leo}

\newtheorem{assumption}{Assumption}
\newtheorem{definition}{Definition}
\newtheorem{lemma}{Lemma}
\newtheorem{proposition}{Proposition}

\newtheorem{remark}{Remark}
\newtheorem{theorem}{Theorem}

\DeclareMathOperator*{\argmin}{argmin} 
 
\newcommand{\N}{\mathbb{N}}

\newcommand{\R}{\mathbb{R}}

\newcommand{\cmark}{\ding{51}}%
\usepackage{pifont}
\newcommand{\xmark}{\ding{55}}%

\begin{document}
\title{Smoothed Proximal Lagrangian Method for Nonlinear Constrained Programs
\footnote{Three authors contributed equally to this work and are listed in alphabetic order.}}
\author{
Wenqiang Pu
\thanks{Shenzhen Research Institute of Big Data, The Chinese University of Hong Kong (Shenzhen), China.}
\and Kaizhao Sun
\thanks{DAMO Academy, Alibaba Group US, Bellevue, WA, USA.}
\and Jiawei Zhang
\thanks{Massachusetts Institute of Technology, Cambridge, MA, USA.}
}
\date{\today}
\maketitle

\begin{abstract}
    This paper introduces a smoothed proximal Lagrangian method for minimizing a nonconvex smooth function over a convex domain with additional explicit convex nonlinear constraints. Two key features are 1) the proposed method is single-looped, and 2) an first-order iteration complexity of $\mathcal{O}(\epsilon^{-2})$ is established under mild regularity assumptions. The first feature suggests the practical efficiency of the proposed method, while the second feature highlights its theoretical superiority. Numerical experiments on various problem scales demonstrate the advantages of the proposed method in terms of speed and solution quality. 
\end{abstract}

\section{Introduction}
In this paper, we consider the following optimization problem 
\begin{align}\label{eq: nlp}
	\min_{x \in X} \{ f(x) :~ h(x) \leq 0 \},
\end{align}
where $x\in \R^n$ are decision variables, $X \subseteq \R^n$ is a convex and compact set that is easy to project onto,  the objective function $f:\R^n \rightarrow \R$ is continuously differentiable and possibly nonconvex, and the mapping $h:\R^{n} \rightarrow \R^m$ consists of $m$ convex and continuously differentiable functions. {We further assume the analytical form of $X$ \eqref{eq: X} for the purpose of theoretical analysis, which can be described as}
\begin{align}\label{eq: X}
	X: = \{x\in \R^n:~ g(x) \leq 0\},
\end{align}
where $g:\R^n \rightarrow \R^l$ consists of $l$ convex and continuously differentiable functions. Our proposed algorithm queries zero/first-order oracles of $f$ and $h_i$'s for $i \in [m] := \{1, \cdots, m\}$ and projection oracles of $X$.

In this paper, we propose a \textit{practically simple} while \textit{theoretically superior} algorithm for problem \eqref{eq: nlp}. Define the proximal Lagrangian function associated with problem \eqref{eq: nlp} as 
\begin{align}
	K(x, z, y) := f(x) + \langle y, h(x) \rangle + \frac{p}{2}\|x - z\|^2, 	
\end{align}
where $(x, z, y) \in \R^n \times \R^n \times \R^m$ and $p > 0$. 
We introduced the Smoothed Proximal Lagrangian Method (SPLM) in Algorithm \ref{Alg:SProxALM}. 

\begin{algorithm}[H]
	\caption{Smoothed Proximal Lagrangian Method (SPLM)} \label{Alg:SProxALM}
\begin{algorithmic}[1]
\State {\textbf{Input:} problem data $(f, h, X)$;}
\State {\textbf{Initialize:} positive parameters $(\alpha, \beta, c, p, B)$, $x^0 \in X$, $z^0 \in X$, and $y^0 \in Y := [0, B]^m$; } \label{alg: init}
\For{$t=0,1,2,\ldots,$} 
\State{$x^{t+1} = \Pi_{X}(x^t - c\nabla_x K(x^t, z^t, y^{t}))$;} \label{alg: primal}
\State{$y^{t+1} = \Pi_{Y}(y^t + \alpha h(x^{t+1}))$;} \label{alg: dual}
\State{$z^{t+1} = z^t + \beta(x^{t+1}-z^t)$; }\label{alg: prox}
\EndFor
\end{algorithmic}
\end{algorithm}

With proper initialization in line \ref{alg: init}, the proposed method updates the primal and dual variables via projected gradient steps as in line \ref{alg: primal} and line \ref{alg: dual}, respectively, and then perform another proximal update in line \ref{alg: prox}. Hence our SPLM is a \textit{single-looped} algorithm invoking only \textit{first-order} oracles of the problem data. A single-looped algorithm can be preferable over algorithms with nested loops \cite{li2021rate, lin2019inexact, melo2020almfullstep} from an implementation point view, as determining the stopping criteria of inner loops can be complex (than needed) in theory and laborious in practice. 

A key feature in Algorithm \ref{Alg:SProxALM} is a projection of dual variables onto a bounded hypercube $Y = [0, B]^m$ in line \ref{alg: dual} for some $B > 0$. This technique has been used in the convergence analysis of augmented Lagrangian methods (ALM) \cite{andreani2008augmented, bertsekas2014constrained} as well as in various applications \cite{sun2021two, sun2029two} to keep dual iterates bounded, but usually feasibility is ensured only when a penalty parameter goes to infinity. However, it is well-known that penalty and ALM-type methods inevitably suffer from numerical issues as the penalty parameter increases. Instead, we shows that in the context of \eqref{eq: nlp}, a constant value of $p$ suffices for the convergence of Algorithm \ref{Alg:SProxALM}, which, to our knowledge, is new to the literature.

Assuming a Slater-like condition and a certain full-rank condition on stationary solutions of \eqref{eq: nlp}, our main result says that, with properly chosen parameters $(\alpha, \beta, c, p, B)$, the proposed method finds an $\epsilon$-stationary point in $\mathcal{O}(\epsilon^{-2})$ iterations, where $\epsilon>0$ measures the violation of feasibility and stationarity. This result significantly improves the existing $\tilde{\mathcal{O}}(\epsilon^{-2.5})$ and $\tilde{\mathcal{O}}(\epsilon^{-3})$ complexity upper bounds\footnote{The $\tilde{\mathcal{O}}$ notation hides the logarithmic dependency on $\epsilon^{-1}$, usually resulting from double-looped implementations. } in the literature \cite{kong2023iteration, lin2019inexact, melo2020almfullstep, melo2020iteration}. We also conduct numerical experiments to validate our claims. 


\subsection{Related Works}
Our algorithmic development is deeply rooted in the classical ALM  \cite{hestenes1969multiplier, powell1967method}, which was proposed in the late 1960s and is still one of the most powerful frameworks for solving constrained optimization problems. As the asymptotic convergence and convergence rate of ALM have been studied for convex programs \cite{rockafellar1973multiplier,rockafellar1976augmented} and smooth nonlinear programs \cite{bertsekas2014constrained}, we review some recent development of first-order algorithms based on ALM (including penalty methods) for problems of the form 
\begin{align}\label{eq: alm literature}
	\min_{x\in \R^n} \left\{ f(x) + g(x) ~|~ Ax = b,\  h(x) \leq 0\right\},
\end{align}
where usually $f$ is assumed to have a Lipschitz gradient, $g$ is a nonsmooth convex function whose proximal oracle is available, and constraints are convex. Note that though it is possible to write $Ax = b$ as $Ax\leq b$ and $-Ax \leq -b$, we distinguish affine equality constraints $Ax=b$ from nonlinear constraints $h(x) \leq 0$ as one usually needs to assume Slater-like conditions or linearly independence constraint qualification (LICQ) for the purpose of theoretical analysis. 

When only affine constraints are present and $g = 0$, Hong \cite{hong2016decomposing} proposed a proximal primal-dual algorithm (prox-PDA) that finds an  $\epsilon$-stationary point in $\mathcal{O}(\epsilon^{-2})$ iterations. Assuming $g$ has a compact domain, Hajinezhad and Hong \cite{hajinezhad2019perturbed} proposed a perturbed prox-PDA that achieves an iteration complexity of $\mathcal{O}(\epsilon^{-4})$. Zeng et al. \cite{zeng2021moreau} proposed a Moreau Envelope ALM (MEAL) with a iteration complexity of $\mathcal{O}(\epsilon^{-2})$ under some technical assumptions on a potential function. 

Note that the iteration complexities of above works are measured by the number of times a (proximal) augmented Lagrangian relaxation is solved. In contrast, there are works that report iteration complexities in terms of \textit{first-order oracles}, i.e., the number of proximal gradient steps, which is more tractable than directly solving the (proximal) augmented Lagrangian relaxation.
We summarize their results and compare to ours in Table \ref{table: summary}, where the term ``ccp" refers to $g$ being closed, convex, and proper.
Lin et al. \cite{lin2019inexact} proposed an inexact proximal point penalty (iPPP) method, where each subproblem is further solved by an adaptive accelerated proximal gradient (APG) steps with some specific line search strategies. The outer penalty method is later improved through a combination with ALM by Li and Xu \cite{LiQu2019alm}. Under a uniform Slater condition, both \cite{LiQu2019alm} and \cite{lin2019inexact} derived an $\tilde{\mathcal{O}}(\epsilon^{-2.5})$ first-order complexity bound. Melo et al. \cite{melo2020iteration, melo2020almfullstep} utilized an accelerated composite gradient method (ACG) \cite{beck2009fast} to solve each proximal ALM subproblem, showing that an $\epsilon$-stationary point can be found in $\tilde{\mathcal{O}}(\epsilon^{-3})$ ACG updates. Later, authors of \cite{melo2020iteration} embedded this inner acceleration scheme of \cite{melo2020iteration, melo2020almfullstep} in a proximal ALM framework to handle nonlinear constraints; they also obtained an improved rate of $\tilde{\mathcal{O}}(\epsilon^{-2.5})$ under a Slater-like condition. Work \cite{sun2022algorithms} considered the case where the objective is a difference-of-convex (DC) function, and combined a smoothing technique with ALM to solve affinely constrained DC programs.
\begin{table}[H]
\begin{center}
\begin{tabular}{c|c|c|c|c|c}
\toprule
Reference & $Ax=b$ & $h(x) \leq 0$ & $g$ & Loops  & First-order Complex. \\
\hline
\cite{melo2020almfullstep} &  \cmark     & \xmark   & ccp with bounded domain &   2  &   $\tilde{\mathcal{O}}(\epsilon^{-3})$    \\
\cite{kong2023iteration}   &  \cmark     &  \cmark   & ccp with bounded domain &   2  &   $\tilde{\mathcal{O}}(\epsilon^{-3})$    \\
\cite{sun2022algorithms}   &  \cmark     & \xmark    & difference-of-convex &   2  &   $\tilde{\mathcal{O}}(\epsilon^{-3})$    \\
\cite{LiXu2020almv2, lin2019inexact}   &  \cmark &  \cmark  & \xmark  &   3  &   $\tilde{\mathcal{O}}(\epsilon^{-2.5})$     \\ 
 \cite{zhang2020proximal, zhang2020global}  &   \cmark &  \xmark  & indicator of polyhedron  &  1   & $\mathcal{O}(\epsilon^{-2})$\\
 \cite{zhang2022iteration} &   \cmark &  \xmark  &  \eqref{eq: X} &  1   & $\mathcal{O}(\epsilon^{-2})$  \\
 This work &   \xmark &  \cmark  & \eqref{eq: X} & 1   & $\mathcal{O}(\epsilon^{-2})$   \\
\bottomrule
\end{tabular}
\caption{Summary of existing algorithms and the proposed method} \label{table: summary}
\end{center}
\end{table}

In contrast to all previously mentioned works,  Zhang and Luo proposed a single-looped proximal ALM and established an $\mathcal{O}(\epsilon^{-2})$ first-order (and also iteration) complexity when $g =\delta_X$ is an indicator function of a hypercube \cite{zhang2020proximal} or a polyhedron \cite{zhang2020global}. Their analysis is further generalized in \cite{zhang2022iteration} to handle the case where $X$ is a general convex set defined by smooth constraints. This paper is built upon this line of works, replacing explicit affine constraints $Ax=b$ by nonlinear constraints $h(x) \leq 0$. However, our modifications in algorithmic design and convergence analysis are not trivial. In fact, the framework of ALM is powerful enough to handle even non-convex constraints. Since we consider convex constraints in this paper, we do not delve extensively into these bodies of literature;  for example, see \cite{li2021rate, Sahin2019alm, sun2021dual} and references therein for some recent efforts. It would be interesting to generalize the proposed method to handle nonconvex constraints, and we reserve it for our future investigations. 

Other methods for constrained problems include those based on the proximal point method \cite{boob2022stochastic, boob2022level} or the sequential quadratic program (SQP)  \cite{curtis2021inexact, berahas2021sequential, berahas2022accelerating}. Though capable of handling more general constraints, in the context of problem \eqref{eq: nlp}, these methods ether lack explicit first-order complexity results or yield higher complexities when invoking first-order methods for their subproblems. 

\subsection{Our Contributions}
{Our method builds upon the foundation laid by Zhang et al. \cite{zhang2022iteration}. Unlike the projection requirements for complex convex sets in previous works, our approach assumes simpler constraints \( g(x) \leq 0 \) with a readily available projection oracle. We address more complex constraints \( h(x) \leq 0 \) through dualization, focusing on solving the Lagrangian rather than projecting onto difficult feasible regions. Our contributions are:
\begin{itemize}
    \item \textbf{$\mathcal{O}(1/\epsilon^2)$ Iteration Complexity:} We propose a single-looped first-order method for solving optimization problems in the form specified in \eqref{eq: nlp}. This method exhibits an iteration complexity of $\mathcal{O}(1/\epsilon^2)$, which improves upon previous bounds under comparable assumptions. Table \ref{table: summary} highlights these advancements, offering a new perspective on the efficiency of existing algorithms. Our approach simplifies implementation by eliminating the need for multiple nested loops and complex stopping conditions within these loops.
    \item \textbf{Containment Strategy for Dual Variable:} The introduction of non-linear constraints complicates the smoothness conditions typically associated with the Lagrangian, motivating the use of a bounding technique for dual variables. Specifically, we confine dual variables within a pre-selected hypercube \([0, B]^m\). This containment strategy, while common in practice, lacked a formal theoretical justification until now. We close this gap by establishing new error bounds that accommodate the artificial bounding of dual variables.
    \item \textbf{A New Dual Error Bound:} The core of our theoretical analysis lies in deriving error bounds for scenarios where dual variables are bounded. This aspect differs significantly from references \cite{zhang2020global, zhang2020proximal, zhang2022iteration}, where previous bounds were based on assumptions like the Hoffman bound associated with polyhedral sets \(X\). Our results extend the applicability of these bounds to more general settings involving explicit non-linear constraints.
\end{itemize}
}

\subsection{Notations and Organization}
We denote the set of positive integers up to $m$, the set of nonnegative integers, the set of real numbers, and the $n$-dimensional (nonnegative) real Euclidean space by $[m]$, $\N$, $\R$, and $\R^n_{(+)}$, respectively. 
The inner product of $x,y\in \R^n$ is denoted by $x^\top y$ or $\langle x, y \rangle$, and the Euclidean norm of $x$ is denoted by $\|x\|$. 
For a matrix $Q\in \R^{m\times n}$, we denote its smallest and largest eigenvalues by $\sigma_{\min}(Q)$ and $\|Q\|$; for an index set $\mathcal{S} \subseteq [m]$, we use $Q_{\mathcal{S}} \in \R^{|\mathcal{S}| \times n}$ to denote the submatrix of $A$ consisting of rows in $\mathcal{S}$.
For a differentiable mapping $h:\R^n \rightarrow \R^m$, we write $\nabla h(x) = [\nabla h_1(x), \cdots, \nabla h_m(x)]^\top \in \R^{m \times n}$. For a set $X\subseteq \R^n$, we use $\delta_X$ to denote its $0/\infty$-indicator function, and $\Pi_X(\cdot)$ to denote its projection operator in $\R^n$ with respect to the Euclidean norm. 

Since we need to deal with two sets of nonlinear constraints $h:\R^n \rightarrow \R^m$ and $g:\R^n \rightarrow \R^l$, we use $h_i :\R^n \rightarrow \R$ for $i \in [m]$ and $g_j:\R^n \rightarrow \R$ for $j \in [l]$ to denote individual components of $h$ and $g$, respectively. In case we need to consider $h$ and $g$ jointly, we index components of $g$ by $g_l$ for $l \in [m+1:m+l]: = \{m+1, \cdots, m+l\}$. Other notations will be explained upon their first appearance.

The rest of this paper is organized as follows. We provide some preliminaries in Section \ref{sec: preliminaries}, including definitions,  assumptions, and notations. In Section \ref{sec: convergence}, we present the convergence analysis of the proposed algorithm; for ease of reading, we put some technical proofs in the Appendix. The proposed algorithm is numerically validated in Section \ref{sec: experiments}, and we leave some final remarks as well as future directions in Section \ref{sec: conclusion}. 
\section{Preliminaries}\label{sec: preliminaries}
\subsection{Assumptions and Stationary Solutions}
We formally state assumptions on $(f, h, X)$ and define approximate stationarity for problem \eqref{eq: nlp}. 
\begin{assumption}[Basic assumptions]\label{assumption: basic assumption}
	We make the following basic assumptions.
	\begin{enumerate}
		\item The set $X$ is convex, compact, and described by \eqref{eq: X}. For each $j \in [l]$, the function $g_j:\R^n \rightarrow \R$ is convex with a Lipschitz gradient, i.e., there exists $L_{g_j} > 0$ such that for all $u,v\in \R^n$, $$\|\nabla g_j(u) - \nabla g_j(v)\| \leq L_{g_j}\|u-v\|. $$
		\item The function $f:\R^n \rightarrow \R$ is continuously differentiable over $X$, and there exists $L_f >0$ such that for all $u,v\in X$, $$\|\nabla f(u) - \nabla f(v)\|\leq L_f \|u-v\|.$$ 
		\item For each $i \in [m]$, the function $h_i: \R^n \rightarrow \R$ is convex and continuously differentiable over $X$, and there exists $L_{h_i}$ such that for all $u,v\in X$, $$\|\nabla h_i(u) - \nabla h_i(v) \| \leq L_{h_i}\|u - v\|. $$
	\end{enumerate}
\end{assumption}

By Assumption \ref{assumption: basic assumption}, the following constants are well-defined.  
\begin{subequations} \label{eq: problem constants}
\begin{align}
	& \nabla_f := \max_{x \in X} \|\nabla f(x)\|, \quad D_X := \max_{u, v \in X} \|u-v\|, \quad \underline{f} :=  \min_{x\in X} f(x), \\
	& L_h : = \sqrt{\sum_{i \in [m]} L_{h_i}^2},  \quad L_g : = \sqrt{\sum_{j \in [l]} L_{g_j}^2}, \quad M_h: = \max_{x\in X}\|h(x)\|, \\ 
    & K_{h_i} := \max_{x\in X} \|\nabla h_i(x)\|, ~\forall i \in [m], \quad K_h : = \sqrt{\sum_{i \in [m]} K_{h_i}^2}. 
\end{align}
\end{subequations}
Without loss of generality, we assume that the above constants are strictly positive; otherwise, the problem can be reduced to a much simpler one. It is easy to verify that for all $u,v \in X$, the following inequalities hold:
\begin{align}
	 |h_i(u) - h_i(v)| \leq & K_{h_i}\|u - v\|, ~\forall i \in [m], \\
	 \|h(u) - h(v)\| \leq & K_h \|u - v\|, \\
	 \|\nabla h(u) - \nabla  h(v) \| \leq & L_h \|u - v\|.
\end{align}

Our proposed method seeks for approximate stationary solutions of problem \eqref{eq: nlp} in the following sense.
\begin{definition}[Stationary point to nonlinear program \eqref{eq: nlp}]\label{def: stationary}
	Given $\epsilon \geq 0$, a vector $x \in X$ is an $\epsilon$-\textit{stationary point} of problem \eqref{eq: nlp} if there exist $(\xi, y) \in \R^n \times \R^m_+$ such that 
	\begin{subequations}
	\begin{align}
		& \xi \in \nabla f(x) + \nabla h(x) ^\top y + N_{X}(x),~ \|\xi\|\leq \epsilon, \label{eq: nlp stationary dual infeas}\\
		& \|\Pi_{\R^m_+}(h(x))\| \leq \epsilon,  \label{eq: nlp stationary primal infeas} \\
        & |\langle y,  h(x)\rangle| \leq \epsilon,  \label{eq: nlp stationary cs}
 	\end{align}
    \end{subequations}
	where $N_X(x)$ denotes the normal cone of $X$ at $x$. We simply call $x$ a stationary point when $\epsilon = 0$, and we denote the set of all stationary points by $X^*$.
\end{definition}
\begin{remark}
    Conditions \eqref{eq: nlp stationary dual infeas}, \eqref{eq: nlp stationary primal infeas}, and \eqref{eq: nlp stationary cs} correspond to natural relaxations of dual feasibility, primal feasibility, and complimentary slackness, respectively.
\end{remark}

In addition to Assumption \ref{assumption: basic assumption}, our analysis relies on certain regularity conditions as follows.  
\begin{definition}[Active set and Jacobian matrix]\label{def: active set}
	In view of problem \eqref{eq: nlp}, the active set of some $x \in X$ is defined as
	\begin{align*}
		A[x]: = \{i \in [m]:~ h_i(x) = 0 \} \cup \{j \in [m+1:m+l]:~g_j(x) = 0\},
	\end{align*}
	which is a subset of $[m+l]$. Further define the Jacobian of all constraints of \eqref{eq: nlp} as 
	\begin{align*}
		J(x):= [\nabla h_1(x), \cdots, \nabla h_m(x), \nabla g_{m+1}(x), \cdots, \nabla g_{m+l}(x)]^\top \in \R^{(m+l) \times n}
	\end{align*}
	We use $J_{A}(x)\in \R^{|A[x]| \times n}$ to denote the submatrix of $J(x)$ consisting of rows in $A[x]$.
\end{definition}

\begin{assumption}[Regularity]\label{assumption: convex regularity}
	We make the following regularity assumptions on the problem. 
	\begin{enumerate}
		\item There exist $\hat{x} \in X$ and $\Delta_0 > 0$ such that $h_i(\hat{x} ) \leq  -\Delta_0$ for all $i \in [m]$.
		\item For any $x^* \in X^*$ and any $\mathcal{S}_A \subseteq A[x^*]$, the smallest singular value of $J_{\mathcal{S}_A}(x^*)^\top$ is bounded away from zero, where $J_{\mathcal{S}_A}(x^*)$ denotes a submatrix of $J_A(x^*)$ consisting of rows specified in $\mathcal{S}$. Equivalently, we can define the following positive constant:
        \begin{align*}
            \sigma_{X^*} := \inf_{ \stackrel{x^* \in X^*}{\mathcal{S}_A \subseteq A[x^*]} } \sigma_{\min}(J_{\mathcal{S}_A}(x^*)^\top)  > 0. 
        \end{align*}
	\end{enumerate}
\end{assumption}
\begin{remark}
    We give some remarks regarding Assumption \ref{assumption: convex regularity}.
    \begin{enumerate}
        \item Notice that part 1 is a Slater-like commonly adopted for the analysis of convex programs. However, our assumption is slightly weaker in the sense that we do not require $\hat{x}$ to be strictly feasible with respect to $g(x) \leq 0$, i.e., $\hat{x}$ does not have to belong to the (relative) interior of $X$. 
        \item Part 2 requires all stationary point of problem \eqref{eq: nlp} should satisfy the LICQ, which is also a common assumption for the nonlinear programs. As we do not have prior knowledge on which stationary point $x^*$ the proposed method will converge to, we further assume a uniform lower bound $\sigma_{X^*}$ over the set of all stationary points $X^*$ in order to provide some qualitative control of $J_{S_A}(x^*)$. Intuitively speaking, we want to make sure that all rows in $J_{S_A}(x^*)$ are ``sufficiently linearly independent". In addition, we emphasize that this condition is only assumed on stationary points, rather than all feasible points, of \eqref{eq: nlp} 
    \end{enumerate}
\end{remark}

\subsection{Useful Notations}
We introduce some notations that will be helpful in the analysis. Recall that $(\alpha, \beta, c, p)$ are algorithmic parameters and $B$ is an upper bound of each component of $Y$. Their exact ranges will be specified later. We will frequently adopt the following notations in our analysis. 
\begin{itemize}
\item For some $y \in \R^m_+$, define convex functions
\begin{align}
    h_y(x): = \langle y, h(x)\rangle, \text{~and~} h_Y(x) := \max_{y \in Y}h_y(x).
\end{align}
For all $y\in Y$, the function $h_y(x)$ has a Lipschitz gradient with modulus bounded by

\begin{align}
    L_h(Y) : = L_h \max_{y\in Y}\|y\| = \sqrt{m}L_hB.
\end{align}
\item Choose $p > L_f$. For any $(z, y) \in X \times Y$, the function $K(\cdot ,z;y)$ is strongly convex with modulus 
\begin{align} \label{eq: p range}
	 \mu_K := p -L_f > 0. 
\end{align}
In addition, the following quantities are well-defined:
\begin{align}
    x(z) := & \arg\min_{x\in X} f(x) + h_Y(x) + \frac{p}{2}\|x - z\|^2 = \argmin_{x\in X} \big(\max_{y\in Y} K(x,z; y)\big), \\
    P(z): = & \min_{x\in X} f(x) + h_Y(x) + \frac{p}{2}\|x - z\|^2 = f(x(z)) + h_Y(x(z)) + \frac{p}{2}\|x(z) - z\|^2, \\
    x(y, z) := & \arg\min_{x\in X}K(x, z; y),  \\
    d(y, z) := & \min_{x \in X} K(x, z; y) = K(x(y,z), z; y), \\
    \nabla_y d(y, z) = &  \nabla_y K(x(y,z), z; y) = h(x(y,z)),\\
    Y(z) := & \Argmax_{y\in Y} d(y, z).
\end{align}
\item Finally, for some given $\alpha > 0 $ and $c> 0$,  define
\begin{align}
	y_+(z) := &\Pi_Y(y+\alpha h(x(y, z))),  \label{eq: y_+(z)}\\
	x_+(y, z) := &\Pi_X(x-c\nabla_xK(x, z; y)). \label{eq: x_+(y, z)}
\end{align}
These quantities are closely related to dual and primal iterates generated by the proposed algorithm, and their differences can be controlled by certain error bounds. Keep in mind that $y_+(z)$ and $x_+(y, z)$ are parameterized by $\alpha$ and $c$, respectively; we skip such dependencies in the above notations for ease of presentation. 
\end{itemize}
The following facts will be useful in our analysis. 
\begin{lemma}\label{lemma: minimax thm}
   For any $z \in \R^n$, it holds that 
    \begin{align}
        & P(z) = \min_{x\in X} \max_{y \in Y} K(x,z;y) = \max_{y \in Y} \min_{x\in X} K(x,z;y) = \max_{y \in Y} d(y, z), \label{eq: minimax1}\\
        & x(y(z), z) = x(z) \text{~for any~} y(z) \in Y(z). \label{eq: minimax2}
    \end{align}
\end{lemma}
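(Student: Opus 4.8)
\textbf{Proof proposal for Lemma \ref{lemma: minimax thm}.}

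The plan is to invoke a minimax theorem to swap the order of $\min_{x \in X}$ and $\max_{y \in Y}$ in the definition of $P(z)$. The function $K(x, z; y) = f(x) + \langle y, h(x)\rangle + \frac{p}{2}\|x-z\|^2$, viewed as a function of $(x, y)$ with $z$ fixed, is (strongly) convex in $x$ over the convex compact set $X$ for $p > L_f$, and is affine — hence concave — and continuous in $y$ over the convex compact set $Y = [0, B]^m$. Both $X$ and $Y$ are nonempty, convex, and compact. Therefore Sion's minimax theorem (or even the classical von Neumann theorem, since $Y$ is a box and the coupling term is bilinear) applies and yields $\min_{x\in X}\max_{y\in Y}K(x,z;y) = \max_{y\in Y}\min_{x\in X}K(x,z;y)$. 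The left-hand side equals $P(z)$ because $\max_{y \in Y} K(x, z; y) = f(x) + h_Y(x) + \frac{p}{2}\|x-z\|^2$ by the definition of $h_Y$, and the right-hand side equals $\max_{y\in Y} d(y,z)$ by the definition of $d(y,z) = \min_{x\in X}K(x,z;y)$. This establishes \eqref{eq: minimax1}.

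For \eqref{eq: minimax2}, let $y(z) \in Y(z)$ be any maximizer of $d(\cdot, z)$, and let $x(z)$ be the (unique, by strong convexity) minimizer defining $P(z)$. Since the saddle-value is attained on both sides, the pair $(x(z), y(z))$ is a saddle point of $K(\cdot, z; \cdot)$ on $X \times Y$: indeed, $K(x(z), z; y(z)) \le \max_{y\in Y} K(x(z), z; y) = P(z) = \max_{y\in Y} d(y,z) = d(y(z), z) = \min_{x\in X} K(x, z; y(z)) \le K(x(z), z; y(z))$, so all inequalities are equalities. In particular $x(z)$ minimizes $K(\cdot, z; y(z))$ over $X$, and since that minimizer is unique (strong convexity again, as $p > L_f$), it must coincide with $x(y(z), z)$; that is, $x(y(z), z) = x(z)$.

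I do not anticipate a serious obstacle here; this is a standard saddle-point argument. The only points requiring a little care are: (i) verifying the hypotheses of the minimax theorem — compactness of $X$ is given in Assumption \ref{assumption: basic assumption}, compactness of $Y$ is by construction, and continuity/convexity/concavity are immediate from the form of $K$; and (ii) the uniqueness of minimizers, which relies crucially on $p > L_f$ so that $\mu_K = p - L_f > 0$ as recorded in \eqref{eq: p range}. One should also note $x(z) \in X$ and $x(y,z) \in X$ so the arguments of $h$, $f$ stay within the domain where the Lipschitz-gradient assumptions hold, though this is not strictly needed for the minimax identity itself.
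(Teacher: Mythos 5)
Your proposal is correct and follows essentially the same route as the paper: the paper proves \eqref{eq: minimax1} by citing a minimax theorem (Theorem 3.1.29 of Nesterov's book), exactly as you do via Sion/von Neumann after checking convexity--concavity and compactness, and it disposes of \eqref{eq: minimax2} by citing Lemma B.8 of an external reference, whose content is precisely the saddle-point-plus-uniqueness argument you write out. The only difference is that you supply the standard details the paper delegates to citations, and your verification of the hypotheses (in particular that $\mu_K = p - L_f > 0$ gives uniqueness of the minimizer) is accurate.
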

\begin{proof}
    The second equality in \eqref{eq: minimax1} is due to the Minimax Theorem, e.g., see \cite[Theorem 3.1.29]{nesterov2018lectures}. The second claim \eqref{eq: minimax2} is proved in  \cite[Lemma B.8]{zhang2020single}. 
\end{proof}

\section{Convergence Analysis}\label{sec: convergence}

\subsection{A Potential Function and its Basic Descent Property}
For $t \in \N$, define 
\begin{align}
	\Phi^t = \Phi(x^t, z^t;y^t):= K(x^t, z^t;y^t)-2d(y^t, z^t)+2P(z^t).
\end{align}
The core of our analysis is to show that $\Phi$ can serve as a potential function, which generates a non-increasing sequence, stays bounded from below, and admits sufficient descents over iterations. The next lemma establishes a basic estimate of the difference between $\Phi^t$ and $\Phi^{t+1}$. 

\begin{lemma}\label{lemma: potential function}
    Suppose Assumption \ref{assumption: basic assumption} holds. Define constants 
	\begin{align}\label{eq: kappa123-lemma}
		\kappa_1 := \frac{p}{\mu_K}, ~ \kappa_2 := \frac{K_h}{\mu_K}, \text{~and~} \kappa_3:=	1 + \frac{1}{c \mu_K}. 
	\end{align}
    Suppose we select $p$ so that  \eqref{eq: p range} holds and $(c, \alpha, \beta)$ (in this order) such that
    \begin{align} 
    	 & 0 < c \leq \min \left\{\frac{1}{4K_h}, \frac{\kappa_1}{16 p \kappa_2^2}, \frac{1}{L_f + L_h(Y) + p} \right\}, \label{eq: c_range}\\
        & 0 < \alpha \leq \min \left\{ \frac{3}{4(K_h\kappa_3^2 -K_h \kappa_2 )},  \frac{1}{4cK_h^2 \kappa_3^2}\right\}, \label{eq: alpha_range} 
     \end{align}
    and $0 < \beta \leq \frac{1}{24\kappa_1}$. Then for all $t \in \N$, it holds that 
    \begin{align}
    \Phi^t - \Phi^{t+1} \geq & \frac{1}{8c} \|x^t - x^{t+1}\|^2 + \frac{1}{ 8 \alpha} \|y^t - y_+^t(z^t)\|^2  +\frac{p}{8\beta} \|z^t - z^{t+1}\|^2 - 24 p \beta \|x(z^t)-x(y^t_+(z^t), z^t)\|^2. \label{eq: basic descent inequality}
\end{align}
\end{lemma}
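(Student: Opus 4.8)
The plan is to decompose the potential difference $\Phi^t-\Phi^{t+1}$ along the three coordinate updates (primal $x$, dual $y$, proximal $z$) and bound each contribution separately, then recombine. Write
\[
\Phi^t-\Phi^{t+1} = \underbrace{\big(K(x^t,z^t;y^t)-K(x^{t+1},z^t;y^t)\big)}_{\text{(I): primal descent}} + \underbrace{\big(K(x^{t+1},z^t;y^t)-K(x^{t+1},z^t;y^{t+1})\big)}_{\text{(II): dual change}} + \underbrace{\big(K(x^{t+1},z^t;y^{t+1})-K(x^{t+1},z^{t+1};y^{t+1})\big)}_{\text{(III): prox change}} - 2\big(d(y^{t+1},z^{t+1})-d(y^t,z^t)\big) + 2\big(P(z^{t+1})-P(z^t)\big).
\]
For (I), I would use that $x\mapsto K(x,z^t;y^t)$ is $L$-smooth with $L = L_f+L_h(Y)+p$ and $\mu_K$-strongly convex, together with the projected-gradient step in line \ref{alg: primal}; the standard projected-gradient descent lemma (valid since $c\le 1/(L_f+L_h(Y)+p)$ by \eqref{eq: c_range}) gives a descent of order $\tfrac{1}{c}\|x^t-x^{t+1}\|^2$ up to constants. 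Note that since $x^{t+1}=x_+(y^t,z^t)$ in the notation of \eqref{eq: x_+(y, z)}, the term $\|x^t-x^{t+1}\|$ is exactly the primal movement appearing on the right-hand side. For (III), $K$ is quadratic in $z$ with $z\mapsto\tfrac{p}{2}\|x^{t+1}-z\|^2$, and the update $z^{t+1}=z^t+\beta(x^{t+1}-z^t)$ is an explicit convex combination, so (III) is an exact computation producing a term $\sim \tfrac{p}{\beta}\|z^t-z^{t+1}\|^2$ plus a cross term; here I expect to invoke the relation $\|x^{t+1}-z^t\| = \tfrac1\beta\|z^t-z^{t+1}\|$.

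The genuinely delicate parts are (II) and the $d$- and $P$-difference terms. For (II), since $K$ is affine in $y$, we have $K(x^{t+1},z^t;y^t)-K(x^{t+1},z^t;y^{t+1}) = \langle y^{t+1}-y^t, h(x^{t+1})\rangle$; the dual step is a projected gradient \emph{ascent} step on the concave function $d(\cdot,z^t)$, but evaluated at $x^{t+1}$ rather than at the exact maximizer $x(y^t,z^t)$, so I would introduce $y_+^t(z^t)=\Pi_Y(y^t+\alpha h(x(y^t,z^t)))$ from \eqref{eq: y_+(z)} as an intermediate, bound $\|y^{t+1}-y_+^t(z^t)\|$ via nonexpansiveness of $\Pi_Y$ by $\alpha\|h(x^{t+1})-h(x(y^t,z^t))\| \le \alpha K_h\|x^{t+1}-x(y^t,z^t)\|$, and then use an error bound of the type $\|x^{t+1}-x(y^t,z^t)\|\le \kappa_3\|x^t-x^{t+1}\|$ (this is where $\kappa_3 = 1+\tfrac1{c\mu_K}$ enters — strong convexity of $K(\cdot,z^t;y^t)$ turns the residual of one projected-gradient step into a bound on the distance to the minimizer). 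For the $d$-difference, I would split $d(y^{t+1},z^{t+1})-d(y^t,z^t) = [d(y^{t+1},z^{t+1})-d(y^{t+1},z^t)] + [d(y^{t+1},z^t)-d(y^t,z^t)]$: the first bracket is controlled by the Lipschitz dependence of $d$ on $z$ (via Danskin, $\nabla_z d(y,z) = p(z-x(y,z))$, giving a term $\sim p\|z^t-z^{t+1}\|\cdot(\text{diam})$, absorbed into the $z$-movement and the $\beta$-error term), while the second bracket, because $d(\cdot,z^t)$ is concave and $y^{t+1}$ is close to the ascent iterate, yields a \emph{positive} gain $\sim\tfrac1\alpha\|y^t-y_+^t(z^t)\|^2$ (with sign opposite to the descent of $K$, which is why the factor becomes $1/(8\alpha)$ after combining the $+2d$ in $\Phi$ with $-K$'s dual term). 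Similarly $P(z^{t+1})-P(z^t)$ is handled by $\nabla P(z) = p(z-x(z))$ (Lemma \ref{lemma: minimax thm} identifies $x(y(z),z)=x(z)$), contributing another $z$-movement term plus the cross term with $x(z^t)$ that, after Young's inequality, produces the $-24p\beta\|x(z^t)-x(y_+^t(z^t),z^t)\|^2$ remainder.

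The main obstacle I anticipate is the bookkeeping of cross terms: each of (I)–(III) and the $d,P$ differences spawns a cross term of the form $\langle \text{(one movement)}, \text{(another movement or }x(z^t)-x(\cdot)\text{)}\rangle$, and these must be split by Young's inequality so that (a) the quadratic movement terms $\|x^t-x^{t+1}\|^2$, $\|y^t-y_+^t(z^t)\|^2$, $\|z^t-z^{t+1}\|^2$ retain strictly positive coefficients $\tfrac1{8c},\tfrac1{8\alpha},\tfrac{p}{8\beta}$, and (b) the only surviving ``bad'' term is the single $-24p\beta\|x(z^t)-x(y_+^t(z^t),z^t)\|^2$. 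Getting the constants to line up is exactly what forces the specific nested parameter ranges \eqref{eq: c_range}–\eqref{eq: alpha_range} and $\beta\le 1/(24\kappa_1)$: $c$ small enough to dominate the smoothness constant and the $\kappa_2$-type error terms, then $\alpha$ small relative to $c$ (note $c$ appears in the $\alpha$-bound), then $\beta$ small relative to $\kappa_1=p/\mu_K$. I would organize the proof by first stating the three ``single-coordinate'' lemmas (primal, dual, prox descents with their error-bound inputs), possibly deferring their proofs to the appendix as the excerpt suggests, and then in the body do only the final linear combination and Young-inequality absorption that yields \eqref{eq: basic descent inequality}.
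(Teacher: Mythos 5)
Your plan follows essentially the same route as the paper: the paper also telescopes $K$ along the three updates (its Lemma \ref{lemma: primal descent} packages your (I)--(III) into a single primal-descent estimate), handles the $d$- and $P$-differences via Danskin-type arguments (Lemmas \ref{lemma: dual ascent} and \ref{lemma: proximal descent}), converts $\|y^t-y^{t+1}\|$ and $x(z^{t+1})-x(y^{t+1},z^{t+1})$ into the quantities appearing in \eqref{eq: basic descent inequality} via the error bounds \eqref{eb1}--\eqref{eb4}, and absorbs the cross terms by Young's inequality exactly as you describe. One correction: in your displayed decomposition the last two terms should read $+2\big(d(y^{t+1},z^{t+1})-d(y^t,z^t)\big)-2\big(P(z^{t+1})-P(z^t)\big)$ (and likewise (II) equals $\langle y^t-y^{t+1},h(x^{t+1})\rangle$); as written the signs are flipped, although your subsequent narrative --- dual \emph{ascent} producing a positive gain, $P$ contributing a cross term that is bounded above --- is consistent with the correct signs, so this appears to be a transcription slip rather than a conceptual error.
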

The proof of Lemma \ref{lemma: potential function} can be found in Appendix \ref{appendix: potential function}. 

\subsection{Dual Error Bounds}
Note that Lemma \ref{lemma: potential function} does not show the sequence $\{\Phi^t\}_{t\in \N}$ is non-increasing, as there is a negative term $- 24 p \beta \|x(z^t)-x(y^t_+(z^t), z^t)\|^2$ in the right-hand side of \eqref{eq: basic descent inequality}. Our strategy, roughly speaking, is to show that the three positive terms $ \frac{1}{8c} \|x^t - x^{t+1}\|^2 + \frac{1}{ 8 \alpha} \|y^t - y_+^t(z^t)\|^2  +\frac{p}{8\beta} \|z^t - z^{t+1}\|^2$ is able to compensate for the negative term. To this end, we state two \textit{dual error bounds} in this subsection. We first adopt a weak dual error bound from \cite{zhang2020single}. 
\begin{lemma} [Lemma D.1 of \cite{zhang2020single}]\label{lemma: weak eb} 
	Suppose Assumption \ref{assumption: basic assumption} holds. Define 
	\begin{align}\label{eq: kappa4}
		\kappa_4:= \frac{\sqrt{m} B (1 +  \alpha (L_f + L_h(Y))(1+\kappa_2))}{\alpha \mu_K}.
	\end{align}
	Then for all $z\in X$ and $y \in Y$, it holds that 
	\begin{align}
		\|x(z) - x(y_+(z),z)\|^2 \leq \kappa_4 \| y - y_+(z)\|. \notag 
	\end{align}
\end{lemma}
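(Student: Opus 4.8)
The goal is to prove Lemma \ref{lemma: weak eb}, a weak dual error bound stating that $\|x(z) - x(y_+(z),z)\|^2 \leq \kappa_4 \|y - y_+(z)\|$ for all $z \in X$ and $y \in Y$, where $\kappa_4$ is given by \eqref{eq: kappa4}.

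\textbf{Overall approach.} The plan is to bound $\|x(z) - x(y_+(z),z)\|$ by first comparing both quantities to the intermediate point $x(y,z)$, and then exploiting the strong convexity of $K(\cdot, z; y)$ together with Lipschitz continuity of the gradient map in the dual variable. First I would use the triangle inequality on the difference $x(z) - x(y_+(z), z)$, but a cleaner route is to directly estimate each piece via the $\mu_K$-strong convexity of $K(\cdot,z;y)$. Recall from Lemma \ref{lemma: minimax thm} that $x(z) = x(y(z), z)$ for any maximizer $y(z) \in Y(z)$, so $x(z)$ is itself a minimizer of $K(\cdot, z; y(z))$ over $X$.

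\textbf{Key steps.} The first step is a standard perturbation estimate: for any $y, y' \in Y$ and fixed $z$, the strong convexity of $x \mapsto K(x,z;y)$ with modulus $\mu_K$ gives
\begin{align}
\|x(y,z) - x(y',z)\| \leq \frac{1}{\mu_K}\|\nabla_x h_{y}(x(y',z)) - \nabla_x h_{y'}(x(y',z))\| = \frac{1}{\mu_K}\|\nabla h(x(y',z))^\top (y - y')\| \leq \frac{K_h}{\mu_K}\|y - y'\|,
\end{align}
using that $K(\cdot,z;y) - K(\cdot,z;y')$ differs by the linear term $\langle y - y', h(\cdot)\rangle$ whose gradient has norm at most $K_h\|y-y'\|$ on $X$; this is essentially the content of the constant $\kappa_2 = K_h/\mu_K$. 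The second step is to control $\|y - y_+(z)\|$ in terms of $\|y - y(z)\|$ for a suitable maximizer $y(z) \in Y(z)$, or vice versa: here I would invoke that $y_+(z) = \Pi_Y(y + \alpha h(x(y,z)))$ is one step of projected gradient ascent on the concave function $d(\cdot, z)$, and relate the progress of this step to the distance to the solution set $Y(z)$. The third step is to assemble the bound: writing $x(z) - x(y_+(z),z) = \big(x(y(z),z) - x(y,z)\big) + \big(x(y,z) - x(y_+(z),z)\big)$ and applying the step-one estimate to each term gives $\|x(z) - x(y_+(z),z)\| \leq \kappa_2(\|y(z) - y\| + \|y - y_+(z)\|)$, after which I need to bound $\|y(z) - y\|$ by a multiple of $\sqrt{\|y - y_+(z)\|}$ (note the square root — this is why $\kappa_4$ multiplies $\|y - y_+(z)\|$, not its square, and why the factor $\sqrt{m}B$ and the gradient-Lipschitz constants appear). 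The appearance of $D_Y \le \sqrt m B$ (the diameter of $Y$) suggests the crude bound $\|y(z)-y\| \le \sqrt m B$ is used, combined with a lower bound on $\|y - y_+(z)\|$ coming from the optimality gap of the dual problem.

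\textbf{Main obstacle.} The delicate part is the third step: passing from a quadratic-looking quantity $\|x(z) - x(y_+(z),z)\|^2$ to something linear in $\|y - y_+(z)\|$. The mechanism must be that when $\|y - y_+(z)\|$ is small, the projected-gradient residual $\frac{1}{\alpha}\|y - y_+(z)\|$ is small, which forces $y$ to be near-stationary for $d(\cdot,z)$ on $Y$, hence $x(y,z)$ near $x(z)$; but the quantitative rate of this is only $O(\sqrt{\|y - y_+(z)\|})$ in general (without a strong-concavity or error-bound condition on $d(\cdot, z)$ in $y$, which we do not have since $h$ need not be strongly convex). I expect the proof in \cite{zhang2020single} establishes: $\|x(z) - x(y,z)\|^2 \leq$ (dual optimality gap $d(y(z),z) - d(y,z)$) $\times$ constant, via strong convexity of $K$; and then bounds the dual gap by $\langle \text{gradient}, y(z) - y\rangle \leq \|\nabla_y d(y,z)\| \cdot \|y(z) - y\| \le \frac{1}{\alpha}\|y - y_+(z)\|\cdot \sqrt m B$ using the projection characterization $y_+(z) - y - \alpha h(x(y,z)) \perp$ (pointing out of $Y$), plus a term accounting for the Lipschitz modulus $L_f + L_h(Y)$ of $\nabla_x K$ entering through how $x(y,z)$ moves. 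Tracking these constants yields exactly the $\kappa_4$ in \eqref{eq: kappa4}. Since the lemma is quoted verbatim from \cite[Lemma D.1]{zhang2020single}, the cleanest presentation is to cite it directly; but the reconstruction above via (i) strong convexity of $K$ in $x$, (ii) the projection/optimality characterization of $y_+(z)$, and (iii) the diameter bound on $Y$ is the route I would take to verify it independently.
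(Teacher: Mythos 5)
The paper offers no proof of this lemma: it is imported verbatim as Lemma D.1 of \cite{zhang2020single}, so citing it directly, as you ultimately suggest, matches what the authors do. Your reconstruction in the final paragraph does identify the correct mechanism: since $P(z) = \max_{y'\in Y} K(x(z),z;y') \geq K(x(z),z;y')$ for each fixed $y'\in Y$, the $\mu_K$-strong convexity of $K(\cdot,z;y')$ about its minimizer $x(y',z)$ gives $\frac{\mu_K}{2}\|x(z)-x(y',z)\|^2 \leq P(z) - d(y',z)$; taking $y'=y_+(z)$, the dual optimality gap is then controlled by concavity of $d(\cdot,z)$, the projection inequality $\langle y + \alpha h(x(y,z)) - y_+(z),\, y(z)-y_+(z)\rangle \leq 0$, and the diameter bound $\|y(z)-y_+(z)\| \leq \sqrt{m}B$, which produces a bound of the form $\frac{\sqrt{m}B}{\alpha\mu_K}\bigl(1+\alpha\cdot(\text{Lipschitz correction})\bigr)\|y-y_+(z)\|$ --- exactly the shape of $\kappa_4$.

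However, the route you lay out under ``Key steps'' is a dead end and should be discarded. The decomposition $\|x(z)-x(y_+(z),z)\| \leq \kappa_2\bigl(\|y(z)-y\| + \|y-y_+(z)\|\bigr)$ requires a bound on $\|y(z)-y\|$ that decays with $\|y-y_+(z)\|$, and no such bound is available: without strong concavity of $d(\cdot,z)$ (which fails here, since $d$ is merely concave in $y$), the distance from $y$ to the dual solution set $Y(z)$ is not controlled by the prox-gradient residual, and the crude estimate $\|y(z)-y\|\leq\sqrt{m}B$ only yields a constant right-hand side rather than one vanishing with $\|y-y_+(z)\|$. The viable argument --- the one you sketch at the end --- bounds the primal distance by the dual \emph{value} gap $P(z)-d(y_+(z),z)$, which concavity does control linearly in the residual, rather than by the dual \emph{iterate} gap $\|y(z)-y\|$. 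If you intend to present a self-contained proof rather than a citation, you should write up only that second route and verify that the resulting constant matches \eqref{eq: kappa4}.
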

The above error bound is ``weak" as it is inhomogeneous. In the next proposition, we show that if we choose $B$ to be sufficiently large, then the following ``strong" dual error bound holds. 

\begin{proposition}\label{prop: strong eb}
Suppose Assumptions \ref{assumption: basic assumption} and \ref{assumption: convex regularity} hold, and choose $B$ such that 
\begin{align}\label{eq: b_rule0}
        B >  \max \left \{ \frac{4\nabla_f D_X + 2 pD_X^2}{\Delta_0},  \frac{2\nabla_f + 2pD_X}{\sigma_{X^*}} \right\}. 
\end{align}
There exists constant $\delta > 0$ such that if $\max\{\|y^t-y_+^t(z^t)\|, \|z^t-x(z^t)\|\}\le \delta$, then we have
\begin{align} \notag 
    \|x(y^{t+1},z^t)-x(z^t)\|\le \kappa_5\|y^t-y_+^t(z^t)\|
\end{align}
 where 
 \begin{align} \label{eq: kappa_5}
    \kappa_5: =  \frac{2\left(L_f + p + (2\nabla_f + 2pD_X) \sigma_{X^*}^{-1}\sqrt{L_h^2 + L_g^2} \right) \left(\frac{1}{\alpha} + K_h \kappa_2 \right) }{\mu_K \sigma_{X^*}}. 
 \end{align}
\end{proposition}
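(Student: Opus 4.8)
The plan is to establish the strong dual error bound in three stages: first pin down the Lagrange multipliers associated with the minimax point $x(z^t)$ and argue they are uniformly bounded, then use the full-rank (LICQ) condition on active constraints to convert a first-order optimality residual into a bound on $\|x(y^{t+1},z^t) - x(z^t)\|$, and finally show that the residual itself is controlled by $\|y^t - y_+^t(z^t)\|$ when $\|y^t - y_+^t(z^t)\|$ and $\|z^t - x(z^t)\|$ are small enough.

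\textbf{Step 1 (multiplier bounds at $x(z^t)$).} By Lemma \ref{lemma: minimax thm}, $x(z^t) = x(y(z^t), z^t)$ for any $y(z^t) \in Y(z^t)$, so $x(z^t)$ minimizes $K(\cdot, z^t; y(z^t))$ over $X$. Writing the KKT conditions for this strongly convex problem, there exist multipliers $\lambda \ge 0$ for the constraints $g(x) \le 0$ supported on $A[x(z^t)] \cap [m+1:m+l]$, together with the components of $y(z^t)$ supported on the active $h$-constraints, such that $\nabla f(x(z^t)) + \nabla h(x(z^t))^\top y(z^t) + \nabla g(x(z^t))^\top \lambda + p(x(z^t) - z^t) = 0$. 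I would use the Slater point $\hat x$ from Assumption \ref{assumption: convex regularity}(1) together with the gradient/diameter bounds in \eqref{eq: problem constants} in a standard argument (test the optimality inequality at $\hat x$, exploit $h_i(\hat x) \le -\Delta_0$) to show $\sum_i [y(z^t)]_i \le (4\nabla_f D_X + 2pD_X^2)/\Delta_0 < B$, so that in fact $y(z^t)$ lies in the interior of $Y$ in the relevant coordinates and $h_Y$ at $x(z^t)$ is attained at $y(z^t)$; similarly the LICQ constant $\sigma_{X^*}$ and the choice $B > (2\nabla_f + 2pD_X)/\sigma_{X^*}$ in \eqref{eq: b_rule0} bound $\lambda$ (and again $y(z^t)$) by solving the stationarity equation restricted to the active Jacobian $J_{A}(x(z^t))^\top$, whose smallest singular value is $\ge \sigma_{X^*}$ by Assumption \ref{assumption: convex regularity}(2) — this is where $x(z^t)$ being (close to) a stationary point of \eqref{eq: nlp} matters, which holds because $\|z^t - x(z^t)\| \le \delta$ forces $x(z^t)$ near $X^*$.

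\textbf{Step 2 (from optimality residual to primal distance).} Both $x(z^t)$ and $x(y^{t+1}, z^t)$ are minimizers over $X$ of strongly convex functions — $K(\cdot, z^t; y(z^t))$ and $K(\cdot, z^t; y^{t+1})$ respectively — differing only in the linear term $\langle \nabla h(\cdot)^\top(\cdot), \cdot\rangle$. By $\mu_K$-strong convexity of $K(\cdot, z^t; y^{t+1})$, we get $\mu_K \|x(y^{t+1},z^t) - x(z^t)\|^2 \le \langle \nabla_x K(x(z^t), z^t; y^{t+1}), x(z^t) - x(y^{t+1},z^t)\rangle + \langle -\nabla_x K(x(y^{t+1},z^t),z^t;y^{t+1}), \cdot\rangle$; combining with the variational inequality defining $x(z^t)$ (against $x(y^{t+1},z^t) \in X$), the terms reorganize so that $\mu_K \|x(y^{t+1},z^t) - x(z^t)\|^2 \le \langle \nabla h(x(z^t))^\top (y(z^t) - y^{t+1}), x(y^{t+1},z^t) - x(z^t)\rangle$ plus lower-order Lipschitz correction terms. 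The active-set / LICQ structure, together with the fact that the multiplier supports stay on $A[x(z^t)]$ (only active $h_i$ contribute nontrivially, by the bound in Step 1 and complementarity), is what lets me replace $\|\nabla h(x(z^t))^\top(y(z^t)-y^{t+1})\|$ by something proportional to the residual; the factor $\sqrt{L_h^2 + L_g^2}$ and $\sigma_{X^*}^{-1}$ in \eqref{eq: kappa_5} come precisely from inverting $J_A^\top$ and from Lipschitz-continuity of the Jacobians moving from $x(z^t)$ to nearby points.

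\textbf{Step 3 (residual controlled by the dual movement).} It remains to bound the relevant multiplier/residual quantity by $\|y^t - y_+^t(z^t)\|$. Here I would relate $y^{t+1}$ and $y_+^t(z^t) = \Pi_Y(y^t + \alpha h(x(y^t,z^t)))$: both are projections onto $Y$ of $y^t$ plus $\alpha$ times an $h$-value ($h(x^{t+1})$ vs. $h(x(y^t,z^t))$), and the difference $\|x^{t+1} - x(y^t,z^t)\|$ is itself controllable. More directly, since $x(z^t) = x(y(z^t),z^t)$ and $Y(z^t) = \Argmax_y d(y,z^t)$, optimality of $y(z^t)$ gives $\langle h(x(z^t)), y - y(z^t)\rangle \le 0$ for all $y \in Y$, i.e. $y(z^t) = \Pi_Y(y(z^t) + \alpha h(x(z^t)))$; comparing this fixed-point relation with $y_+^t(z^t) = \Pi_Y(y^t + \alpha h(x(y^t,z^t)))$ and using nonexpansiveness of $\Pi_Y$, Lipschitzness of $h$, and the weak error bound of Lemma \ref{lemma: weak eb} to absorb $\|x(y^t,z^t) - x(z^t)\|$, I can show $\|y^{t+1} - y(z^t)\|$ (or the needed projection of $y(z^t) - y^{t+1}$ onto the active directions) is $\le (\frac1\alpha + K_h\kappa_2)\|y^t - y_+^t(z^t)\|$ up to the constant already present, provided $\delta$ is chosen small enough that all "nearby" estimates (the multiplier bounds of Step 1, the active set of $x(z^t)$ agreeing with the limiting active set, and the smallness regime of Lemma \ref{lemma: weak eb}) are in force. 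Chaining Steps 2 and 3 yields $\|x(y^{t+1},z^t) - x(z^t)\| \le \kappa_5 \|y^t - y_+^t(z^t)\|$ with $\kappa_5$ as in \eqref{eq: kappa_5}.

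\textbf{Main obstacle.} The crux is Step 2 combined with the uniform control needed in Step 1: turning the LICQ assumption — which a priori only holds pointwise on $X^*$ with a uniform constant $\sigma_{X^*}$ — into a bound valid at the non-stationary point $x(z^t)$ and, more delicately, ensuring that the active set $A[x(z^t)]$ does not "jump" relative to the active set of the nearby stationary point, so that the inverse-Jacobian estimates genuinely apply. Making the choice of $\delta$ quantitative enough to guarantee both the active-set stability and the validity of the small-residual regime, while keeping the final constant exactly $\kappa_5$, is the technically heaviest part; everything else is strong convexity bookkeeping and nonexpansiveness of projections.
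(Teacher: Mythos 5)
Your proposal correctly assembles several of the right ingredients (the Slater-point argument bounding the multipliers, the role of $\sigma_{X^*}$ and active-set stability, the factor $\tfrac{1}{\alpha}+K_h\kappa_2$ coming from translating dual movement into $h$-values), but the step that carries the entire load — Step 3 — asserts a bound that is false in general and cannot be obtained by the tools you cite. You claim that $\|y^{t+1}-y(z^t)\|$ (or its projection onto active directions) can be bounded by $\bigl(\tfrac{1}{\alpha}+K_h\kappa_2\bigr)\|y^t-y_+^t(z^t)\|$ using nonexpansiveness of $\Pi_Y$, the fixed-point relation $y(z^t)=\Pi_Y(y(z^t)+\alpha h(x(z^t)))$, and Lemma \ref{lemma: weak eb}. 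This is a \emph{dual error bound}: it would say that the distance from $y^t$ to the dual solution set $Y(z^t)$ is controlled by the norm of one projected dual-ascent step. Since $d(\cdot,z)$ is merely concave (not strongly concave) and $Y(z)$ need not be a singleton, no such linear bound holds; nonexpansiveness only yields inequalities of the form $\|y_+^t(z^t)-y(z^t)\|\le\|y^t-y(z^t)\|+\alpha\|h(x(y^t,z^t))-h(x(z^t))\|$, which go in the wrong direction, and Lemma \ref{lemma: weak eb} is inhomogeneous (a square on the left against a first power on the right) and cannot be bootstrapped into the linear estimate you need. Consequently Step 2, which reduces everything to the inner product $\langle h(x(z^t))-h(x(y^{t+1},z^t)),\,y^{t+1}-y(z^t)\rangle$ via strong convexity (as in the proof of \eqref{eb2}), is left holding an uncontrolled factor $y^{t+1}-y(z^t)$; trying to eliminate it through the two stationarity systems instead introduces a difference of normal-cone elements that also cannot be bounded.

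The missing idea is the paper's reduction to a \emph{perturbation bound}: one shows (Lemma \ref{lemma: perturbation r}) that $x(y_+^t(z^t),z^t)$ is exactly the primal solution $x^*(\tilde r,z^t)$ of the problem with perturbed constraints $h(x)\le \tilde r$, where $\tilde r$ in \eqref{eq: perturbation_r} satisfies $\|\tilde r\|\le(\tfrac{1}{\alpha}+K_h\kappa_2)\|y^t-y_+^t(z^t)\|$, and then proves that the \emph{primal} solution map $r\mapsto x^*(r,z)$ is Lipschitz (Lemma \ref{lemma: local eb}, Proposition \ref{prop: global perturbation eb}). The Lipschitz constant is obtained by bounding the change of a \emph{specifically selected} multiplier pair along a path $\theta\mapsto\theta\tilde r$ decomposed into finitely many segments on which a common ``basic set'' persists, with LICQ inverting the active Jacobian only between two solutions sharing that basic set (Lemma \ref{label: control dual by primal}). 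This circumvents any need to show that $y^{t+1}$ is close to $Y(z^t)$ — which is precisely the statement your Step 3 would require and which fails. The active-set stability you flag as the ``main obstacle'' is indeed handled in the paper, but it is in service of this homotopy construction, not of a direct dual-distance estimate; without the perturbation framework your Steps 2--3 do not close.
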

The proof of Proposition \ref{prop: strong eb} is rather technical. We provide details in Appendix \ref{sec: proof of strong eb}.

\subsection{Convergence and Iteration Complexity}
Combining Lemma \ref{lemma: potential function} and the dual error bounds from the previous subsection, we can show that the sequence $\{\Phi^t\}_{t \in \N}$ is indeed non-increasing.
\begin{lemma}\label{lemma: potential function descent}
    Suppose Assumptions \ref{assumption: basic assumption} and \ref{assumption: convex regularity} hold. Recall $(\kappa_1, \kappa_2, \kappa_3, \kappa_4, \kappa_5)$ from \eqref{eq: kappa123-lemma}, \eqref{eq: kappa4}, and \eqref{eq: kappa_5}, and $\delta > 0$ specified in Proposition \ref{prop: strong eb}. Suppose we choose  $c$ according to \eqref{eq: c_range},  $\alpha$ according to \eqref{eq: alpha_range}, $\beta$ such that 
    \begin{align}\label{eq: beta_range2}
        0 < \beta \leq \min \Bigg\{ & 
        \frac{1}{24\kappa_1},  
        \frac{1}{384\alpha p \kappa_5^2}, 
        \frac{\delta^2}{384D_X^2 p \alpha}, 
        \frac{\delta^2}{6144\kappa_2^2 D_X^2 p \alpha} , \notag \\
        & \frac{\delta^2 }{6144\kappa_3^2 D_X^2pc},  
        \frac{\delta^4}{(3080\kappa_4)^2 \times 384 D_X^2 p\alpha} \Bigg\}.
    \end{align}
    and $B$ according to \eqref{eq: b_rule0}. Then for $t \in \N$, we have 
    \begin{align}\label{eq: potential function descent}
    	\Phi^t-\Phi^{t+1}&\ge\frac{1}{16c}\|x^t-x^{t+1}\|^2+\frac{1}{16\alpha}\|y^t-y^t_+(z^t)\|^2+\frac{p}{16\beta}\|z^t-z^{t+1}\|^2. 
    \end{align}
\end{lemma}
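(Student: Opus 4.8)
\textbf{Proof proposal for Lemma \ref{lemma: potential function descent}.}

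The plan is to start from the basic descent inequality \eqref{eq: basic descent inequality} of Lemma \ref{lemma: potential function} and show that the single negative term $-24p\beta\|x(z^t)-x(y^t_+(z^t),z^t)\|^2$ can be absorbed into (half of) the three positive quadratic terms, leaving \eqref{eq: potential function descent} with the constants halved. The key is to bound $\|x(z^t)-x(y^t_+(z^t),z^t)\|^2$ in terms of $\|x^t-x^{t+1}\|^2$, $\|y^t-y^t_+(z^t)\|^2$, and $\|z^t-z^{t+1}\|^2$. First I would split into two cases according to whether the smallness hypothesis of Proposition \ref{prop: strong eb} holds at iteration $t$, i.e. whether $\max\{\|y^t-y^t_+(z^t)\|,\|z^t-x(z^t)\|\}\le\delta$.

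In the \emph{good case} where both quantities are $\le\delta$, I would invoke Proposition \ref{prop: strong eb} to get $\|x(y^{t+1},z^t)-x(z^t)\|\le\kappa_5\|y^t-y^t_+(z^t)\|$. Since $x(y^t_+(z^t),z^t)$ and $x(y^{t+1},z^t)$ should be close (the algorithm's dual iterate $y^{t+1}$ is essentially $y^t_+(z^t)$ up to an error controlled by $\|x^t-x^{t+1}\|$ via the Lipschitz dependence of $x(\cdot,z)$ on $y$ with modulus $\kappa_2$, cf.\ the definitions in Section 2), a triangle inequality converts this into a bound of the form $\|x(z^t)-x(y^t_+(z^t),z^t)\|^2\lesssim \kappa_5^2\|y^t-y^t_+(z^t)\|^2 + \kappa_2^2\|x^t-x^{t+1}\|^2$. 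Multiplying by $24p\beta$ and using the first two bounds on $\beta$ in \eqref{eq: beta_range2} (namely $\beta\le 1/(384\alpha p\kappa_5^2)$ and the $\kappa_2$-type bound) makes $24p\beta$ times this quantity at most $\tfrac{1}{16\alpha}\|y^t-y^t_+(z^t)\|^2+\tfrac{1}{16c}\|x^t-x^{t+1}\|^2$, so subtracting it from the $\tfrac18$-coefficient terms in \eqref{eq: basic descent inequality} leaves the $\tfrac{1}{16}$-coefficient terms claimed.

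In the \emph{bad case} where $\max\{\|y^t-y^t_+(z^t)\|,\|z^t-x(z^t)\|\}>\delta$, the strong error bound is unavailable, so instead I would use the weak error bound (Lemma \ref{lemma: weak eb}), $\|x(z^t)-x(y^t_+(z^t),z^t)\|^2\le\kappa_4\|y^t-y^t_+(z^t)\|$, together with the crude bound $\|x(z^t)-x(y^t_+(z^t),z^t)\|\le D_X$, to write $\|x(z^t)-x(y^t_+(z^t),z^t)\|^2\le\min\{\kappa_4\|y^t-y^t_+(z^t)\|,\ D_X^2\}$. The point is that in this case one of $\|y^t-y^t_+(z^t)\|$ or $\|z^t-x(z^t)\|$ is at least $\delta$; if it is $\|y^t-y^t_+(z^t)\|$ then $24p\beta D_X^2\le \tfrac{1}{16\alpha}\|y^t-y^t_+(z^t)\|^2$ follows from $\beta\le\delta^2/(384D_X^2p\alpha)$. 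If instead $\|z^t-x(z^t)\|\ge\delta$, I need to relate this to $\|z^t-z^{t+1}\|=\beta\|x^{t+1}-z^t\|$ and to $\|x^t-x^{t+1}\|$ — here one expands $\|z^t-x(z^t)\|$ via the triangle inequality against $x^{t+1}$ and $x^t$, controlling $\|x^{t+1}-x(z^t)\|$ by the primal error bound / weak error bound machinery (this is where $\kappa_3$ and $\kappa_4$ enter), so that $\delta^2\le \|z^t-x(z^t)\|^2$ gets distributed (up to constants) among $\|x^t-x^{t+1}\|^2$, $\|z^t-z^{t+1}\|^2/\beta^2$, and $\kappa_4\|y^t-y^t_+(z^t)\|$; the remaining $\beta$-bounds in \eqref{eq: beta_range2} (the $\kappa_3$, $\kappa_2$, and $\kappa_4$ ones, with their $D_X^2$ and fourth-power-of-$\delta$ factors) are exactly calibrated so that $24p\beta$ times this is dominated by the positive terms.

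The main obstacle I anticipate is the bad-case sub-analysis when $\|z^t-x(z^t)\|\ge\delta$: unlike the good case, there is no clean single error bound, and one must carefully chain $z^t\to x^{t+1}\to x(z^t)$ (or $\to x(y^t_+(z^t),z^t)\to x(z^t)$) and decide which intermediate distances are controlled by which iterate-difference, keeping track of the $\kappa_2,\kappa_3,\kappa_4$ factors so that the six upper bounds on $\beta$ in \eqref{eq: beta_range2} come out in the stated form. The good case is essentially a two-line consequence of Proposition \ref{prop: strong eb}; the bookkeeping of constants (factors of $24$, $384$, $6144$, $3080$) in the bad case is the delicate part, and I would expect these to be relegated to an appendix as the paper indicates. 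Throughout, I would also need the elementary facts that $x(\cdot,z)$ is $\kappa_2$-Lipschitz in $y$ and $\kappa_1$-Lipschitz (suitably) in $z$, and that $y^{t+1}=y^t_+(z^t)$ up to the discrepancy caused by $x^{t+1}\ne x(y^t,z^t)$, all of which are either stated in Section 2 or follow from strong convexity of $K(\cdot,z;y)$ with modulus $\mu_K$.
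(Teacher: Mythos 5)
Your overall strategy (absorb the negative term of \eqref{eq: basic descent inequality} into halves of the positive terms, using the strong error bound when the iterates are ``close'' and cruder bounds otherwise) matches the paper's, and your good case and the sub-case $\|y^t-y_+^t(z^t)\|>\delta$ go through essentially as you describe. However, there is a genuine gap in the remaining bad sub-case, where $\|z^t-x(z^t)\|>\delta$ and the dominant contribution in your triangle-inequality decomposition of $\|z^t-x(z^t)\|^2$ is $\|z^t-x^{t+1}\|^2$. The only positive term you can extract there is $\frac{p}{16\beta}\|z^t-z^{t+1}\|^2=\frac{p\beta}{16}\|z^t-x^{t+1}\|^2$, which scales \emph{linearly in $\beta$}, exactly like the negative term $24p\beta\|x(z^t)-x(y^t_+(z^t),z^t)\|^2$. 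Shrinking $\beta$ therefore cannot help: bounding the negative term crudely by $24p\beta D_X^2$ and the positive term from below by $\frac{p\beta}{16}\cdot\Theta(\delta^2)$ would require $\delta\gtrsim D_X$, which is false since $\delta$ is a small constant from Proposition \ref{prop: strong eb} while $\|z^t-x(z^t)\|\le D_X$ always. So the claim that ``the remaining $\beta$-bounds in \eqref{eq: beta_range2} are exactly calibrated so that $24p\beta$ times this is dominated by the positive terms'' fails on this branch.

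The paper sidesteps this by choosing a different case split: it conditions on three checkable inequalities, the third being $\|z^t-x^{t+1}\|^2\le 384\,\|x(z^t)-x(y^t_+(z^t),z^t)\|^2$ (a comparison of $\|z^t-x^{t+1}\|^2$ against the negative term itself, \emph{not} against $D_X^2$ or $\delta^2$). When this inequality is violated, the extra $\frac{p\beta}{16}\|z^t-x^{t+1}\|^2$ dominates $24p\beta\|x(z^t)-x(y^t_+(z^t),z^t)\|^2$ for every $\beta>0$, with no tuning needed; when all three conditions hold, the hypothesis $\max\{\|y^t-y_+^t(z^t)\|,\|z^t-x(z^t)\|\}\le\delta$ of Proposition \ref{prop: strong eb} is \emph{derived} (this is where the $\kappa_2$, $\kappa_3$, $\kappa_4$ bounds on $\beta$ are actually spent), and the strong error bound together with $\beta\le 1/(384\alpha p\kappa_5^2)$ finishes the argument. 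Your proof can be repaired by inserting this additional comparison as a further sub-case of your bad case, but as written the argument does not close.
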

\begin{proof}
 Consider the following three conditions
    \begin{align}
        \|x^t - x^{t+1}\|^2 \leq &   384D_X^2 p c \beta, \label{eq: condition1}\\
        \|y^t - y_+^t(z^t)\|^2 \leq &  384D_X^2 p \alpha \beta, \label{eq: condition2}\\
        \|z^t - x^{t+1}\|^2 \leq & 384 \|x(z^t) - x(y^t_+(z^t), z^t)\|^2. \label{eq: condition3}
    \end{align}
    We consider two cases next. 
    \begin{enumerate}
        \item Conditions \eqref{eq: condition1}-\eqref{eq: condition3} hold. Due to the choice of $\beta$ in \eqref{eq: beta_range2} and the fact that $\kappa_3 >1$, we have 
        \begin{align}
            & \|x^t - x^{t+1}\| \leq  \sqrt{384D_X^2 pc \beta} \leq \sqrt{\frac{384 D_X^2 pc \delta^2}{ 6144 \kappa_3^2 D_X^2 pc} } \leq \delta, \notag \\
            & \|y^t - y_+^t(z^t)\| \leq \sqrt{384D_X^2 p \alpha \beta} \leq \delta. \notag
        \end{align}
        In addition, it holds that
        \begin{align*}
            & \|z^t - x(z^t)\|^2 \\
            \leq  & 4 \|z^t - x^{t+1}\|^2 + 4\|x^{t+1} - x(y^t, z^t) \|^2 \\
            &+4\|x(y^t,z^t) - x(y^t_+(z^t),z^t)\|^2 + 4\|x(y^t_+(z^t), z^t) - x(z^t)\|^2 \\
            \leq & 1540 \|x(y^t_+(z^t), z^t) - x(z^t)\|^2 + 4 \kappa_3^2 \|x^t - x^{t+1}\|^2 + 4 \kappa_2^2 \|y^t - y^t_+(z^t)\|^2 \\
            \leq & 1540 \kappa_4 \sqrt{384D_X^2 p \alpha \beta } + 1536 \kappa_3^2 D_X^2 p c \beta + 1536 \kappa_2^2 D_X^2 p \alpha \beta \\
            \leq & \frac{\delta^2}{2} + \frac{\delta^2}{4} + \frac{\delta^2}{4} = \delta^2, 
        \end{align*}
        where the second inequality is due to \eqref{eq: condition3}, \eqref{eb2}, \eqref{eb3}, the third inequality is due to Lemma \ref{lemma: weak eb}, \eqref{eq: condition1} and \eqref{eq: condition2}, and the last inequality is due to the choice of $\beta$ in \eqref{eq: beta_range2}. Now we have 
        \begin{align*}
			&\frac{1}{8\alpha}\|y^t-y_+^t(z^t)\|^2-24p\beta\|x(z^{t})-x(y^t_+(z^t), z^{t})\|^2\nonumber\\
			= & \frac{1}{16\alpha}\|y^t-y_+^t(z^t)\|^2+\frac{1}{16\alpha}\|y^t-y_+^t(z^t)\|^2-24p\beta\|x(z^{t})-x(y^t_+(z^t), z^{t})\|^2\nonumber\\
			\geq & \frac{1}{16\alpha}\|y^t-y_+^t(z^t)\|^2+\frac{1}{16\alpha}\|y^t-y_+^t(z^t)\|^2-24p\beta\kappa_5^2\|y^t-y_+^t(z^t)\|^2\nonumber\\
			\geq &\frac{1}{16\alpha}\|y^t-y_+^t(z^t)\|^2,
		\end{align*}        
       	where the first inequality is due to the dual error bound established in Proposition \ref{prop: strong eb}, and the second inequality is due to the choice of $\beta$ in \eqref{eq: beta_range2}.

        \item One of \eqref{eq: condition1}-\eqref{eq: condition3} is violated. We consider each of the three possibilities. 
        \begin{enumerate}
        	\item Condition \eqref{eq: condition1} is violated. Suppose $\|x^t - x^{t+1}\|^2  > 384D_X^2 p c \beta$, then
        	\begin{align*}
				&\frac{1}{8c}\|x^t-x^{t+1}\|^2-24p\beta\|x(z^{t})-x(y^t_+(z^t), z^{t})\|^2\nonumber\\
				\geq &\frac{1}{16c}\|x^t-x^{t+1}\|^2+\frac{1}{16c}384D_X^2pc\beta-24 D_X^2 p \beta=\frac{1}{16c}\|x^t-x^{t+1}\|^2.
			\end{align*}
        	\item Condition \eqref{eq: condition2} is violated. Suppose $ \|y^t - y_+^t(z^t)\|^2 > 384D_X^2 p \alpha \beta$, then 
        	\begin{align*}
        		&\frac{1}{8\alpha}\|y^t-y_+^t(z^t)\|^2-24p\beta\|x(z^{t})-x(y^t_+(z^t), z^{t})\|^2\nonumber\\
				\geq & \frac{1}{16\alpha}\|y^t-y_+^t(z^t)\|^2+\frac{1}{16\alpha}384 D_X^2p \alpha \beta-24 D_X^2 p \beta=\frac{1}{16\alpha}\|y^t-y_+^t(z^t)\|^2.
        	\end{align*}
        	\item Condition \eqref{eq: condition3} is violated. Suppose $\|z^t - x^{t+1}\|^2 > 384 \|x(z^t) - x(y^t_+(z^t), z^t)\|^2$, then 
        	\begin{align*}
        			 & \frac{p}{8\beta}\|z^t-z^{t+1}\|^2-24p\beta\|x(z^{t})-x(y^t_+(z^t), z^{t})\|^2\nonumber\\
				= 	 & \frac{p}{16\beta}\|z^t-z^{t+1}\|^2+\frac{p\beta}{16}\|z^t-x^{t+1}\|^2-24p\beta\|x(z^{t})-x(y^t_+(z^t), z^{t})\|^2\\
				\geq & \frac{p}{16\beta}\|z^t-z^{t+1}\|^2,
        	\end{align*}
        	where the equality is due to the update $z^{k+1} = z^k + \beta(x^{k+1}-z^k)$. 
        \end{enumerate}
    \end{enumerate}
    In both cases, the claimed inequality holds. This completes the proof. 
\end{proof}

The next proposition shows that if we choose $B$ large enough, and if $\|x(y,z)-z\|$ and $\|y_+(z) - y\|$ are small, then $x(y,z)$ are almost feasible for problem \eqref{eq: nlp}. 
\begin{lemma}\label{lemma: primal feas and cs}
	Suppose Assumption \ref{assumption: basic assumption} holds, and we choose $B$ such that 
 \begin{align}\label{eq: B rule}
    B >  \underline{B} := \max \left\{ \frac{4\nabla_f D_X + 4m K_h D_X + 2pD_X^2}{\Delta_0} + 1, \frac{2\nabla_f + 2pD_X}{\sigma_{X^*}} \right\}. 
\end{align}
	If for some $z\in X$ and $y \in Y$, we have $\max\{ \|x(y,z) - z\|, \|y_+(z) - y\|\} \leq \eta$ for some $ 0 < \eta \leq \overline{\eta}$, where 
	\begin{align}
		\overline{\eta} : = \min \left \{ \frac{1}{4} \Delta_0 \alpha, 1 \right \}
	\end{align}
    Then it holds that 
    \begin{align}
		\|\Pi_{\R^m_+}(h(x(y,z)))\|  \leq \frac{ \sqrt{m} \eta}{\alpha}, \quad |\langle y_+(z), h(x(y,z))| \leq \frac{m\underline{B} \eta}{\alpha}. 
	\end{align}
\end{lemma}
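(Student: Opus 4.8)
The plan is to establish the two bounds separately, starting from the characterizations of $x(y,z)$ and $y_+(z)$ as fixed points of projected operators.

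First I would handle primal (near-)feasibility. The key observation is that $y_+(z) = \Pi_Y(y + \alpha h(x(y,z)))$, and by nonexpansiveness of $\Pi_Y$ together with the hypothesis $\|y_+(z)-y\|\le \eta$, one controls $\|\Pi_Y(y+\alpha h(x(y,z))) - y\|\le \eta$. The point is to argue coordinatewise: for each $i\in[m]$, if $h_i(x(y,z)) > 0$ then the unconstrained update $y_i + \alpha h_i(x(y,z))$ moves up, and since $y_i \ge 0$, the projection onto $[0,B]$ either keeps moving up (if it does not hit the cap $B$) giving $|y_{+,i}(z) - y_i| = \alpha h_i(x(y,z))$, or it saturates at $B$. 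To rule out the saturation case — or rather to bound its consequences — I need to show $y_i + \alpha h_i(x(y,z)) $ cannot overshoot $B$ by much, which is exactly where the lower bound on $B$ in \eqref{eq: B rule} and the crude estimate $\|h(x(y,z))\|\le M_h \le \sqrt m K_h D_X$ (recovered from \eqref{eq: problem constants}) enter; the Slater point $\hat x$ with $h_i(\hat x)\le -\Delta_0$ plus an optimality/complementarity argument on $x(y,z)$ should bound $\|y_+(z)\|$ or the relevant components below $B$. Concluding, $\alpha [h_i(x(y,z))]_+ \le |y_{+,i}(z) - y_i| \le \eta$ for each active coordinate, so $\|\Pi_{\R^m_+}(h(x(y,z)))\| = \|([h_i(x(y,z))]_+)_i\| \le \frac{\sqrt m \, \eta}{\alpha}$.

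Next, for approximate complementary slackness, I would split $\langle y_+(z), h(x(y,z))\rangle$ into the sum over coordinates where $h_i(x(y,z)) \ge 0$ and where $h_i(x(y,z)) < 0$. On the nonnegative part, use $0 \le y_{+,i}(z) \le \underline B$ (having shown the dual iterates stay in $[0,\underline B]$ under the choice of $B$) and the primal bound $[h_i(x(y,z))]_+ \le \eta/\alpha$ just derived, giving a contribution at most $m \underline B \eta / \alpha$. On the negative part, $h_i(x(y,z)) < 0$ forces $y_i + \alpha h_i(x(y,z)) < y_i$, so the projection moves down, and $\Pi_{[0,B]}$ gives $y_{+,i}(z) = \max\{0, y_i + \alpha h_i(x(y,z))\}$; hence $0 \le y_{+,i}(z) \le y_i$ and $y_i - y_{+,i}(z) \le \alpha |h_i(x(y,z))|$ whenever $y_{+,i}(z) > 0$, i.e. $y_{+,i}(z)|h_i(x(y,z))| \le y_{+,i}(z) \cdot \frac{y_i - y_{+,i}(z)}{\alpha} \le \frac{1}{\alpha} y_{+,i}(z)(y_i - y_{+,i}(z)) \le \frac{1}{\alpha}|y_{+,i}(z) - y_i| \le \frac{\eta}{\alpha}$ after using $y_{+,i}(z) \le 1$ once $\eta \le \overline\eta \le 1$ and $B$ is large — actually more carefully $|y_{+,i}(z) h_i(x(y,z))| \le y_{+,i}(z)\cdot\frac{\eta}{\alpha} \le \underline B\frac{\eta}{\alpha}$. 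Summing both parts yields $|\langle y_+(z), h(x(y,z))\rangle| \le \frac{m\underline B \eta}{\alpha}$.

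The main obstacle I anticipate is the saturation-at-$B$ issue in the first step: proving that the artificial cap $B$ is never binding (or only binding harmlessly) at the relevant iterates, which requires combining the Slater condition (part 1 of Assumption \ref{assumption: convex regularity}) with a first-order optimality argument for the proximal subproblem defining $x(y,z)$ to produce an a priori bound on $\|y_+(z)\|_\infty$ strictly below $B$ — this is precisely why the specific threshold $\underline B$ in \eqref{eq: B rule} involves $\nabla_f$, $K_h$, $D_X$, $p$, and $\Delta_0$. The bound $\overline\eta \le \tfrac14\Delta_0\alpha$ is presumably what guarantees the perturbed quantity $y_i + \alpha h_i(x(y,z))$ stays comfortably inside $[0,B]$ on the coordinates that matter. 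Once this a priori boundedness is in hand, the remaining coordinatewise projection estimates are routine.
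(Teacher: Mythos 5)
Your proposal follows essentially the same route as the paper's proof: a coordinatewise analysis of the projected dual update $y_+(z)=\Pi_Y(y+\alpha h(x(y,z)))$, with the crucial step being to rule out dual components exceeding $\underline{B}$ via a Slater-point/optimality contradiction for the proximal subproblem defining $x(y,z)$ --- exactly what the paper does by partitioning $[m]$ into three index sets and showing the set with $y^+_i>\underline{B}$ is empty. The only part you leave as a sketch is that contradiction computation (and your side remark $M_h\le\sqrt{m}K_hD_X$ is not justified in general, though it is not load-bearing); the ingredients you name --- the Slater point $\hat{x}$, the constants appearing in $\underline{B}$, and the threshold $\overline{\eta}\le\tfrac14\Delta_0\alpha$ --- are precisely the ones the paper uses.
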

\begin{proof}
	For each $i \in [m]$, denote the $i$-th component of $y_+(z)$ by $y^+_i$. 
    Suppose $h_i$'s are convex and Assumption \ref{assumption: convex regularity} holds.   Consider the condition $\|y_+(z) - y\| \leq \delta$. Each $i \in [m]$ must fall into one of the following three sets. 
    \begin{enumerate}
        \item $I_1 = \{i \in [m]:~y^+_i = 0\}$. Since $|y^+_i - y_i| \leq \|y_+(z) - y\| \leq \eta$, we must have $y_j\in [0, \eta]$ and $h_i(x(y,z))\leq 0$. 
        \item $I_2 = \{i \in [m]:~y^+_i \in (0, \underline{B}]\}$. Since $y^+_i$ belongs to the interior of $[0,B]$, we must have $y^+_i = y_i + \alpha h_i(x(y,z))$, and
        \begin{align}\label{eq: bound on h_i violation}
            |h_i(x(y,z))| = \frac{1}{\alpha} |y^+_i - y_i| \leq  \frac{1}{\alpha} \|y_+(z) - y\| \leq \frac{\eta}{\alpha}. 
        \end{align}
        \item $I_3 = \{i \in [m]:~ y^+_i \in (\underline{B}, B]\}$. Notice that we have $y_i \geq y_i^+ - \eta \geq \underline{B} - 1$. In addition, there are two cases: either $y^+_j \leq B$ and $y^+_j = y_i + \alpha h_i(x(y,z))$, or $y^+_i = B$ and $ y_i + \alpha h_i(x(y,z)) > B$. 
        In the first case, \eqref{eq: bound on h_i violation} holds as well; in the second case, we have $h_i(x(y,z)) > 0$. 
    \end{enumerate}
    We claim that  $I_3 = \emptyset$ under Assumption \ref{assumption: convex regularity}. For the purpose of contradiction, suppose there exists at least one index $i \in I_3$. Recall that $\hat{x} \in X$ is a Slater point such that $h_i(\hat{x}) < -\Delta_0 $ for all $i \in [m]$. Then there exists $\theta \in (0, 1)$ such that the point defined by 
    \begin{align}
        x_\theta := \theta \hat{x} + (1-\theta) x(y,z) \in X
    \end{align}
    satisfies that
    \begin{align}
        h_i(x_\theta) \leq -\frac{1}{2} \Delta_0, ~ \forall i \in [m]. 
    \end{align}
    We show that $x_\theta$ achieves a lower objective than $x(y,z)$ in $\min_{x\in X} K(x,z;y)$, hence contradicting to the optimality of $x(y,z)$. For $j \in J_1$, we have 
    \begin{align}\label{eq: J1}
        y_i(h_i(x_\theta) - h_i(x(y,z))) \leq \eta K_h \|x_\theta - x(y,z)\| \leq \eta K_h D_X
    \end{align}
    For $i \in I_2$, since $\eta/\alpha \leq \frac{1}{2}\Delta_0$, we have $h_i(x_\theta) \leq -\frac{\Delta_0}{2} \leq -\eta/\alpha \leq h_i(x(y,z))$ and hence
     \begin{align}\label{eq: J2}
        y_i(h_i(x_\theta) - h_i(x(y,z))) \leq  0.
    \end{align}
    Finally for $i \in I_3$, since $y_i \geq \underline{B}-1$, and either $h_i(x(y,z)) \geq - \frac{\eta}{\alpha} \geq -\frac{\Delta_0}{4}$ or $h_i(x(y,z)) > 0$, we have
    \begin{align}\label{eq: J3}
        y_i(h_i(x_\theta) - h_i(x(y,z))) \leq - \frac{(\underline{B}-1)\Delta_0}{4}.
    \end{align}
    As a result, we have
    \begin{align*}
        & \left(f(x_\theta) + \langle y, h(x_\theta)) \rangle + \frac{p}{2}\|x_\theta - z\|^2\right) - \left(  f(x(y,z)) + \langle y, h(x(y,z)) \rangle + \frac{p}{2}\|x(y,z)- z\|^2 \right) \\
        \leq  & f(x_\theta) -  f(x(y,z)) + \sum_{j \in [m]} y_j (h_i(x_\theta) - h_i(x(y,z))) + \frac{p}{2}\|x_\theta - z\|^2 \\
        \leq & \nabla_f \|x_\theta - x(y,z)\| + \sum_{j \in J_1} y_j (h_i(x_\theta) - h_i(x(y,z))) + \sum_{j \in J_3} y_j (h_i(x_\theta) - h_i(x(y,z))) + \frac{p}{2}\|x_\theta - z\|^2 \\
        \leq & \nabla_f D_X + \frac{p}{2}D_X^2 + (m-1) \eta K_h D_X  -\frac{(\underline{B}-1)\Delta_0}{4} < 0
    \end{align*}
    where the second inequality is due to the Lipschitzness of $f$ over $X$ and \eqref{eq: J2}, and the last inequality is due to \eqref{eq: J1}, \eqref{eq: J3}, the compactness of $X$, as well as the claimed value of $\underline{B}$. This is a desired contradiction. As a result, we have either
    \begin{align}
        h_i(x(y,z)) \leq 0, ~  y^+_i = 0, \notag 
    \end{align}
    or 
    \begin{align} 
        |h_i(x(y,z))| \leq \frac{\eta}{ \alpha}, ~ y^+_i \in [0, \underline{B}]. \notag 
    \end{align}
	Consequently, it holds that 
	\begin{align*}
		\|\Pi_{R^m_+}(h(x(y,z)))\|  = & \sqrt{\sum_{i=1}^m \max\{0, h_i(x(y,z))\}^2} \leq \frac{ \sqrt{m} \eta}{\alpha}, \\
		|\langle y_+(z), h(x(y,z)) \rangle| \leq & \sum_{i=1}^m |y^+_i | |h_i(x(y,z))|  \leq \frac{m\underline{B} \eta}{\alpha}. 
	\end{align*}
	This completes the proof. 
\end{proof}

Now we are ready to present the iteration complexity of Algorithm \ref{Alg:SProxALM}

\begin{theorem}
	Suppose Assumptions \ref{assumption: basic assumption} and \ref{assumption: convex regularity} holds and select algorithmic parameters $(p, c, \alpha, \beta, B)$ according to \eqref{eq: p range}, \eqref{eq: c_range}, \eqref{eq: alpha_range}, \eqref{eq: beta_range2}, and \eqref{eq: B rule} respectively. Further define constants:
\begin{subequations}\label{eq: lambda}
\begin{align}
	\lambda_0 := &  \min\{1/(16c), 1/(16\alpha), p/(16\beta)\},  \\
	\lambda_1 := & L_f + \sqrt{m}B L_h + \frac{1}{c} + p + \frac{p}{\beta}, \\
	\lambda_2 := & \frac{\sqrt{m}(\kappa_3 + 1/\beta)}{\alpha} + K_h \kappa_3,  \\
	\lambda_3 := & M_h + \sqrt{m}\underline{B}K_h\kappa_3 + \frac{m\underline{B}(\kappa_3 + 1/\beta)}{\alpha}. 
\end{align}
Then given any $\epsilon >0$, Algorithm \ref{Alg:SProxALM} finds an $\epsilon$-stationary point of problem \eqref{eq: nlp} in no more than $T$ iterations, where 
\end{subequations}
\begin{align}\label{eq: T_ub}
	T \leq \left \lceil  \frac{ (\Phi^0 - \underline{f}) \max\{\lambda_1, \lambda_2, \lambda_3\}^2 }{\epsilon^2 \lambda_0 }\right \rceil = \mathcal{O}(\epsilon^2). 
\end{align}
\end{theorem}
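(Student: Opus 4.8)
The plan is to combine the descent inequality from Lemma~\ref{lemma: potential function descent} with the approximate-feasibility estimate of Lemma~\ref{lemma: primal feas and cs}, via a standard ``telescope + pigeonhole'' argument. First I would verify that $\Phi^t$ is bounded below: since $d(y^t,z^t) \le P(z^t)$ always holds (because $d(y,z) = \min_x K(x,z;y) \le \max_{y'}\min_x K(x,z;y') = P(z)$ by Lemma~\ref{lemma: minimax thm}), we get $\Phi^t = K(x^t,z^t;y^t) - 2d(y^t,z^t) + 2P(z^t) \ge K(x^t,z^t;y^t) \ge f(x^t) \ge \underline f$, using $\langle y^t, h(x^t)\rangle \ge 0$ (dual variables are nonnegative on $Y = [0,B]^m$ — wait, $h(x^t)$ need not be $\le 0$, so one must be slightly more careful; actually $\langle y^t,h(x^t)\rangle + \tfrac p2\|x^t-z^t\|^2$ is what gets bounded, and a cleaner route is to bound $\Phi^t \ge \underline f$ using that each of $K - 2d$ and $P \ge$ something; I'd reconcile this with whatever bound the paper has already used implicitly). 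Granting $\Phi^t \ge \underline f$ for all $t$, summing \eqref{eq: potential function descent} over $t = 0,\dots,T-1$ telescopes to
\begin{align*}
\sum_{t=0}^{T-1} \left( \tfrac{1}{16c}\|x^t - x^{t+1}\|^2 + \tfrac{1}{16\alpha}\|y^t - y_+^t(z^t)\|^2 + \tfrac{p}{16\beta}\|z^t - z^{t+1}\|^2 \right) \le \Phi^0 - \Phi^T \le \Phi^0 - \underline f.
\end{align*}
Hence by pigeonhole there is an index $t^\star \le T$ with $\lambda_0\big(\|x^{t^\star}-x^{t^\star+1}\|^2 + \|y^{t^\star}-y_+^{t^\star}(z^{t^\star})\|^2 + \|z^{t^\star}-z^{t^\star+1}\|^2\big) \le (\Phi^0 - \underline f)/T$, so each of the three quantities is at most $(\Phi^0-\underline f)/(T\lambda_0) =: \rho^2$.

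Next I would show that at this $t^\star$ the point $x^{t^\star+1}$ (or perhaps $x(y^{t^\star},z^{t^\star})$, whichever the error bounds let us convert to) is $\epsilon$-stationary with $\epsilon \propto \rho$. The three KKT-type conditions in Definition~\ref{def: stationary} must each be controlled. For \emph{stationarity} \eqref{eq: nlp stationary dual infeas}: the primal update $x^{t^\star+1} = \Pi_X(x^{t^\star} - c\nabla_x K(x^{t^\star},z^{t^\star};y^{t^\star}))$ is a projected-gradient step, and the standard projection inequality gives $\tfrac1c(x^{t^\star}-x^{t^\star+1}) - \nabla_x K(x^{t^\star},z^{t^\star};y^{t^\star}) \in N_X(x^{t^\star+1})$; rearranging and moving the gradient from $x^{t^\star}$ to $x^{t^\star+1}$ (Lipschitz error controlled by $\|x^{t^\star}-x^{t^\star+1}\|$, with constant $L_f + \sqrt m B L_h + p$) and absorbing the proximal term $p(x^{t^\star+1}-z^{t^\star})$ — here $\|x^{t^\star+1}-z^{t^\star}\| = \tfrac1\beta\|z^{t^\star}-z^{t^\star+1}\| \le \rho/\beta$ — yields $\xi \in \nabla f(x^{t^\star+1}) + \nabla h(x^{t^\star+1})^\top y^{t^\star} + N_X(x^{t^\star+1})$ with $\|\xi\| \le \lambda_1 \rho$. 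For \emph{primal feasibility} \eqref{eq: nlp stationary primal infeas} and \emph{complementary slackness} \eqref{eq: nlp stationary cs}: I would invoke Lemma~\ref{lemma: primal feas and cs} with $z = z^{t^\star}$, $y = y^{t^\star}$, after checking its hypotheses $\|x(y^{t^\star},z^{t^\star}) - z^{t^\star}\| \le \eta$ and $\|y_+(z^{t^\star}) - y^{t^\star}\| \le \eta$ with $\eta \le \overline\eta$. The second follows since $\|y^{t^\star}-y_+(z^{t^\star})\| = \|y^{t^\star}-y_+^{t^\star}(z^{t^\star})\| \le \rho$; the first needs a triangle-inequality detour through $x^{t^\star+1}$ and $x(y^{t^\star},z^{t^\star})$ using bounds of the type \eqref{eq: condition1}, the error bounds \eqref{eb2}--\eqref{eb3}, and $\|z^{t^\star}-x^{t^\star+1}\| = \tfrac1\beta\|z^{t^\star}-z^{t^\star+1}\|$. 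This gives $\|\Pi_{\R^m_+}(h(x(y^{t^\star},z^{t^\star})))\| \le \sqrt m\,\eta/\alpha$ and $|\langle y_+(z^{t^\star}),h(x(y^{t^\star},z^{t^\star}))\rangle| \le m\underline B\,\eta/\alpha$, and I'd push these back to $x^{t^\star+1}$ using $K_h$-Lipschitzness of $h$ and the distance $\|x^{t^\star+1}-x(y^{t^\star},z^{t^\star})\|$, producing the constants $\lambda_2,\lambda_3$. Collecting everything, $x^{t^\star+1}$ is $\max\{\lambda_1,\lambda_2,\lambda_3\}\rho$-stationary.

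Finally, setting $\max\{\lambda_1,\lambda_2,\lambda_3\}\rho \le \epsilon$, i.e. $\rho^2 = (\Phi^0-\underline f)/(T\lambda_0) \le \epsilon^2/\max\{\lambda_1,\lambda_2,\lambda_3\}^2$, and solving for $T$ gives exactly the bound \eqref{eq: T_ub}; one should also double-check that $T$ this large forces $\rho \le$ (the various thresholds $\delta,\overline\eta$ needed to apply Lemmas~\ref{lemma: potential function descent} and~\ref{lemma: primal feas and cs}), which may require $\epsilon$ to be below a problem-dependent constant or may be automatic from the parameter choices — I would verify this is consistent with \eqref{eq: beta_range2}. \textbf{The main obstacle} I anticipate is the bookkeeping in converting the ``one small step'' quantities $\|x^{t^\star}-x^{t^\star+1}\|$, $\|y^{t^\star}-y_+^{t^\star}(z^{t^\star})\|$, $\|z^{t^\star}-z^{t^\star+1}\|$ into the \emph{exact} KKT residual at a single explicit point: the error-bound lemmas are phrased in terms of the auxiliary maps $x(y,z)$, $x(z)$, $y_+(z)$, so assembling the triangle inequalities so that all the Lipschitz constants land precisely as $\lambda_1,\lambda_2,\lambda_3$ — and ensuring the hypotheses of Lemma~\ref{lemma: primal feas and cs} (the $\eta \le \overline\eta$ threshold) hold for all sufficiently large $T$ — is where the real care is needed; the descent and pigeonhole parts are routine.
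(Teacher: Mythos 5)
Your proposal is correct and follows essentially the same route as the paper: telescope the descent inequality of Lemma \ref{lemma: potential function descent}, apply pigeonhole to extract an index with small step sizes, convert the projected-gradient step into the dual-feasibility residual (yielding $\lambda_1$), and invoke Lemma \ref{lemma: primal feas and cs} after the triangle-inequality detour through $x(y^t,z^t)$ to get the $\lambda_2$ and $\lambda_3$ bounds. The lower bound $\Phi^t \ge \underline{f}$ that you flag as needing care is exactly the paper's Lemma \ref{lemma: lower bounded}, proved by the ``cleaner route'' you suggest ($K \ge d$ and $P \ge d$ give $\Phi \ge P \ge \underline{f}$), and the $\eta \le \overline{\eta}$ threshold is handled by requiring $T$ large enough, just as you anticipate.
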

\begin{proof}
	By Lemma \ref{lemma: potential function descent}, summing \eqref{eq: potential function descent} from $0$ to some positive index $T-1$, we have
	\begin{align}
		\sum_{t = 0}^{T-1} \left(  \frac{1}{16c}\|x^t - x^{t+1}\|^2 +  \frac{1}{16 \alpha} \|y^t - y^{t}_+(z^t)\|^2 + \frac{p}{16 \beta}  \|z^t -z^{t+1}\|\right) \leq \sum_{t = 0}^{T-1} \Phi^t - \Phi^{t+1} \leq \Phi^0 - \underline{f}, \notag 
	\end{align}
	where the last inequality is due to Lemma \ref{lemma: lower bounded}. As a result, there exists a specific $t \in \{0, 1, \cdots, T-1\}$ such that 
	\begin{align}
		\|x^t - x^{t+1}\|^2 + \|y^t - y^{t}_+(z^t)\|^2 + \|z^t -z^{t+1}\|^2  \leq \frac{\Phi^0 - \underline{f}}{ T \lambda_0}, \notag
	\end{align}
	which further implies that 
	\begin{align} \label{eq: bound max diff}
		\varrho_{t+1}:= \max\{\|x^t - x^{t+1}\|, \|y^t - y^{t}_+(z^t)\|,  \|z^t -z^{t+1}\| \} \leq \sqrt{\frac{\Phi^0 - \underline{f}}{T \lambda_0}}. 
	\end{align}
	Next we investigate the approximate stationarity of $(x^{t+1}, y^t)$. By the update of $x^{k+1}$, we have 
	\begin{align}
		 \xi^t : =  &  \nabla f(x^{t+1}) - \nabla f(x^t) + \langle \nabla h(x^{t+1}) - \nabla h(x^t), y^t \rangle - p(x^t-z^t) - \frac{1}{c}(x^{t+1}-x^t) \notag  \\
		\in & \nabla f(x^{t+1}) +\nabla h(x^{t+1})^\top y^t + N_X(x^{t+1})	\notag,
	\end{align}
	and hence 
	\begin{align}
		\|\xi^t\| \leq & \|\nabla f(x^{t+1}) - \nabla f(x^t)\|	 + \|y^t\|\|\nabla h(x^{t+1}) - \nabla h(x^t) \| + p\|x^t-z^t\|+ \frac{1}{c}\|x^{t+1}-x^t\| \notag \\
		\leq & L_f\|x^t - x^{t+1}\| + \sqrt{m} BL_h\|x^t - x^{t+1}\| + p\|x^t-x^{t+1}\| + \frac{p}{\beta}\|z^t-z^{t+1}\| +  \frac{1}{c}\|x^{t+1}-x^t\|\notag  \\
		\leq & \left(L_f + \sqrt{m}B L_h + \frac{1}{c} + p + \frac{p}{\beta}\right) \varrho_t  = \lambda_1 \varrho_{t+1} \label{eq: d_error}
	\end{align}
	where the second inequality is due to $\nabla f$ being $L_f$-Lipschitz, $\nabla h$ being $L_h$ Lipschitz, $y\in Y$, and $\|x^t - z^t\| \leq \|x^t - x^{t+1}\| + \|x^{t+1}-z^t\| =\|x^t - x^{t+1}\|  + \frac{1}{\beta}\|z^t-z^{t+1}\|$. Also notice that by \eqref{eb3} and the update of $z^{k+1}$, we haves 
	\begin{align}
		\|x(y^t, z^t) - z^t\| \leq \|x(y^t, z^t) - x^{t+1}\| + \|x^{t+1} - z^t\| \leq \sigma_3\|x^t - x^{t+1}\| + \frac{1}{\beta}\|z^t- z^{t+1}\|, \notag
	\end{align}
	and therefore we have 
	\begin{align}
		& \max \{\|x(y^t, z^t) - z^t\|, \|y^t - y_+^t(z^t)\|\} \leq \eta_t := (\kappa_3 + 1/\beta)\varrho_{t+1}, \notag 
	\end{align}
	Further choose $T \geq \frac{\Phi^0 - \underline{f}}{\lambda(\alpha) \overline{\eta}^2}$ so that Lemma \ref{lemma: primal feas and cs} can be invoked. Next we consider primal infeasibility at $x^{t+1}$: 
	\begin{align}
		\|\Pi_{X}(h(x^{t+1}))\| \leq & \|\Pi_{X}(h(x(y^t, z^t))\| + \|\Pi_{X}(h(x(y^t, z^t)) - \Pi_{X}(h(x^{t+1}))\| \notag \\
		\leq & \frac{\sqrt{m}}{\alpha} \eta_t + \|h(x(y^t, z^t) - h(x^{t+1})\| \notag \\
		\leq & \frac{\sqrt{m}}{\alpha} \eta_t + K_h \kappa_3 \|x^t-x^{t+1}\|  \notag \\
		\leq & \left(\frac{\sqrt{m}(\kappa_3 + 1/\beta)}{\alpha} + K_h \kappa_3\right) \varrho_t = \lambda_2 \varrho_{t+1},\label{eq: p_error}
	\end{align}
	where the second inequality is due to the nonexpansiveness of the projection operator, the third inequality is due to $h$ being $K_h$-Lipschitz and \eqref{eb3}, and the last inequality is due to Lemma \ref{lemma: primal feas and cs}. Moreover, the violation of complementary slackness can be bounded by 
	\begin{align}
			& |\langle y^t, h(x^{t+1}) \rangle| \notag  \\
			\leq  & |\langle y^t - y_+^t(z^t), h(x^{t+1})\rangle| + |\langle y_+^t(z^t), h(x^{t+1}) - h(x(z^t, y^t))\rangle| + |\langle y_+^t(z^t), h(x(z^t, y^t))\rangle| \notag  \\
			\leq & M_h\|y^t - y_+^t(z^t)\| + \sqrt{m}\underline{B}K_h\kappa_3\|x^t - x^{t+1}\| + \frac{m\underline{B} \eta_t}{\alpha} \notag \\
			\leq & \left( M_h + \sqrt{m}\underline{B}K_h\kappa_3 + \frac{m\underline{B}(\kappa_3 + 1/\beta)}{\alpha}\right) \varrho_t = \lambda_3 \varrho_{t+1} , \label{eq: cs_error}
	\end{align}
	where the second inequality is due to $h$ being bounded and $K_h$-Lipschitz over $X$, \eqref{eb3}, and Lemma  \ref{lemma: primal feas and cs}. As a result of \eqref{eq: d_error}-\eqref{eq: cs_error}, we see that 
	\begin{align*}
		 & \max\{ \|\xi^t\|, \|\Pi_{X}(h(x^{t+1}))\|, |\langle y^t, h(x^{t+1}) \rangle|\}	\\
		 \leq & \max\{\lambda_1, \lambda_2, \lambda_3 \}\varrho_{t+1} \leq \max\{\lambda_1, \lambda_2, \lambda_3 \} \sqrt{\frac{\Phi^0 - \underline{f}}{T \lambda_0}} \leq \epsilon
	\end{align*}
	where the second inequality is due to \eqref{eq: bound max diff}, and the last inequality is due to the claimed upper bound on $T$ in \eqref{eq: T_ub}. 
\end{proof}

\section{Experiments} \label{sec: experiments}

Convergence behavior of the proposed smoothed proximal ALM (SProx-ALM for short) for a class of nonconvex quadratic programming (QP) problems, i.e., nonconvex quadratic loss function with convex quadratic constraints, is studied in this section. In particular, the considered QP problems take the following form
\begin{equation}\label{opt:numerQP}
\begin{aligned}
    \min_{x}\ &\frac{1}{2}x^\top Qx+r^\top x\\
    \textrm{s.t.}\ &\frac{1}{2}x^\top A_ix + b_i^\top x + c_i \leq 0,\ i=1,2,\ldots,m \\
    & \ell_j \leq x_j \leq u_j, \ j=1,2,\ldots,n
\end{aligned}
\end{equation}
where $Q\in\mathbb{R}^{n\times n}$ is a symmetric matrix with smallest eigenvalue $\lambda_{\min}(Q)\leq 0$, $r\in\mathbb{R}^n$, and for $i=1,2,\ldots, m$, $A_i\in\mathbb{R}^{n\times n}$ is a positive semi-definite matrix, $b_i\in\mathbb{R}^n$, and $c_i<0$ is a negative constant so that the Slater condition holds. The lower and upper bounds for each coordinate $j=1,2,\ldots,n$ are set as $\ell_j=-10$ and $ u_j=10$.

Different choices of problem size $n$ are considered, i.e., $n=50,100,200$, and $m$ is fixed to be $m=20$. Each entry of $Q$, $r$, $A_i$, and $b_i$ is independently sampled from standard Gaussian distribution and $\lambda_{\min}(Q)$ is set to $-0.1,-1$ and $-10$ by minusing a properly scaled identity matrix. The stationary gap defined in \eqref{eq: nlp stationary dual infeas}-\eqref{eq: nlp stationary cs} is adopted as the performance metric, correspond to dual feasibility, primal feasibility, and complimentary slackness, respectively. Another ALM based algorithm named iALM~\cite{LiXu2020almv2} is adopted as baseline for the comparison. The parameters of iALM are set by referring\cite[Algorithm 3]{LiXu2020almv2} with notations keeping the same, i.e., $\epsilon=10^{-4}$, $\beta_0=0.01$, $p=|\lambda_{\min}(Q)|$, $L_{\min}=\rho$, $\sigma=3$, $\gamma_1=2$, $\gamma_2=1.25$, The parameters in SProx-ALM are set as $p=3|\lambda_{\min}(Q)|$, $\alpha=0.01$, $\beta=0.05$, $c=0.01$, $B=10^4$.

The convergence curves of one trial for different problem sizes $n$ with $\rho=10$ are compared in Fig.\ref{fig:SProxALM}. It is worth noting that one iteration of iALM corresponds to one inner line search iteration, as it solves a subproblem with the same form as the one solved in SProx-ALM. The comparison in Fig.\ref{fig:SProxALM} clearly demonstrates that the proposed SProx-ALM algorithm exhibits significantly faster convergence compared to iALM. In Tables~\ref{tab: res1}-\ref{tab: res2}, the computational time, number of gradient evaluations, and function evaluations are compared for achieving a stationary gap of $10^{-5}$. It can be observed that the proposed SProx-ALM is much faster (five to ten times faster, depending on the setting) than iALM, as it requires significantly fewer gradient evaluations and does not need to evaluate the function value.
\begin{figure}[H]
    \centering
    \includegraphics[width=0.32\linewidth]{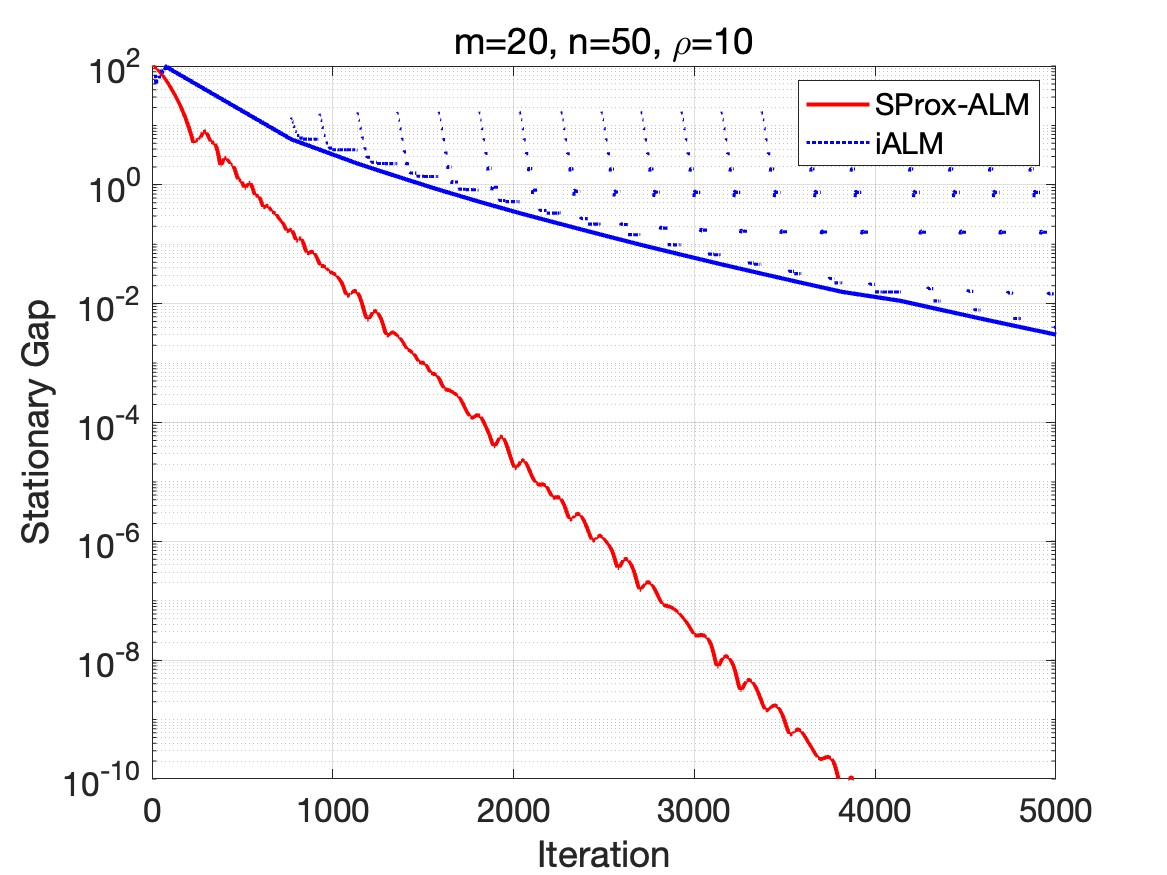}
    \includegraphics[width=0.32\linewidth]{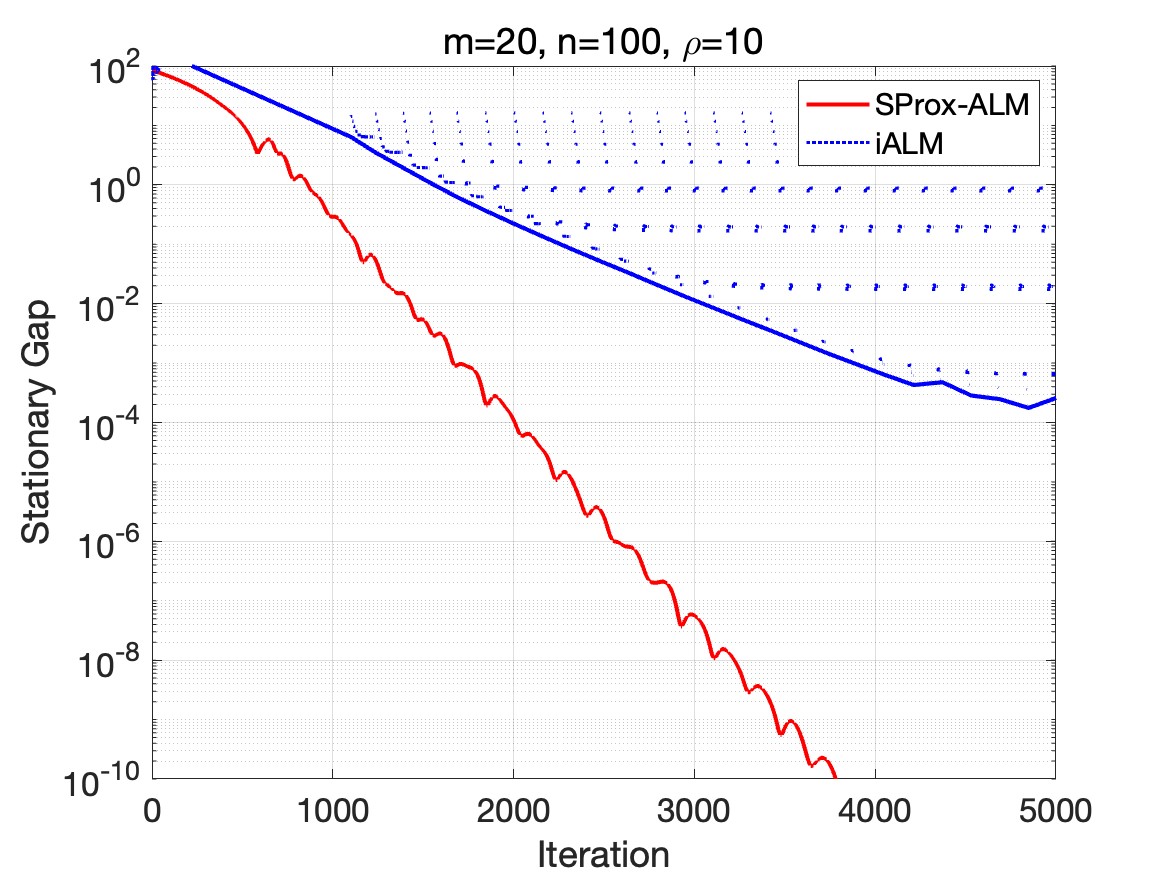}
    \includegraphics[width=0.32\linewidth]{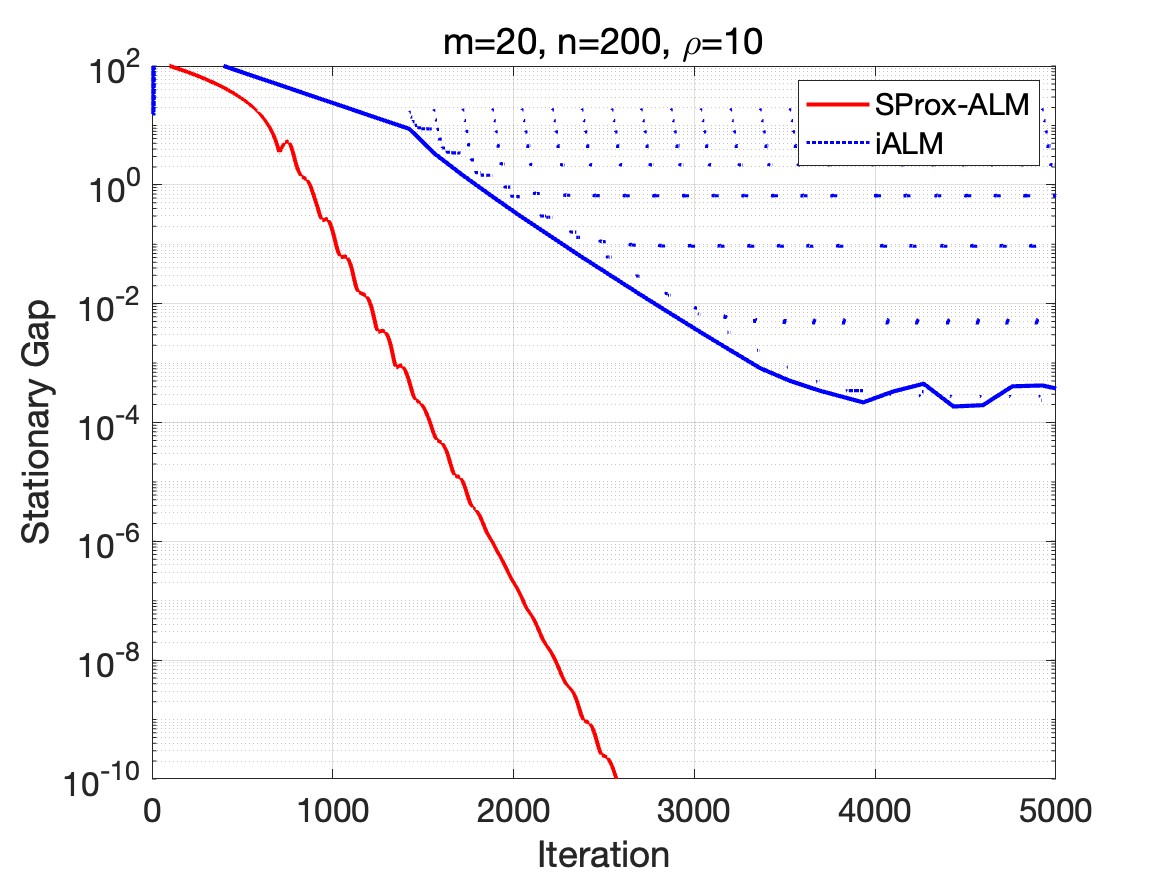}
    \caption{Convergence curves of iALM and SProx-ALM with $\lambda_{\min}(Q)=-10$.} 
    \label{fig:SProxALM}
\end{figure}

\begin{table}[H]
\centering
\small 
\caption{Results of problem instances with $\rho=0.1,\ n=50,\ m=20 $.}
\begin{tabular}{ccccc|cccc}
\toprule
& \multicolumn{4}{c}{\textbf{SProxALM}}  & \multicolumn{4}{c}{\textbf{iALM}}      \\ 
\midrule
Trial & Gap & Time & \# of Grad & \multicolumn{1}{c|}{\# of Obj} & \multicolumn{1}{c}{Gap} & \multicolumn{1}{c}{Time} & \multicolumn{1}{c}{\# of Grad} & \multicolumn{1}{c}{\# of Obj} \\ \hline
1  &  9.8e-06 & 0.458 & 5710 & 0& 1.3e-05 & 1.872 & 9159 & 11135 \\
2  &  9.9e-06 & 0.322 & 3984 & 0& 1.4e-05 & 1.839 & 8777 & 10688 \\
3  &  1.0e-05 & 0.463 & 5904 & 0& 2.3e-05 & 1.488 & 7307 & 8992 \\
4  &  1.0e-05 & 0.340 & 4180 & 0& 1.2e-05 & 1.606 & 8019 & 9797 \\
5  &  1.0e-05 & 0.473 & 5734 & 0& 2.3e-05 & 1.780 & 8948 & 10865 \\ 
\bottomrule
\end{tabular}
\label{tab: res1}
\end{table}

\begin{table}[H]
\centering
\small 
\caption{Results of problem instances with $\rho=1,\ n=50,\ m=20 $.}
\begin{tabular}{ccccc|cccc}
\toprule
& \multicolumn{4}{c}{\textbf{SProxALM}}  & \multicolumn{4}{c}{\textbf{iALM}}      \\ 
\midrule
Trial & Gap & Time & \# of Grad & \multicolumn{1}{c|}{\# of Obj} & \multicolumn{1}{c}{Gap} & \multicolumn{1}{c}{Time} & \multicolumn{1}{c}{\# of Grad} & \multicolumn{1}{c}{\# of Obj} \\ \hline
1  &  9.9e-06 & 0.372 & 4700 & 0& 4.6e-05 & 1.736 & 8367 & 10688 \\
2  &  1.0e-05 & 0.416 & 5266 & 0& 9.1e-05 & 1.859 & 9030 & 11519 \\
3  &  9.9e-06 & 0.336 & 4218 & 0& 9.2e-05 & 1.704 & 8264 & 10500 \\
4  &   9.9e-06 & 0.332 & 4152 & 0& 5.7e-05 & 1.521 & 7410 & 9481 \\
5  &   9.9e-06 & 0.375 & 4644 & 0& 8.8e-05 & 1.720 & 8443 & 10666 \\ 
\bottomrule
\end{tabular}
\end{table}

\begin{table}[H]
\centering
\small 
\caption{Results of problem instances with $\rho=10,\ n=50,\ m=20 $.}
\begin{tabular}{ccccc|cccc}
\toprule
& \multicolumn{4}{c}{\textbf{SProxALM}}  & \multicolumn{4}{c}{\textbf{iALM}}      \\ 
\midrule
Trial & Gap & Time & \# of Grad & \multicolumn{1}{c|}{\# of Obj} & \multicolumn{1}{c}{Gap} & \multicolumn{1}{c}{Time} & \multicolumn{1}{c}{\# of Grad} & \multicolumn{1}{c}{\# of Obj} \\ \hline
1  &  9.9e-06 & 0.406 & 5118 & 0& 7.8e-05 & 6.297 & 29401 & 39517 \\
2  &  1.0e-05 & 0.493 & 6164 & 0& 8.3e-05 & 7.451 & 36160 & 48045 \\
3  &  1.0e-05 & 0.559 & 7094 & 0& 9.2e-05 & 6.302 & 29929 & 41178 \\
4  &  1.0e-05 & 0.521 & 6652 & 0& 9.1e-05 & 5.987 & 27865 & 38756 \\
5  &  9.9e-06 & 0.447 & 5708 & 0& 9.9e-05 & 5.884 & 28127 & 37678 \\ 
\bottomrule
\end{tabular}
\end{table}

\begin{table}[H]
\centering
\small 
\caption{Results of problem instances with $\rho=0.1,\ n=100,\ m=20 $.}
\begin{tabular}{ccccc|cccc}
\toprule
& \multicolumn{4}{c}{\textbf{SProxALM}}  & \multicolumn{4}{c}{\textbf{iALM}}      \\ 
\midrule
Trial & Gap & Time & \# of Grad & \multicolumn{1}{c|}{\# of Obj} & \multicolumn{1}{c}{Gap} & \multicolumn{1}{c}{Time} & \multicolumn{1}{c}{\# of Grad} & \multicolumn{1}{c}{\# of Obj} \\ \hline
1  &  9.9e-06 & 0.376 & 3008 & 0& 7.3e-05 & 2.341 & 7798 & 9410 \\
2  &  9.9e-06 & 0.431 & 3518 & 0& 1.0e-05 & 3.483 & 11719 & 14104 \\
3  &  9.9e-06 & 0.427 & 3432 & 0& 1.3e-05 & 3.409 & 11555 & 13957 \\
4  &  9.8e-06 & 0.410 & 3270 & 0& 1.3e-05 & 3.472 & 11709 & 14132 \\
5  &  9.9e-06 & 0.412 & 3368 & 0& 1.6e-05 & 3.502 & 11718 & 14107 \\ 
\bottomrule
\end{tabular}
\end{table}

\begin{table}[H]
\centering
\small 
\caption{Results of problem instances with $\rho=1,\ n=100,\ m=20 $.}
\begin{tabular}{ccccc|cccc}
\toprule
& \multicolumn{4}{c}{\textbf{SProxALM}}  & \multicolumn{4}{c}{\textbf{iALM}}      \\ 
\midrule
Trial & Gap & Time & \# of Grad & \multicolumn{1}{c|}{\# of Obj} & \multicolumn{1}{c}{Gap} & \multicolumn{1}{c}{Time} & \multicolumn{1}{c}{\# of Grad} & \multicolumn{1}{c}{\# of Obj} \\ \hline
1  &  1.0e-05 & 0.392 & 3254 & 0& 3.6e-05 & 3.225 & 10589 & 13325 \\
2  &  1.0e-05 & 0.406 & 3362 & 0& 4.9e-05 & 2.607 & 8236 & 10553 \\
3  &  1.0e-05 & 0.378 & 3070 & 0& 4.3e-05 & 2.744 & 9014 & 11439 \\
4  &  1.0e-05 & 0.395 & 3256 & 0& 3.2e-05 & 2.754 & 9014 & 11431 \\
5  &  9.9e-06 & 0.418 & 3418 & 0& 5.4e-05 & 2.829 & 9241 & 11695 \\ 
\bottomrule
\end{tabular}
\end{table}

\begin{table}[H]
\centering
\small 
\caption{Results of problem instances with $\rho=10,\ n=100,\ m=20 $.}
\begin{tabular}{ccccc|cccc}
\toprule
& \multicolumn{4}{c}{\textbf{SProxALM}}  & \multicolumn{4}{c}{\textbf{iALM}}      \\ 
\midrule
Trial & Gap & Time & \# of Grad & \multicolumn{1}{c|}{\# of Obj} & \multicolumn{1}{c}{Gap} & \multicolumn{1}{c}{Time} & \multicolumn{1}{c}{\# of Grad} & \multicolumn{1}{c}{\# of Obj} \\ \hline
1  &  9.9e-06 & 0.478 & 3938 & 0& 8.4e-05 & 7.101 & 22535 & 30960 \\
2  &  1.0e-05 & 0.528 & 4372 & 0& 8.0e-05 & 8.886 & 28087 & 38088 \\
3  &  1.0e-05 & 0.501 & 4162 & 0& 7.5e-05 & 7.612 & 24096 & 32642 \\
4  &  1.0e-05 & 0.608 & 4926 & 0& 8.1e-05 & 9.589 & 30541 & 41483 \\
5  &  1.0e-05 & 0.994 & 8160 & 0& 1.0e-04 & 10.857 & 34706 & 46764 \\ 
\bottomrule
\end{tabular}
\end{table}

\begin{table}[H]
\centering
\small 
\caption{Results of problem instances with $\rho=0.1,\ n=200,\ m=20 $.}
\begin{tabular}{ccccc|cccc}
\toprule
& \multicolumn{4}{c}{\textbf{SProxALM}}  & \multicolumn{4}{c}{\textbf{iALM}}      \\ 
\midrule
Trial & Gap & Time & \# of Grad & \multicolumn{1}{c|}{\# of Obj} & \multicolumn{1}{c}{Gap} & \multicolumn{1}{c}{Time} & \multicolumn{1}{c}{\# of Grad} & \multicolumn{1}{c}{\# of Obj} \\ \hline
1  &  9.8e-06 & 2.434 & 2430 & 0& 7.5e-05 & 21.781 & 8955 & 10837 \\
2  &  9.7e-06 & 2.507 & 2510 & 0& 1.1e-05 & 22.788 & 9732 & 11872 \\
3  &  9.8e-06 & 2.540 & 2556 & 0& 1.5e-05 & 28.389 & 12122 & 14641 \\
4  &  1.0e-05 & 2.642 & 2546 & 0& 1.5e-05 & 29.461 & 11955 & 14459 \\
5  &  9.6e-06 & 2.389 & 2372 & 0& 7.4e-05 & 20.594 & 8718 & 10548 \\ 
\bottomrule
\end{tabular}
\end{table}

\begin{table}[H]
\centering
\small 
\caption{Results of problem instances with $\rho=1,\ n=200,\ m=20 $.}
\begin{tabular}{ccccc|cccc}
\toprule
& \multicolumn{4}{c}{\textbf{SProxALM}}  & \multicolumn{4}{c}{\textbf{iALM}}      \\ 
\midrule
Trial & Gap & Time & \# of Grad & \multicolumn{1}{c|}{\# of Obj} & \multicolumn{1}{c}{Gap} & \multicolumn{1}{c}{Time} & \multicolumn{1}{c}{\# of Grad} & \multicolumn{1}{c}{\# of Obj} \\ \hline
1  &  9.9e-06 & 2.297 & 2292 & 0& 8.8e-05 & 18.832 & 7843 & 10059 \\
2  &  9.9e-06 & 2.639 & 2666 & 0& 7.4e-05 & 17.840 & 7412 & 9525 \\
3  &  1.0e-05 & 2.496 & 2482 & 0& 6.7e-05 & 19.028 & 7752 & 9928 \\
4  &  9.8e-06 & 2.583 & 2524 & 0& 3.0e-05 & 23.440 & 9829 & 12543 \\
5  &  9.8e-06 & 2.368 & 2354 & 0& 4.6e-05 & 18.469 & 7498 & 9655 \\ 
\bottomrule
\end{tabular}
\end{table}

\begin{table}[H]
\centering
\small 
\caption{Results of problem instances with $\rho=10,\ n=200,\ m=20 $.}
\begin{tabular}{ccccc|cccc}
\toprule
& \multicolumn{4}{c}{\textbf{SProxALM}}  & \multicolumn{4}{c}{\textbf{iALM}}      \\ 
\midrule
Trial & Gap & Time & \# of Grad & \multicolumn{1}{c|}{\# of Obj} & \multicolumn{1}{c}{Gap} & \multicolumn{1}{c}{Time} & \multicolumn{1}{c}{\# of Grad} & \multicolumn{1}{c}{\# of Obj} \\ \hline
1  &  1.0e-05 & 3.508 & 3096 & 0& 9.7e-05 & 56.507 & 21199 & 28730 \\
2  &  1.0e-05 & 3.950 & 3166 & 0& 7.2e-05 & 59.468 & 23300 & 31192 \\
3  &  9.9e-06 & 3.653 & 3128 & 0& 7.5e-05 & 44.609 & 17858 & 24380 \\
4  &  1.0e-05 & 2.932 & 2888 & 0& 8.1e-05 & 45.309 & 17466 & 23859 \\
5  &  9.9e-06 & 3.364 & 3376 & 0& 9.0e-05 & 45.954 & 18218 & 25078 \\ 
\bottomrule
\end{tabular}
\label{tab: res2}
\end{table}

\section{Conclusion} \label{sec: conclusion}
This paper proposes a smoothed proximal ALM for minimizing a nonconvex smooth function over a convex domain subject to complicated convex functional constraints. The proposed method is single-looped and invokes only first-order oracles, and hence is easy to implement from a practical point of view. In addition, under mild regularity assumptions, we show that the proposed method achieves the best-known  iteration complexity of $\mathcal{O}(\epsilon^{-2})$ for the class of problems considered, complementing the existing literature on first-order methods for nonconvex constrained optimization. We have also numerically demonstrated the superiority of the proposed method over existing methods on various problems scales. 

{For future directions, we are mainly interested in developing a unified single-looped algorithmic framework that can handle both inequality and equality constraints while maintaining the $\mathcal{O}(\epsilon^{-2})$ iteration complexity.
Looking ahead, a natural extension would be to design algorithms that can handle both linear equality constraints \( Ax = b \) and non-linear inequalities \( h(x) \leq 0 \) simultaneously. An initial approach could be to define \( \bar{X} := \{x \in X : Ax = b\} \) and apply a modified version of Algorithm \ref{Alg:SProxALM} to problem \eqref{eq: nlp}, replacing \( X \) with \( \bar{X} \). This modification demands a more complex primal update, as shown in the following optimization problem:
\[
\min_{x \in X }\{ \|x - u\|^2 : Ax = b\}.
\]
While first-order augmented Lagrangian methods (ALM) or penalty approaches could address this, they typically require multiple loops, potentially detracting from the desired $\mathcal{O}(\epsilon^{-2})$ complexity. Despite ongoing research in this area by scholars like Lin et al. \cite{lin2019inexact} and Kong et al. \cite{kong2023iteration}, a single-looped solution remains elusive. This gap sets the stage for future research aimed at achieving efficient single-looped implementations for these more complex problem structures.}

\appendix

\section{Proof of Lemma \ref{lemma: potential function}}\label{appendix: potential function}
We first show that $\Phi^t$ is indeed bounded from below.
\begin{lemma}\label{lemma: lower bounded}
Suppose Assumption \ref{assumption: basic assumption} holds. For any $x,z\in X $ and $y \in Y$, we have $\Phi(x, y, z)\ge \underline{f}$.
\end{lemma}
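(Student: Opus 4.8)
\textbf{Proof proposal for Lemma \ref{lemma: lower bounded}.}

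The plan is to unwind the definition $\Phi(x,z;y) = K(x,z;y) - 2d(y,z) + 2P(z)$ and bound each piece from below using the structure of the objects involved. First I would observe that by the very definition of $d(y,z) = \min_{x'\in X} K(x',z;y)$, we have $K(x,z;y) \geq d(y,z)$ for every $x \in X$; hence $K(x,z;y) - 2d(y,z) \geq -d(y,z)$. So it suffices to show $2P(z) - d(y,z) \geq \underline{f}$, or equivalently $2P(z) \geq d(y,z) + \underline{f}$.

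Next I would exploit the minimax identity from Lemma \ref{lemma: minimax thm}, namely $P(z) = \max_{y'\in Y} d(y',z)$, which immediately gives $P(z) \geq d(y,z)$ for the particular $y \in Y$ in question. Thus $2P(z) - d(y,z) \geq P(z) \geq d(y,z)$, and it remains to check that $P(z) \geq \underline{f}$. For this, recall $P(z) = f(x(z)) + h_Y(x(z)) + \frac{p}{2}\|x(z) - z\|^2$ where $x(z) \in X$. Since $0 \in Y$, we have $h_Y(x(z)) = \max_{y'\in Y}\langle y', h(x(z))\rangle \geq \langle 0, h(x(z))\rangle = 0$; also $\frac{p}{2}\|x(z)-z\|^2 \geq 0$ and $f(x(z)) \geq \min_{x'\in X} f(x') = \underline{f}$ because $x(z) \in X$. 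Combining, $P(z) \geq \underline{f}$, and therefore $\Phi(x,z;y) \geq -d(y,z) + 2P(z) \geq P(z) \geq \underline{f}$.

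The argument is essentially bookkeeping, so I do not anticipate a genuine obstacle; the only point requiring slight care is making sure every nonnegativity claim is justified by an explicit feasible choice — in particular that $0 \in Y = [0,B]^m$ is used to kill the $h_Y$ term, and that $x(z)$, being an element of $X$, is a legitimate competitor in the definition of $\underline{f}$. One could alternatively chain the inequalities as $\Phi = (K - d) + (P - d) + P \geq 0 + 0 + \underline{f}$, using $K \geq d$, $P \geq d$ (minimax), and $P \geq \underline{f}$, which is perhaps the cleanest way to present it.
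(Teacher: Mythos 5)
Your proposal is correct and follows essentially the same route as the paper: the paper's proof consists precisely of the three inequalities $K(x,z;y)\ge d(y,z)$, $P(z)\ge d(y,z)$, and $P(z)\ge\underline{f}$, combined exactly as in your final decomposition $\Phi=(K-d)+(P-d)+P\ge\underline{f}$. Your extra justifications (using $0\in Y$ for $h_Y\ge 0$ and the minimax identity for $P\ge d$) are just more explicit versions of what the paper leaves implicit.
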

\begin{proof}
By the definition of $d(\cdot)$ and $P(\cdot)$, we have
\begin{align}
K(x, z; y)\ge d(y, z), ~ P(z)\ge d(y, z), \text{~and~} P(z)\ge \underline{f}, \notag
\end{align}
which follows that $\Phi(x, y, z)\geq P(z) \geq \underline{f}$.
\end{proof}

Next, we present some basic \textit{error bounds} that can be derived from convexity of subproblems. 
\begin{lemma}\label{lemma: primal eb}
	Suppose Assumption \ref{assumption: basic assumption} holds.  Define constants 
	\begin{align} \notag 
		\kappa_1 := \frac{p}{\mu_K}, ~ \kappa_2 := \frac{K_h}{\mu_K}, \text{~and~} \kappa_3:=	1 + \frac{1}{c \mu_K}. 
	\end{align}
	Then for any $y, y' \in Y$ and $z, z' \in X$, it holds that 
\begin{align}
 	\|x(y, z)-x(y, z')\|\le & \kappa_1\|z-z'\|, \label{eb1}\\
	\|x(z)-x(z')\|\le &\kappa_1\|z-z'\|, \label{eb11}\\
	\|x(y, z)-x(y', z)\|\le & \kappa_2\|y-y'\|\label{eb2}.
\end{align}
In addition, for any $t \in \N$, we have 
\begin{align}	
	\|x^{t+1}-x(y^{t}, z^t)\|\le & \kappa_3\|x^t-x^{t+1}\|, \label{eb3} \\
    \|y^{t+1}-y^t_+(z^t)\|  \leq & \alpha K_h\kappa_3 \|x^t-x^{t+1}\|.\label{eb4}
\end{align}
\end{lemma}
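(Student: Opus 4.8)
Looking at Lemma \ref{lemma: primal eb}, the plan is to derive all five error bounds from the strong convexity of the relevant subproblems, exploiting that each of the quantities $x(y,z)$, $x(z)$, $x^{t+1}$ is a projected-gradient fixed point or exact minimizer of a strongly convex function over $X$.

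\textbf{First, the perturbation bounds \eqref{eb1}, \eqref{eb11}, \eqref{eb2}.} Recall $K(\cdot,z;y)$ is $\mu_K$-strongly convex with $\mu_K = p - L_f$. For \eqref{eb1}, I would write the first-order optimality conditions for $x(y,z)$ and $x(y,z')$ as variational inequalities over $X$, add them, and use strong monotonicity of $\nabla_x K(\cdot,z;y)$ together with the fact that the $z$-dependence enters only through the term $\tfrac{p}{2}\|x-z\|^2$, whose gradient is $p(x-z)$; the cross term gives $p\langle z-z', x(y,z)-x(y,z')\rangle$, and Cauchy–Schwarz yields $\mu_K\|x(y,z)-x(y,z')\|^2 \le p\|z-z'\|\,\|x(y,z)-x(y,z')\|$, i.e. the constant $\kappa_1 = p/\mu_K$. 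Bound \eqref{eb11} is the identical argument applied to the strongly convex function $f(x)+h_Y(x)+\tfrac p2\|x-z\|^2$ defining $x(z)$ (note $h_Y$ is convex, so the modulus is still $\mu_K$). For \eqref{eb2}, I compare $x(y,z)$ and $x(y',z)$: now the perturbation is in $y$, entering through $\langle y, h(x)\rangle$, whose gradient is $\nabla h(x)^\top y$; the cross term is $\langle (\nabla h(x(y,z)))^\top(y-y'), x(y,z)-x(y',z)\rangle$, bounded by $K_h\|y-y'\|\,\|x(y,z)-x(y',z)\|$ using $\|\nabla h(x)\|\le K_h$ on $X$, giving $\kappa_2 = K_h/\mu_K$.

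\textbf{Second, the iterate bounds \eqref{eb3}, \eqref{eb4}.} For \eqref{eb3}, both $x^{t+1} = \Pi_X(x^t - c\nabla_x K(x^t,z^t;y^t))$ and $x(y^t,z^t) = \Pi_X(x(y^t,z^t) - c\nabla_x K(x(y^t,z^t),z^t;y^t))$ are fixed points / images of the nonexpansive map $\Pi_X$. Subtracting and using nonexpansiveness of $\Pi_X$:
\begin{align*}
\|x^{t+1} - x(y^t,z^t)\| \le \|x^t - x(y^t,z^t) - c(\nabla_x K(x^t,z^t;y^t) - \nabla_x K(x(y^t,z^t),z^t;y^t))\|.
\end{align*}
Then I add and subtract $x^t$ inside, using the triangle inequality to split off $\|x^t - x^{t+1}\|$-type terms — more precisely, bound $\|x^{t+1}-x(y^t,z^t)\| \le \|x^{t+1}-x^t\| + \|x^t - x(y^t,z^t) - c(\cdots)\|$ and then control the second piece. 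Alternatively, and more cleanly, use the standard contraction estimate for the projected-gradient operator on a $\mu_K$-strongly-convex, $(1/c)$-smooth-compatible objective to get $\|x^{t+1}-x(y^t,z^t)\| \le \|x^t - x(y^t,z^t)\| \le \|x^t - x^{t+1}\| + \|x^{t+1}-x(y^t,z^t)\|$; rearranging with the contraction factor produces the constant $\kappa_3 = 1 + \tfrac{1}{c\mu_K}$. (The term $1/(c\mu_K)$ is exactly what one gets from comparing the step-$c$ gradient map to the exact minimizer.) Finally \eqref{eb4}: by definition $y^{t+1} = \Pi_Y(y^t + \alpha h(x^{t+1}))$ and $y^t_+(z^t) = \Pi_Y(y^t + \alpha h(x(y^t,z^t)))$; nonexpansiveness of $\Pi_Y$ gives $\|y^{t+1}-y^t_+(z^t)\| \le \alpha\|h(x^{t+1}) - h(x(y^t,z^t))\| \le \alpha K_h \|x^{t+1}-x(y^t,z^t)\| \le \alpha K_h \kappa_3 \|x^t-x^{t+1}\|$, where the last step is \eqref{eb3}.

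\textbf{Main obstacle.} The delicate point is \eqref{eb3}: getting the precise constant $\kappa_3 = 1+\tfrac{1}{c\mu_K}$ rather than a looser bound requires care in how one handles the non-contractive behavior near the boundary of $X$ and the fact that $c$ need not equal $1/L$ for the ``right'' smoothness constant $L$. The clean route is to note that $x \mapsto \Pi_X(x - c\nabla_x K(x,z;y))$ has the exact minimizer $x(y,z)$ as fixed point, and that for a $\mu_K$-strongly convex function the displacement $\|x^t - x^{t+1}\|$ (the projected-gradient residual) dominates the distance to the optimum up to the factor $(1 + \tfrac{1}{c\mu_K})$; this is a known error bound for the projected gradient map and I would cite or reproduce its short proof. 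Everything else is routine strong-convexity/nonexpansiveness bookkeeping.
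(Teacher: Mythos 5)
Your proposal is correct and, for the two bounds the paper actually proves in this lemma, essentially equivalent to the paper's argument. The paper cites \cite[Lemma 3.10]{zhang2020proximal} for \eqref{eb1}, \eqref{eb11}, and \eqref{eb3} and only writes out \eqref{eb2} and \eqref{eb4}: your \eqref{eb4} step (nonexpansiveness of $\Pi_Y$, $K_h$-Lipschitzness of $h$, then \eqref{eb3}) is identical to the paper's, and for \eqref{eb2} you use the variational-inequality/strong-monotonicity form of the two optimality conditions with the cross term $\nabla h(x)^\top(y-y')$ bounded via $\|\nabla h(x)^\top(y-y')\|\le K_h\|y-y'\|$, whereas the paper adds two function-value strong-convexity inequalities and exploits linearity of $K(x,z;\cdot)$ to obtain the cross term $\langle h(x(y,z))-h(x(y',z)),y'-y\rangle$, bounded by the $K_h$-Lipschitzness of $h$; both routes give the same $\kappa_2=K_h/\mu_K$. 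Your VI derivations of \eqref{eb1} and \eqref{eb11} are fine as well (for \eqref{eb11} note that $h_Y$ is nonsmooth, so the optimality conditions should be phrased with subgradients, but the modulus $\mu_K$ is unchanged).

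The one soft spot is \eqref{eb3}. The displayed chain $\|x^{t+1}-x(y^t,z^t)\|\le\|x^t-x(y^t,z^t)\|\le\|x^t-x^{t+1}\|+\|x^{t+1}-x(y^t,z^t)\|$ is vacuous as literally written, and inserting the natural contraction factor $\sqrt{1-c\mu_K}$ of the projected-gradient map and rearranging yields a constant on the order of $2/(c\mu_K)$, not the claimed $1+\tfrac{1}{c\mu_K}$. The clean route is the one you gesture at in your final paragraph: first establish $\|x^t-x(y^t,z^t)\|\le\tfrac{1}{c\mu_K}\|x^t-x^{t+1}\|$ (the projected-gradient residual dominates the distance to the minimizer of the $\mu_K$-strongly convex subproblem) and then apply the triangle inequality once. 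That intermediate bound is precisely the content of the cited Zhang--Luo lemma, and since you, like the paper, ultimately defer to that reference, I would not count this as a substantive gap---but if you reproduce the proof rather than cite it, you should carry out that step carefully rather than rely on the contraction estimate.
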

\begin{proof}
The proofs of \eqref{eb1}, \eqref{eb11}, and \eqref{eb3} can be found in \cite[Lemma 3.10]{zhang2020proximal} and are hence omitted. We first prove \eqref{eb2}. By the strong convexity of $K(\cdot, z; y)$ in $x$, we have
\begin{align*}
	K(x(y', z), z; y)-K(x(y, z), z; y)   \ge & \frac{\mu_K}{2}\|x(y, z)-x(y', z)\|^2,\\
	K(x(y, z), z; y')-K(x(y', z), z; y') \ge & \frac{\mu_K}{2}\|x(y, z)-x(y', z)\|^2.
\end{align*}
By the linearity of $K(x, z;\cdot)$ in $y$, we have
\begin{align*}
	K(x(y, z), z; y')-K(x(y, z), z; y)   = &  \langle h(x(y, z)), y'-y\rangle, \\
	K(x(y', z), z; y)-K(x(y', z), z; y') = & \langle h(x(y', z)), y-y'\rangle.
\end{align*}
Combining the above (in)equalities, we have
\begin{align*}
\mu_K\|x(y, z)-x(y', z)\|^2\le\langle h(x(y, z))-h(x(y', z)), y'-y\rangle \le K_h \|x(y', z)-x(y, z)\|\|y-y'\|,
\end{align*}
where the last inequality uses the Cauchy-Schwarz inequality and the Lipschitz continuity of $h(x)$. Hence \eqref{eb2} holds with the claimed $\kappa_2$. It remains to prove \eqref{eb4}:
\begin{align*}
    \|y^{t+1}-y^t_+(z^t)\| =& \|\Pi_Y(y^t+\alpha h(x^{t+1})) - \Pi_Y(y^t+\alpha h(x(y^t, z^t)))\|\\
    \le&\alpha \|h (x^{t+1}) - h(x(y^t, z^t))\| \leq \alpha K_h\kappa_3 \|x^{t} - x^{t+1}\|. 
\end{align*} 
where the first inequality is due to the non-expansiveness of the projection operator, the second inequality is due to $h$ being $K_h$-Lipschitz, and the last inequality is due to \eqref{eb3}. 
\end{proof}

The following lemma immediately follows from Lemma \ref{lemma: primal eb}.
\begin{lemma}\label{Lipschitz-dual}
Suppose Assumption \ref{assumption: basic assumption} holds.  The dual function $d(y, z)$ is a Lipschitz differentiable in $y$ with modulus $K_h \kappa_2$, i.e., $\|\nabla_y d(y,z)-\nabla_y d(y',z)\|\le K_h \kappa_2 \|y-y'\|, \  \forall y, y'\in Y.$
\end{lemma}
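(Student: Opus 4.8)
## Proof Plan for the Main Theorem

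\textbf{Proof plan.} The plan is to combine the envelope-type identity for $\nabla_y d(y,z)$ already recorded among the useful notations with the primal error bound \eqref{eb2} of Lemma \ref{lemma: primal eb}. Recall that for every $(y,z)\in Y\times X$ the inner subproblem $\min_{x\in X}K(x,z;y)$ is strongly convex in $x$ with modulus $\mu_K$, so it admits the unique minimizer $x(y,z)\in X$, and moreover $\nabla_y d(y,z)=\nabla_y K(x(y,z),z;y)=h(x(y,z))$. Hence estimating $\|\nabla_y d(y,z)-\nabla_y d(y',z)\|$ amounts to estimating $\|h(x(y,z))-h(x(y',z))\|$.

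The key steps are then the following. First, since both $x(y,z)$ and $x(y',z)$ lie in $X$, Assumption \ref{assumption: basic assumption} (and the Lipschitz estimates for $h$ over $X$ recorded after \eqref{eq: problem constants}) gives $\|h(x(y,z))-h(x(y',z))\|\le K_h\|x(y,z)-x(y',z)\|$. Second, because the two minimizers being compared share the same proximal center $z$, inequality \eqref{eb2} of Lemma \ref{lemma: primal eb} applies and yields $\|x(y,z)-x(y',z)\|\le \kappa_2\|y-y'\|$ with $\kappa_2=K_h/\mu_K$. Chaining these two bounds gives $\|\nabla_y d(y,z)-\nabla_y d(y',z)\|\le K_h\kappa_2\|y-y'\|$ for all $y,y'\in Y$, which is exactly the claim. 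There is no substantive obstacle here — the statement is an immediate corollary of Lemma \ref{lemma: primal eb} once the identity $\nabla_y d(y,z)=h(x(y,z))$ is invoked — and the only point needing (minor) care is to keep $z$ fixed while varying $y$, so that \eqref{eb2}, rather than \eqref{eb1}, is the error bound being used.
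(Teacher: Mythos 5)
Your proposal is correct and follows essentially the same route as the paper: invoke the Danskin-type identity $\nabla_y d(y,z)=h(x(y,z))$, apply the $K_h$-Lipschitz continuity of $h$ over $X$, and then the error bound \eqref{eb2} with the proximal center $z$ held fixed. No gaps.
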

\begin{proof}
By Danskin's theorem in convex analysis, we know that $d(y,z)$ is a differentiable function with $\nabla_yd(y, z)=\nabla_yK(x(y, z), z; y)=h(x(y,z))$. Moreover, for any $y, y'\in Y$, we have
\begin{align*}
	\|\nabla_y d(y,z) - \nabla_y d(y', z)\| = & \|h(x(y,z)) - h(x(y',z))\| \leq K_h\|x(y,z) - x(y', z)\| \leq K_h \kappa_2 \|y - y'\|,
\end{align*}
where the inequalities are due to $h$ being Lipschitz and \eqref{eb2}. 
\end{proof}

In the next three lemmas, we establish primal descent, dual ascent, and proximal descent properties of the proposed method. 
\begin{lemma}[Primal Descent] \label{lemma: primal descent}
Suppose Assumption \ref{assumption: basic assumption} holds, and choose $$0 < c \leq  \frac{1}{L_f + L_h(Y) + p}.$$ 
For any $t \in \N$, we have
\begin{align*} 
	K(x^t, z^t; y^t)-K(x^{t+1}, z^{t+1}; y^{t+1}) \geq  \frac{1}{2c}\|x^t-x^{t+1}\|^2+ \langle h(x^{t+1}), y^t - y^{t+1}\rangle +\frac{p}{2\beta}\|z^t-z^{t+1}\|^2.
\end{align*}
\end{lemma}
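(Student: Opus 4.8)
\textbf{Proof proposal for Lemma \ref{lemma: primal descent} (Primal Descent).}

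The plan is to decompose the one-step change of $K$ along the three updates (primal step on $x$, dual step on $y$, proximal step on $z$) and bound each piece. First I would write
\[
K(x^t, z^t; y^t) - K(x^{t+1}, z^{t+1}; y^{t+1})
= \underbrace{\big(K(x^t, z^t; y^t) - K(x^{t+1}, z^t; y^t)\big)}_{\text{(I): primal}}
+ \underbrace{\big(K(x^{t+1}, z^t; y^t) - K(x^{t+1}, z^t; y^{t+1})\big)}_{\text{(II): dual}}
+ \underbrace{\big(K(x^{t+1}, z^t; y^{t+1}) - K(x^{t+1}, z^{t+1}; y^{t+1})\big)}_{\text{(III): prox}}.
\]
For term (II), since $K(x, z; \cdot)$ is linear in $y$ with gradient $h(x)$, we get exactly $\langle h(x^{t+1}), y^t - y^{t+1}\rangle$, which is the cross term appearing in the claim. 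For term (III), $K(x^{t+1}, z; y^{t+1}) = f(x^{t+1}) + \langle y^{t+1}, h(x^{t+1})\rangle + \frac{p}{2}\|x^{t+1} - z\|^2$ is a quadratic in $z$; a direct computation using $z^{t+1} = z^t + \beta(x^{t+1} - z^t)$ gives $\frac{p}{2}(\|x^{t+1}-z^t\|^2 - \|x^{t+1}-z^{t+1}\|^2) = \frac{p}{2}\langle z^{t+1}-z^t, 2x^{t+1} - z^t - z^{t+1}\rangle$; using $x^{t+1} - z^t = \frac{1}{\beta}(z^{t+1}-z^t)$ this simplifies to $\frac{p}{2}(\frac{2}{\beta}-1)\|z^{t+1}-z^t\|^2 \geq \frac{p}{2\beta}\|z^t - z^{t+1}\|^2$ (since $\beta \leq 1$, in fact $\beta \le \frac{1}{24\kappa_1} < 1$).

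The main work is term (I), the primal descent. Here I would use that $x^{t+1} = \Pi_X(x^t - c\nabla_x K(x^t, z^t; y^t))$ is the minimizer over $X$ of the quadratic model $q(x) := K(x^t, z^t; y^t) + \langle \nabla_x K(x^t, z^t; y^t), x - x^t\rangle + \frac{1}{2c}\|x - x^t\|^2$, so by strong convexity of $q$ (modulus $1/c$) and optimality, $q(x^t) - q(x^{t+1}) \geq \frac{1}{2c}\|x^t - x^{t+1}\|^2$, i.e.
\[
-\langle \nabla_x K(x^t, z^t; y^t), x^{t+1} - x^t\rangle - \frac{1}{2c}\|x^{t+1}-x^t\|^2 \geq \frac{1}{2c}\|x^t - x^{t+1}\|^2.
\]
On the other hand, $x \mapsto K(x, z^t; y^t)$ has $L$-Lipschitz gradient with $L = L_f + L_h(Y) + p$ (sum of the moduli of $f$, of $h_{y^t}$ with $y^t \in Y$, and of the proximal term), so the descent lemma gives $K(x^{t+1}, z^t; y^t) \leq K(x^t, z^t; y^t) + \langle \nabla_x K(x^t, z^t; y^t), x^{t+1}-x^t\rangle + \frac{L}{2}\|x^{t+1}-x^t\|^2$. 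Combining the two and using $c \leq 1/L$ yields term (I) $\geq \frac{1}{2c}\|x^t - x^{t+1}\|^2 + (\frac{1}{2c} - \frac{L}{2})\|x^{t+1}-x^t\|^2 \geq \frac{1}{2c}\|x^t-x^{t+1}\|^2$. Adding the three bounds gives the claim.

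The only subtle point — the step I expect to need the most care — is verifying that the Lipschitz constant of $\nabla_x K(\cdot, z^t; y^t)$ is indeed at most $L_f + L_h(Y) + p$ uniformly in $t$: this needs $y^t \in Y$ (so that $\nabla^2 h_{y^t}$ is controlled by $L_h(Y) = \sqrt{m}L_h B$ as defined in the excerpt) and the fact that the algorithm keeps $y^t \in Y$ by the projection in line \ref{alg: dual}. Everything else is a routine quadratic manipulation, so I would keep those computations terse.
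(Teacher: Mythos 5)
Your proposal is correct and follows essentially the same route as the paper: the identical three-term telescoping decomposition into primal, dual, and proximal differences, with the primal piece handled by the standard projected-gradient descent estimate under $c \leq 1/(L_f + L_h(Y) + p)$. You merely fill in details the paper leaves implicit (the descent-lemma computation and the fact that the proximal inequality needs $\beta \leq 1$, which holds since $\beta \leq \tfrac{1}{24\kappa_1}$ and $\kappa_1 > 1$), so no substantive difference.
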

\begin{proof}
Notice that the step of updating $x$ is a standard gradient projection, hence we have
\begin{align}
	K(x^t, z^t; y^t)-K(x^{t+1}, z^t; y^t)\ge\frac{1}{2c}\|x^t-x^{t+1}\|^2. \notag 
\end{align}
Next, by definition of $\nabla_yK(x, z;y)$,   we have
\begin{align}
 K(x^{t+1}, z^t; y^t)-K(x^{t+1}, z^t; y^{t+1}) = \langle h(x^{t+1}), y^t - y^{t+1}\rangle. \notag 
\end{align}
Based on the update of variable $z^{t+1}$, i.e. $z^{t+1}=z^t+\beta(x^{t+1}-z^t)$, it is easy to show that
\begin{align}
K(x^{t+1}, z^t; y^{t+1})-K(x^{t+1}, z^{t+1}; y^{t+1})\ge \frac{p}{2\beta}\|z^t-z^{t+1}\|^2. \notag 
\end{align}
Combining the above three inequalities completes the proof. 
\end{proof}

\begin{lemma}[Dual Ascent]\label{lemma: dual ascent}
Suppose Assumption \ref{assumption: basic assumption} holds. For any $t \in \N$, we have
\begin{align*}
d(y^{t+1}, z^{t+1})-d(y^t, z^t) \ge &  \langle \nabla_yd(y^t, z^t), y^{t+1}-y^t\rangle-\frac{K_h \kappa_2 }{2}\|y^{t+1}-y^t\|^2 \notag \\
&+\frac{p}{2}\langle z^{t+1}-z^t, z^{t+1}+z^t-2x(y^{t+1}, z^{t+1})\rangle. 
\end{align*}
\end{lemma}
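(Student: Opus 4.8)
The plan is to exploit the strong concavity (in a suitable sense) of $d(\cdot,\cdot)$ separately in the two variables $y$ and $z$, using the Lipschitz-differentiability in $y$ established in Lemma \ref{Lipschitz-dual} and the explicit quadratic structure of $K$ in $z$. First I would split the increment $d(y^{t+1},z^{t+1})-d(y^t,z^t)$ into a $y$-step and a $z$-step by inserting the intermediate point $d(y^{t+1},z^t)$:
\[
d(y^{t+1},z^{t+1})-d(y^t,z^t) = \big(d(y^{t+1},z^t)-d(y^t,z^t)\big) + \big(d(y^{t+1},z^{t+1})-d(y^{t+1},z^t)\big).
\]

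For the $y$-step: since $d(\cdot,z^t)$ is concave in $y$ with $K_h\kappa_2$-Lipschitz gradient (Lemma \ref{Lipschitz-dual}), the descent lemma applied to the concave function $-d(\cdot,z^t)$ yields
\[
d(y^{t+1},z^t)-d(y^t,z^t) \ge \langle \nabla_y d(y^t,z^t), y^{t+1}-y^t\rangle - \frac{K_h\kappa_2}{2}\|y^{t+1}-y^t\|^2,
\]
which is exactly the first line of the claimed inequality.

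For the $z$-step: I would use that $d(y^{t+1},z) = \min_{x\in X} K(x,z;y^{t+1})$ and that $z\mapsto K(x,z;y^{t+1})$ is, for fixed $x$, a simple quadratic $f(x)+\langle y^{t+1},h(x)\rangle + \tfrac{p}{2}\|x-z\|^2$. Evaluating at the minimizers $x(y^{t+1},z^t)$ and $x(y^{t+1},z^{t+1})$ respectively and using the minimality of $d$ at $z^{t+1}$ (i.e. $d(y^{t+1},z^{t+1}) = K(x(y^{t+1},z^{t+1}),z^{t+1};y^{t+1})$ while $d(y^{t+1},z^t) \le K(x(y^{t+1},z^{t+1}),z^t;y^{t+1})$ is the wrong direction, so instead I would compare $d(y^{t+1},z^{t+1}) = K(x(y^{t+1},z^{t+1}),z^{t+1};y^{t+1}) \ge$ ... ). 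More cleanly: write $d(y^{t+1},z^{t+1})-d(y^{t+1},z^t) \ge K(x(y^{t+1},z^{t+1}),z^{t+1};y^{t+1}) - K(x(y^{t+1},z^{t+1}),z^t;y^{t+1})$, since the left term equals the first and the right term $d(y^{t+1},z^t)\le K(x(y^{t+1},z^{t+1}),z^t;y^{t+1})$. The remaining difference involves only the proximal term $\tfrac{p}{2}\|x(y^{t+1},z^{t+1})-z^{t+1}\|^2 - \tfrac{p}{2}\|x(y^{t+1},z^{t+1})-z^t\|^2$, which by the polarization identity $\|a\|^2-\|b\|^2 = \langle a-b, a+b\rangle$ equals $\tfrac{p}{2}\langle z^t-z^{t+1}, 2x(y^{t+1},z^{t+1})-z^t-z^{t+1}\rangle = \tfrac{p}{2}\langle z^{t+1}-z^t, z^{t+1}+z^t-2x(y^{t+1},z^{t+1})\rangle$. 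This is the second line of the claim.

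The main obstacle is getting the inequality directions exactly right in the $z$-step: one must be careful that $d(y^{t+1},z^t)$ is an infimum, so replacing $x(y^{t+1},z^t)$ by the ``wrong'' minimizer $x(y^{t+1},z^{t+1})$ can only increase it, which gives the inequality in the desired direction; then the $x$-independent part of $K$ cancels and only the quadratic prox term survives exactly (no inequality loss there, it is an identity). Assembling the two steps and collecting terms yields the stated bound.
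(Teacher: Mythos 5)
Your proposal is correct and follows essentially the same route as the paper: the same splitting through the intermediate point $d(y^{t+1},z^t)$, the descent-lemma bound via the $K_h\kappa_2$-Lipschitz gradient of $d(\cdot,z^t)$ for the $y$-step, and the bound $d(y^{t+1},z^t)\le K(x(y^{t+1},z^{t+1}),z^t;y^{t+1})$ followed by the exact polarization identity for the prox term in the $z$-step. (A minor remark: concavity of $d$ in $y$ is not actually needed for the first line; the lower quadratic bound follows from the Lipschitz gradient alone.)
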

\begin{proof}
By the Lipschitz continuity of $d(y,z)$ in $y$ in Lemma \ref{Lipschitz-dual}, we have
\begin{align}
	d(y^{t+1},z^t) - d(y^t,z^t)&\geq \langle \nabla_y d(y^t,z^t),y^{t+1} - y^t \rangle-\frac{K_h \kappa_2 }{2}\|y^t-y^{t+1}\|^2. \notag 
\end{align}
Next, by definition of $K$, it holds that 
\begin{align*}
	d(y^{t+1},z^{t+1})-d(y^{t+1},z^t)&=K(x(y^{t+1},z^{t+1}),z^{t+1};y^{t+1})-K(x(y^{t+1},z^{t}),z^{t};y^{t+1})\\
		&\geq K(x(y^{t+1},z^{t+1}),z^{t+1};y^{t+1})-K(x(y^{t+1},z^{t+1}),z^{t};y^{t+1})\\
		&= \frac{p}{2}\|x(y^{t+1},z^{t+1})-z^{t+1}\|^2-\frac{p}{2}\|x(y^{t+1},z^{t+1})-z^{t}\|^2\\
		&= \frac{p}{2}\langle z^{t+1}-z^t, z^{t+1}+z^t-2x(y^{t+1}, z^{t+1})\rangle.	
\end{align*}
Combining the above two inequalities completes the proof. 
\end{proof}

\begin{lemma}[Proximal Descent]\label{lemma: proximal descent}
Suppose Assumption \ref{assumption: basic assumption} holds.  For any $t \in \N$, we have 
\begin{equation*}
P(z^{t})-P(z^{t+1})\ge \frac{p}{2}\langle z^t-z^{t+1}, z^t+z^{t+1}-2x({y}(z^{t+1}), z^t) \rangle
\end{equation*}
for any $y(z^{t+1}) \in Y(z^{t+1})$.
\end{lemma}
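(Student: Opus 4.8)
The plan is to mimic the proof of the Dual Ascent lemma (Lemma~\ref{lemma: dual ascent}), exploiting the fact that $P(z) = \min_{x\in X}\big(f(x)+h_Y(x)+\frac{p}{2}\|x-z\|^2\big)$ is itself the value function of a strongly convex minimization parameterized by the proximal center $z$, with $x(z)$ the unique minimizer. So $P$ plays exactly the role that $d(\cdot,z)$ played there, but now with the proximal center as the varying argument rather than the dual variable.

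First I would invoke Lemma~\ref{lemma: minimax thm}, specifically \eqref{eq: minimax2}, which tells us that for any $y(z^{t+1})\in Y(z^{t+1})$ we have $x(y(z^{t+1}),z^{t+1}) = x(z^{t+1})$; this is what lets us replace the (possibly inaccessible) minimizer $x(z^{t+1})$ of the max-min problem by $x(y(z^{t+1}),z^{t+1})$, which is a minimizer of the \emph{plain} Lagrangian $K(\cdot,z^{t+1};y(z^{t+1}))$ over $X$. Then I would write, using the definition $P(z)=f(x(z))+h_Y(x(z))+\frac{p}{2}\|x(z)-z\|^2$ and the suboptimality of $x(z^t)$ as a candidate for the problem defining $P(z^{t+1})$ — or more cleanly, the suboptimality of $x(z^{t+1})$ for the problem defining $P(z^t)$:
\begin{align*}
P(z^t) - P(z^{t+1}) \ge & \Big(f(x(z^{t+1})) + h_Y(x(z^{t+1})) + \tfrac{p}{2}\|x(z^{t+1})-z^t\|^2\Big) \\
 & - \Big(f(x(z^{t+1})) + h_Y(x(z^{t+1})) + \tfrac{p}{2}\|x(z^{t+1})-z^{t+1}\|^2\Big) \\
 = & \tfrac{p}{2}\Big(\|x(z^{t+1})-z^t\|^2 - \|x(z^{t+1})-z^{t+1}\|^2\Big).
\end{align*}
Here the inequality holds because $x(z^t)$ is the \emph{minimizer} of the $z^t$-problem so $P(z^t) \le f(x(z^{t+1}))+h_Y(x(z^{t+1}))+\frac{p}{2}\|x(z^{t+1})-z^t\|^2$; wait — that is the wrong direction, so instead I would bound $P(z^{t+1})$ from above by plugging $x(z^t)$ into the $z^{t+1}$-problem, giving $P(z^{t+1}) \le f(x(z^t))+h_Y(x(z^t))+\frac{p}{2}\|x(z^t)-z^{t+1}\|^2$, and combine with $P(z^t)=f(x(z^t))+h_Y(x(z^t))+\frac{p}{2}\|x(z^t)-z^t\|^2$ to obtain $P(z^t)-P(z^{t+1}) \ge \frac{p}{2}(\|x(z^t)-z^t\|^2-\|x(z^t)-z^{t+1}\|^2)$. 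Either one-sided estimate works; to land exactly on the stated symmetric inner-product form I expect one actually needs the cleaner route of expanding one of these differences of squares via the identity $\|a\|^2-\|b\|^2 = \langle a-b, a+b\rangle$ with $a = x(z^{t+1})-z^t$, $b = x(z^{t+1})-z^{t+1}$, so $a-b = z^{t+1}-z^t$ and $a+b = 2x(z^{t+1}) - z^t - z^{t+1}$, and then swap $x(z^{t+1})$ for $x(y(z^{t+1}),z^t)$.

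The subtle point — and the one I'd flag as the main obstacle — is the mismatch between $x(z^{t+1}) = x(y(z^{t+1}),z^{t+1})$ (which \eqref{eq: minimax2} gives us at the center $z^{t+1}$) and the $x({y}(z^{t+1}), z^{t})$ that appears in the claimed inequality (same dual multiplier, but center $z^t$). To bridge this I would use that $x(y(z^{t+1}),z^{t})$ is a feasible candidate for $\min_{x\in X}K(x,z^{t};y(z^{t+1}))$ and that, by the max-min structure, $\min_{x\in X}K(x,z^{t};y(z^{t+1})) = d(y(z^{t+1}),z^t) \le \max_{y\in Y}d(y,z^t) = P(z^t)$, while $\min_{x\in X}K(x,z^{t+1};y(z^{t+1})) = d(y(z^{t+1}),z^{t+1}) = P(z^{t+1})$ since $y(z^{t+1})\in Y(z^{t+1})$. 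Hence
\begin{align*}
P(z^t) - P(z^{t+1}) \ge & K(x(y(z^{t+1}),z^t), z^t; y(z^{t+1})) - K(x(y(z^{t+1}),z^t), z^{t+1}; y(z^{t+1})) \\
 = & \tfrac{p}{2}\|x(y(z^{t+1}),z^t)-z^t\|^2 - \tfrac{p}{2}\|x(y(z^{t+1}),z^t)-z^{t+1}\|^2,
\end{align*}
where the inequality uses $P(z^t)\ge d(y(z^{t+1}),z^t) = K(x(y(z^{t+1}),z^t),z^t;y(z^{t+1}))$ and $P(z^{t+1}) = d(y(z^{t+1}),z^{t+1}) \le K(x(y(z^{t+1}),z^t), z^{t+1}; y(z^{t+1}))$ (the latter since $x(y(z^{t+1}),z^t)$ is merely feasible, not optimal, for the $z^{t+1}$ subproblem), and the equality cancels the $f$ and $h_Y$ terms. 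Finally, applying $\|a\|^2-\|b\|^2=\langle a-b,a+b\rangle$ with $a = x(y(z^{t+1}),z^t)-z^t$ and $b = x(y(z^{t+1}),z^t)-z^{t+1}$ yields exactly $\frac{p}{2}\langle z^t - z^{t+1},\, z^t+z^{t+1}-2x(y(z^{t+1}),z^t)\rangle$, which is the claimed bound. The routine verification that $h_Y$ and $f$ genuinely cancel (they do, since both $K$-evaluations use the same $x$ and the same $y$, differing only in the quadratic proximal term) is the only computation, and it is immediate.
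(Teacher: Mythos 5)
Your final argument is exactly the paper's proof: use $P(z^{t+1})=d(y(z^{t+1}),z^{t+1})$ and $P(z^t)\ge d(y(z^{t+1}),z^t)$ from Lemma \ref{lemma: minimax thm}, bound the difference by evaluating $K(\cdot,\cdot;y(z^{t+1}))$ at the single point $x(y(z^{t+1}),z^t)$, and expand the difference of the two quadratic proximal terms. Your version is in fact slightly more careful than the paper's, which writes the step $d(y(z^{t+1}),z^{t+1}) \le K(x(y(z^{t+1}),z^t),z^{t+1};y(z^{t+1}))$ as an equality even though $x(y(z^{t+1}),z^t)$ is only feasible (not optimal) for the $z^{t+1}$-subproblem; the inequality direction is what matters and both arguments are correct.
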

\begin{proof}
By Lemma \ref{lemma: minimax thm}, we have 
\begin{align*}
    P(z^{t+1}) = & \max_{y \in Y}d(y, z^{t+1})  = d({y}(z^{t+1}), z^{t+1}), \\ 
    P(z^t) = & \max_{y \in Y}d(y, z^t) \geq d({y}(z^{t+1}), z^t).
\end{align*}
As a result, it holds that
\begin{align*}
    P(z^{t+1})-P(z^t)& \leq d({y}(z^{t+1}), z^{t+1})-d({y}(z^{t+1}), z^t) \\
    & = K(x({y}(z^{t+1}), z^t), z^{t+1}; {y}(z^{t+1}))-K(x({y}(z^{t+1}), z^t), z^t; {y}(z^{t+1}))\\
    & =\frac{p}{2}\langle  z^{t+1}-z^t, z^{t+1}+z^t-2x({y}(z^{t+1}), z^t)\rangle.
\end{align*}
This completes the proof.
\end{proof}

We are now ready to prove Lemma \ref{lemma: potential function}. 
\begin{proof}[Proof of Lemma \ref{lemma: potential function}]
Combining Lemmas \ref{lemma: primal descent}-\ref{lemma: proximal descent}, we have 
\begin{align*}
    \Phi^t - \Phi^{t+1}
		\geq &  \frac{1}{2c}\|x^t-x^{t+1}\|^2+ \langle h(x^{t+1}),  y^t - y^{t+1} \rangle+\frac{p}{2\beta}\|z^t-z^{t+1}\|^2	\\
		& + 2\langle \nabla_yd(y^t, z^t), y^{t+1}-y^t\rangle-{K_h \kappa_2 }\|y^{t+1}-y^t\|^2\\
		& +{p}\langle z^{t+1}-z^t, z^{t+1}+z^t-2x(y^{t+1}, z^{t+1})\rangle \\
		&  -{p}\langle z^{t+1}-z^t, z^{t+1}+ z^t -2x({y}(z^{t+1}), z^t) \rangle \\
		= & \frac{1}{2c}\|x^t-x^{t+1}\|^2+ \langle h(x^{t+1}),  y^{t+1}-y^t \rangle+\frac{p}{2\beta}\|z^t-z^{t+1}\|^2	\\
		& + 2\langle \nabla_yd(y^t, z^t) - h(x^{t+1}), y^{t+1}-y^t\rangle-{K_h \kappa_2 }\|y^{t+1}-y^t\|^2\\
		& +2{p}\langle z^{t+1}-z^t,  x({y}(z^{t+1}), z^t) - x(y^{t+1}, z^{t+1}) \rangle. 
\end{align*}

Next we bound different terms above separately. First,  since $y^{t+1} = \Pi_Y(y^t + \alpha h(x^{t+1}))$, we have 
\begin{align*}
    \frac{1}{2}\|y^{t+1} - y^t - \alpha h(x^{t+1})\|^2 \leq \frac{1}{2}\|\alpha h(x^{t+1})\|^2 - \frac{1}{2}\|y^t - y^{t+1}\|^2.
\end{align*}
Expanding both sides of the above inequality gives
\begin{align*}
    \langle h(x^{t+1}), y^{t+1} - y^t\rangle \geq \frac{1}{\alpha} \|y^t - y^{t+1}\|^2. 
\end{align*}
Next, it holds that 
\begin{align*}
    2\langle \nabla_yd(y^t, z^t) - h(x^{t+1}), y^{t+1}-y^t\rangle  \ge & -2\|h(x(y^t, z^t)) - h(x^{t+1})\| \| y^{t+1}-y^t\| \\
		\ge & -2K_h \|x^{t+1} - x(y^t, z^t) \| \|y^t-y^{t+1}\|\\
        \geq & -2 K_h \left( \frac{1}{2\kappa_3^2} \|x^{t+1} - x(y^t, z^t) \|^2  + \frac{\kappa_3^2}{2}\|y^t-y^{t+1}\|^2 \right) \\ 
        \geq & -K_h \|x^t - x^{t+1}\|^2 - K_h\kappa_3^2\|y^t-y^{t+1}\|^2,
\end{align*}
where the first inequality is due to the Cauchy-Schwarz inequality, the second inequality is due to $h$ being $K_h$-Lipschitz, the third inequality is due to the AM-GM inequality, and the last inequality is due to \eqref{eb3}. Also notice that
\begin{align*}
    & 2p\langle z^{t+1}-z^t, x({y}(z^{t+1}), z^t)-x(y^{t+1}, z^{t+1})\rangle \\
  = &2p \langle z^{t+1}-z^t, x({y}(z^{t+1}), z^t)-x({y}(z^{t+1}), z^{t+1})\rangle +2p\langle z^{t+1}-z^t, x({y}(z^{t+1}), z^{t+1})-x(y^{t+1}, z^{t+1}) \rangle \\
\ge & -2p\kappa_1\|z^{t+1}-z^t\|^2+2p \langle z^{t+1}-z^t, x({y}(z^{t+1}), z^{t+1})-x(y^{t+1}, z^{t+1}) \rangle \\
\ge & -2p\kappa_1\|z^{t+1}-z^t\|^2-\frac{p}{6\beta}\|z^{t+1}-z^t\|^2-6p\beta\|x({y}(z^{t+1}), z^{t+1})-x(y^{t+1}, z^{t+1})\|^2, \\
 = & -2p\kappa_1\|z^{t+1}-z^t\|^2-\frac{p}{6\beta}\|z^{t+1}-z^t\|^2-6p\beta\|x(z^{t+1})-x(y^{t+1}, z^{t+1})\|^2,
\end{align*}
where the first inequality is due to the Cauchy-Schwarz inequality and \eqref{eb1}, the second inequality is due to the AM-GM inequality, and the last inequality is due to \eqref{eq: minimax2} in Lemma \ref{lemma: minimax thm}. Combining the previous three inequalities, we have
\begin{align}
    \Phi^t - \Phi^{t+1} \geq & \left( \frac{1}{2c} - K_h\right) \|x^t - x^{t+1}\|^2 +\left( \frac{1}{\alpha} - K_h\kappa_3^2 -K_h \kappa_2  \right) \|y^t - y^{t+1}\|^2  \notag \\
         & + \left( \frac{p}{3\beta} - 2p\kappa_1 \right) \|z^{t} - z^{t+1}\|^2 - 6p\beta\|x( z^{t+1})-x(y^{t+1}, z^{t+1})\|^2. \notag 
\end{align}
Since 
\begin{align*}
    \alpha \leq \frac{3}{4(K_h\kappa_3^2 -K_h \kappa_2 )}, ~\beta \leq \frac{1}{24\kappa_1}, \text{~and~} c \leq \frac{1}{4K_h}
\end{align*}
it is straightforward to check that 
\begin{align}
     \Phi^t - \Phi^{t+1} \geq &  \frac{1}{4c} \|x^t - x^{t+1}\|^2  + \frac{1}{4\alpha} \|y^t - y^{t+1}\|^2 + \frac{p}{4\beta}\|z^t -z^{t+1}\|^2 \notag \\
     & - 6p\beta\|x(z^{t+1})-x(y^{t+1}, z^{t+1})\|^2.  \label{eq: Phi_descent_1}
\end{align}
By \eqref{eb4}, we know $\|y^{t+1} - y^t_+(z^t)\| \leq \alpha K_h \sigma_3\|x^t - x^{t+1}\|$
\begin{align}\label{eq: diff_y_lb}
    \|y^t - y^{t+1}\|^2 = & \|y^t - y^t_+(z^t) + y^t_+(z^t) - y^{t+1} \|^2  \notag \\
     \geq & \frac{1}{2}\|y^t - y^t_+(z^t)\|^2 - \|y^t_+(z^t) - y^{t+1}\|^2 \notag \\
    \geq & \frac{1}{2}\|y^t - y^t_+(z^t)\|^2 - (\alpha K_h \sigma_3)^2\|x^t-x^{t+1}\|^2.
\end{align}
Using the error bounds derived in Lemma \ref{lemma: primal eb}, we have 
\begin{align}\label{eq: diff_x_ub}
    & \|x(z^{t+1})-x(y^{t+1}, z^{t+1})\|^2 \notag \\
	= & \|(x(z^{t+1})-x(z^{t}))+(x(z^t)-x(y^t_+(z^t), z^t))+(x(y^t_+(z^t), z^t)-x(y^{t+1}, z^t)) \\
 &+(x(y^{t+1}, z^t)-x(y^{t+1}, z^{t+1}))\|^2 \notag \\
	\le & 4\|x(z^{t+1})-x(z^{t})\|^2+4\|x(z^t)-x(y^t_+(z^t), z^t)\|^2+4\|x(y^t_+(z^t), z^t)\\
 &-x(y^{t+1}, z^t)\|^2+4\|x(y^{t+1}, z^t)-x(y^{t+1}, z^{t+1})\|^2 \notag \\
	\leq &  4\kappa_1^2\|z^t-z^{t+1}\|^2+4\|x(z^t)-x(y^t_+(z^t), z^t)\|^2 +4\kappa_2^2 \|y^t_+(z^t) -y^{t+1} \|+4\kappa_1^2\|z^t-z^{t+1}\|^2 \notag \\
	\leq & 8\kappa_1^2\|z^t-z^{t+1}\|^2+4\|x(z^t)-x(y^t_+(z^t), z^t)\|^2 + 4\kappa_2^2 \|x^t - x^{t+1}\|^2. 
\end{align}
Substituting \eqref{eq: diff_y_lb} and \eqref{eq: diff_x_ub} into \eqref{eq: Phi_descent_1}, we have 
\begin{align}
    \Phi^t - \Phi^{t+1} \geq & \left( \frac{1}{4c} 
    - \frac{\alpha^2 K_h^2 \kappa_3^2}{4\alpha} - 24 p\beta \kappa_2^2\right)\|x^t - x^{t+1}\|^2 +  \frac{1}{8\alpha} \|y^t - y_+^t(z^t)\|^2 \notag  \\
    & + \left( \frac{p}{4\beta} - 48 p\beta \kappa_1^2\right) \|z^t - z^{t+1}\|^2 - 24 p \beta \|x(z^t)-x(y^t_+(z^t), z^t)\|^2.  \label{eq: Phi_descent_2}
\end{align}
Since we also choose 
\begin{align}
    \alpha \leq \frac{1}{4cK_h^2\kappa_3^2}, ~\beta \leq \frac{1}{24\kappa_1},  \text{~and~} c\leq \frac{\kappa_1}{16 p \kappa_2^2}, \notag 
\end{align}
it holds that 
\begin{align}
    \frac{\alpha^2 K_h^2 \kappa_3^2}{4\alpha} +  24 p\beta \kappa_2^2 \leq \frac{1}{8c} \text{~and~} 48 \beta \kappa_1^2 \leq \frac{1}{8 \beta}.\notag 
\end{align}
As a result, \eqref{eq: Phi_descent_2} can be reduced to 
\begin{align}
    \Phi^t - \Phi^{t+1} \geq & \frac{1}{8c} \|x^t - x^{t+1}\|^2 + \frac{1}{8 \alpha} \|y^t - y_+^t(z^t)\|^2  +\frac{p}{8\beta} \|z^t - z^{t+1}\|^2 - 24 p \beta \|x(z^t)-x(y^t_+(z^t), z^t)\|^2. \notag 
\end{align}
This completes the proof.
\end{proof}

\section{Proof of Proposition \ref{prop: strong eb}} \label{sec: proof of strong eb}
First, we reduce the dual error bound to a perturbation bound.
Let 
\begin{align}\label{eq: perturbed min-max}
	x^*(r,z) 
             = & \argmin_{x\in X} f(x)+  \left( \max_{y \in Y}\langle y, h(x)-r\rangle \right) + \frac{p}{2}\|x-z\|^2, \\
    y^*(r,z) \in & \Argmax_{y \in Y} ~ \langle y, h(x^*(r,z) )-r\rangle.
\end{align}
The next lemma establishes the equivalence between $x(y_+(z), z)$ and $x^*(r, z)$ for some proper $r \in \R^m$. 
\begin{lemma}\label{lemma: perturbation r}
Given $(z, y) \in X\times Y$, let 
\begin{align}
\tilde{r} :=\frac{y_+(z)}{\alpha}+ h(x(y_+(z),z))- \frac{y}{\alpha}- h(x(y,z)). \label{eq: perturbation_r}
\end{align}
Then we have $x^*(\tilde{r},z)=x(y_+(z),z).$
\end{lemma}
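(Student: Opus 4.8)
The plan is to recognize the perturbed problem defining $x^*(\tilde r,z)$ as a convex--concave min--max problem and to exhibit an explicit saddle point. Concretely, consider the function
$$\Psi(x,u) := f(x) + \langle u,\, h(x)-\tilde r\rangle + \tfrac{p}{2}\|x-z\|^2 \qquad \text{on } X\times Y.$$
For fixed $u\in Y$ this is strongly convex in $x$ (with modulus $\mu_K=p-L_f$, since each $\langle u,h(\cdot)\rangle$ with $u\ge 0$ is convex and $f+\tfrac p2\|\cdot-z\|^2$ is strongly convex because $p>L_f$), it is linear in $u$, and $X,Y$ are compact convex; hence (as in Lemma \ref{lemma: minimax thm}) a saddle point exists, and $x\mapsto\max_{u\in Y}\Psi(x,u)$ has a unique minimizer, namely $x^*(\tilde r,z)$, which by standard saddle-point theory is the primal component of \emph{any} saddle point of $\Psi$. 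So it suffices to show that $\bigl(x(y_+(z),z),\,y_+(z)\bigr)$ is a saddle point of $\Psi$.

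First I would verify the inner minimization. Since the term $-\langle u,\tilde r\rangle$ is independent of $x$, we have $\Psi(x,y_+(z))=K(x,z;y_+(z))-\langle y_+(z),\tilde r\rangle$, so $\arg\min_{x\in X}\Psi(x,y_+(z))=\arg\min_{x\in X}K(x,z;y_+(z))=x(y_+(z),z)$, which is immediate from the definition of $x(\cdot,\cdot)$.

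The substantive step is the inner maximization, and it is here that the hypercube structure $Y=[0,B]^m$ is essential. Writing $w:=x(y_+(z),z)$ and plugging in the definition \eqref{eq: perturbation_r} of $\tilde r$, a short computation collapses the middle terms and yields
$$h(w)-\tilde r=\tfrac1\alpha\bigl(v-\Pi_Y(v)\bigr),\qquad v:=y+\alpha h(x(y,z)),$$
using $y_+(z)=\Pi_Y(v)$. Now I would run the coordinatewise case analysis of the projection onto the box: for each $i$, either $v_i>B$ (then $(h(w)-\tilde r)_i>0$ and $(y_+(z))_i=B$), or $v_i<0$ (then $(h(w)-\tilde r)_i<0$ and $(y_+(z))_i=0$), or $v_i\in[0,B]$ (then $(h(w)-\tilde r)_i=0$ and $(y_+(z))_i=v_i\in[0,B]$). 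In all three cases the sign of $(h(w)-\tilde r)_i$ is matched by the position of $(y_+(z))_i$ in $[0,B]$, which is exactly the optimality condition for $y_+(z)\in\Argmax_{u\in Y}\langle u,\,h(w)-\tilde r\rangle=\Argmax_{u\in Y}\Psi(w,u)$.

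Combining the two items, $(w,y_+(z))$ is a saddle point of $\Psi$ over $X\times Y$, hence $x^*(\tilde r,z)=w=x(y_+(z),z)$, and incidentally $y_+(z)$ is an admissible choice of $y^*(\tilde r,z)$. I expect no real obstacle here: the only place requiring care is the coordinatewise projection case-check, and even that is routine once the identity $h(w)-\tilde r=\tfrac1\alpha(v-\Pi_Y(v))$ is in hand; the rest is bookkeeping around the definitions.
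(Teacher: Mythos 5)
Your proposal is correct and follows essentially the same route as the paper: both arguments exhibit $\bigl(x(y_+(z),z),\,y_+(z)\bigr)$ as a saddle point of the perturbed min--max problem defining $x^*(\tilde r,z)$, the key computation in each case being that the definition of $\tilde r$ collapses $y_+(z)+\alpha\bigl(h(x(y_+(z),z))-\tilde r\bigr)$ to $y+\alpha h(x(y,z))$. The only cosmetic difference is that the paper verifies the dual-side optimality via the projected-gradient fixed-point identity $\Pi_Y\bigl(y_+(z)+\alpha(h(w)-\tilde r)\bigr)=y_+(z)$, whereas you unpack the equivalent coordinatewise sign conditions for maximizing a linear function over the box $[0,B]^m$.
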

\begin{proof}
	We claim that $x(y_+(z), z)$ and $y_+(z)$ are min-max solutions to \eqref{eq: perturbed min-max} with $r = \tilde{r}$. We first consider the optimality condition of the max-problem, which can be written as 
	\begin{align}
		\Pi_Y\Big( y + \alpha (h(x)-\tilde{r})\Big) = y	 \notag 
	\end{align}
	with a pair of min-max solution $(x,y)$ to \eqref{eq: perturbed min-max}, i.e., a gradient projection step should be a fixed-point iteration at optimal solution. Now take $x = x(y_+(z), z)$ and $y = y_+(z)$, the left-hand side of the above becomes 
	\begin{align}
		\Pi_Y\Big(y_+(z) + \alpha (h(x(y_+(z), z))-\tilde{r})\Big) = \Pi_Y\Big( y + \alpha h(x(y, z))\Big) = y_+(z), \notag 
	\end{align}
	where the first equality is due to the definition of $\tilde{r}$ and the second inequality is due to the definition of $y_+(z)$. Hence the pair $\left(x(y_+(z), z), y_+(z) \right)$ satisfies the optimality condition of the max-problem in \eqref{eq: perturbed min-max}. The optimality condition of $x(y_+(z), z)$ reads: 
	\begin{align}
		0 \in \nabla f(x(y_+(z), z)) + \nabla h(x(y_+(z), z))^\top y_+(z) + p(x(y_+(z), z) - z) + N_X(x(y_+(z), z)). \notag 	
	\end{align}
    So the pair also satisfies the optimality condition of the min-problem in \eqref{eq: perturbed min-max}.
\end{proof}

Next we show that if $B$ is chosen to be sufficiently large, then $x^*(r, z)$ is optimal to the constrained version of the min-max problem \eqref{eq: perturbed min-max}, where constraints $h(x) \leq 0$ are slightly perturbed. 

\begin{lemma}\label{lemma: equivalence between perturbed problems}
    Suppose Assumptions \ref{assumption: basic assumption} and \ref{assumption: convex regularity} holds. Let $\|r\| \leq \frac{\Delta_0}{4}$, $p > L_f$, and choose  $Y = [0, B]^m$ with
    \begin{align} \label{eq: B_underline_eb}
        B >  \frac{4\nabla_f D_X + 2 pD_X^2}{\Delta_0}. 
    \end{align}
    Then $x^*(r, z)$ is the optimal solution to the following problem: 
    \begin{align}\label{eq: perturbed problem}
        \min_{x \in X} \{ f(x) + \frac{p}{2}\|x-z\|^2~:~ h(x) \leq r\}. 
    \end{align}
    In addition, $y^*(r, z)$ are multipliers corresponding to $h(x) \leq r$, and $y_i^*(r, z)\in [0, B)$ for all $i\in [m]$. 
\end{lemma}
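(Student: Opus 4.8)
The plan is to realize \eqref{eq: perturbed problem} as a strongly convex program whose unique solution is $x^*(r,z)$, via a textbook exact-penalty argument; the crucial quantitative input is an a priori Lagrange-multiplier bound that is guaranteed to lie strictly below $B$. First, since $p>L_f$, the map $x\mapsto f(x)+\frac p2\|x-z\|^2$ is strongly convex on $X$ with modulus $\mu_K$, so \eqref{eq: perturbed problem} is a strongly convex program with a unique minimizer $\bar x$. Evaluating the inner maximum over the box $Y=[0,B]^m$ gives $\max_{y\in Y}\langle y,h(x)-r\rangle=B\sum_{i=1}^m\max\{0,h_i(x)-r_i\}$, so $x^*(r,z)$ is the unique minimizer over $X$ of the strongly convex penalty function
\begin{align*}
\psi(x):=f(x)+\tfrac p2\|x-z\|^2+B\sum_{i=1}^m\max\{0,h_i(x)-r_i\}.
\end{align*}
It therefore suffices to prove $x^*(r,z)=\bar x$ and then read off the dual statement.

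Next, since $\|r\|\le\Delta_0/4$ forces $r_i\ge-\Delta_0/4$, the Slater-type point $\hat x\in X$ of Assumption~\ref{assumption: convex regularity} satisfies $h_i(\hat x)-r_i\le-\Delta_0+\Delta_0/4=-\tfrac34\Delta_0<0$ for all $i$, so \eqref{eq: perturbed problem} obeys Slater's condition and admits a KKT multiplier $\bar y\in\R^m_+$ for $h(x)\le r$; in particular $\bar x$ minimizes the Lagrangian $f(\cdot)+\tfrac p2\|\cdot-z\|^2+\langle\bar y,h(\cdot)-r\rangle$ over $X$, and $\langle\bar y,h(\bar x)-r\rangle=0$. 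Evaluating that minimum at $\hat x$ and using $h_i(\hat x)-r_i\le-\tfrac34\Delta_0$ together with $|f(\hat x)-f(\bar x)|\le\nabla_f D_X$ and $\tfrac p2\|\hat x-z\|^2\le\tfrac p2 D_X^2$ (valid because $\hat x,z\in X$), I get $\tfrac{3\Delta_0}{4}\|\bar y\|_1\le\nabla_f D_X+\tfrac p2 D_X^2$, whence
\begin{align*}
\|\bar y\|_\infty\le\|\bar y\|_1\le\frac{4\nabla_f D_X+2pD_X^2}{3\Delta_0}<B
\end{align*}
by the assumed lower bound \eqref{eq: B_underline_eb}.

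With this bound in hand the exact-penalty step is routine: for every $x\in X$, using $0\le\bar y_i\le B$ and $\max\{0,t\}\ge t$,
\begin{align*}
\psi(x)\ \ge\ f(x)+\tfrac p2\|x-z\|^2+\langle\bar y,h(x)-r\rangle\ \ge\ f(\bar x)+\tfrac p2\|\bar x-z\|^2\ =\ \psi(\bar x),
\end{align*}
where the second inequality is minimality of $\bar x$ for the Lagrangian plus complementary slackness, and the equality uses $h(\bar x)\le r$. Hence $\bar x$ minimizes the strongly convex $\psi$ over $X$, so $x^*(r,z)=\bar x$. Finally, feasibility $h_i(\bar x)-r_i\le0$ makes $\max_{y\in Y}\langle y,h(\bar x)-r\rangle=0$, which together with $\langle\bar y,h(\bar x)-r\rangle=0$ shows $\bar y$ attains this maximum; we take $y^*(r,z):=\bar y\in\Argmax_{y\in Y}\langle y,h(x^*(r,z))-r\rangle$, which is by construction a multiplier for $h(x)\le r$ in \eqref{eq: perturbed problem} and satisfies $y_i^*(r,z)\le\|\bar y\|_1<B$ for all $i$.

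The only delicate point is the multiplier bound: it must hold uniformly over all admissible $(r,z)$ and be strictly less than $B$, which is exactly why the perturbation is capped at $\|r\|\le\Delta_0/4$ (so a Slater margin $\tfrac34\Delta_0$ survives) and why compactness of $X$ is invoked to control the duality-gap term by $\nabla_f D_X+\tfrac p2 D_X^2$. A secondary subtlety worth flagging is that $y^*(r,z)$ denotes the \emph{specific} element $\bar y$ of the $\Argmax$ set: when a perturbed constraint $h_i(\bar x)=r_i$ is active, other maximizers may take the boundary value $B$ in coordinate $i$, so not every element of $\Argmax_{y\in Y}\langle y,h(x^*(r,z))-r\rangle$ lies in $[0,B)^m$.
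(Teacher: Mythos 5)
Your proof is correct, but it takes a genuinely different route from the paper's. The paper argues directly on the min--max solution: it partitions the indices according to the value of $y_i^*(r,z)$ and the sign of $h_i(x^*(r,z))-r_i$, and rules out the ``bad'' set $I_3=\{i: y_i^*(r,z)=B\}$ by contradiction --- if some coordinate saturated at $B$, moving $x^*(r,z)$ toward the Slater point $\hat x$ would decrease the Lagrangian $f(\cdot)+\langle y^*(r,z),h(\cdot)-r\rangle+\frac p2\|\cdot-z\|^2$ by roughly $\frac{\Delta_0 B}{4}-\nabla_f D_X-\frac p2 D_X^2>0$, contradicting the optimality of $x^*(r,z)$; once $I_3=\emptyset$, the KKT system for \eqref{eq: perturbed problem} is read off. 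You instead start from the constrained problem \eqref{eq: perturbed problem}: you observe Slater's condition survives the perturbation (margin $\tfrac34\Delta_0$), extract a KKT multiplier $\bar y$, bound $\|\bar y\|_1\le\frac{4\nabla_f D_X+2pD_X^2}{3\Delta_0}<B$ by evaluating the Lagrangian at $\hat x$, and then use the identity $\max_{y\in[0,B]^m}\langle y,h(x)-r\rangle=B\sum_i\max\{0,h_i(x)-r_i\}$ plus exact-penalty monotonicity and strong convexity of $\psi$ to conclude $x^*(r,z)=\bar x$. The two arguments consume the same quantitative input (the Slater margin against $\nabla_f D_X+\tfrac p2D_X^2$, with your version even giving a factor-of-three slack), but yours makes the exact-penalty interpretation of the box $Y$ explicit and cleanly isolates the multiplier bound as a standalone estimate, whereas the paper's is shorter and stays entirely within the saddle-point formulation. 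Your closing caveat --- that $y^*(r,z)$ must be the \emph{specific} saddle-point multiplier rather than an arbitrary element of $\Argmax_{y\in Y}\langle y,h(x^*(r,z))-r\rangle$, since coordinates with $h_i(x^*(r,z))=r_i$ admit maximizers at $B$ --- is a fair observation; the paper's proof implicitly makes the same selection by requiring $y^*(r,z)$ to satisfy the joint stationarity condition \eqref{eq: perturbed primal stationarity}.
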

\begin{proof}
    The optimality of $x^*(r, z)$ reads: 
    \begin{align} \label{eq: perturbed primal stationarity}
        0 \in  \nabla f(x^*(r, z)) + \nabla h(x^*(r, z))^\top y^*(y, z) + N_X(x^*(r, z)).
    \end{align}
    Since $ y^*(r, z) \in \Argmax_{y \in Y} \langle h(x^*(r, z)) - r, y\rangle$, each $i \in [m]$ belongs to one of the following three sets: 
     \begin{align*}
        I_1 := & \{i \in [m]: h_i(x^*(r, z)) -r_i < 0,  ~y_i^*(r, z) = 0\}, \\
        I_2 := & \{i \in [m]: h_i(x^*(r, z)) -r_i  = 0, ~y_i^*(r, z) \in [0, B)\}, \\
        I_3 := & \{i \in [m]:h_i(x^*(r, z)) -r_i \geq 0, ~y_i^*(r, z) = B\}.
    \end{align*}
    We claim $I_3 = \emptyset$, and consequently, \eqref{eq: perturbed primal stationarity} and the fact that $I_1\cup I_2 = [m]$ imply that $x^*(r, z)$ is optimal to \eqref{eq: perturbed problem} and $y_i^*(r, z) \in [0, B)$ for all $i\in [m]$. Notice that \eqref{eq: perturbed primal stationarity} suggests that $x^*(r, z)$ is the unique optimal solution to the problem 
    \begin{align*}
        \min_{x \in X} f(x) + \langle y^*(r, z), h(x) - r\rangle + \frac{p}{2}\|x-z\|^2. 
    \end{align*}
    For the purpose of contradiction, suppose $I_3 \neq \emptyset$, and we shall construct a different point that achieves a lower objective than $x^*(r, z)$. By Assumption \ref{assumption: convex regularity}, there exists some $\theta \in (0, 1)$ such that $x_\theta := \theta \hat{x} + (1-\theta)x^*(r, z)$ satisfies that  $h_i(x_\theta) < -\frac{1}{2} \Delta_0$ for all $i \in [m]$. Notice that $|r_i|\leq \|r\| \leq \frac{\Delta_0}{4}$ for $i \in [m]$, and we have
    \begin{align}\notag 
        y_i^*(r, z) \times \left(h_i(x_\theta) - h_i(x^*(r, z))\right)
        \begin{cases}
            = 0, \quad & i \in I_1,\\
            \leq 0,  \quad & i \in I_2, \\
            < -\frac{\Delta_0 B}{4}, \quad & i \in I_3.
        \end{cases}
    \end{align}
    As a result, 
    \begin{align*}
       & \left( f(x_{\theta}) + \langle  y^*(r, z) , h(x_\theta) -r\rangle + \frac{p}{2}\|x_\theta - z\|^2 \right) \\
       &- \left( f(x^*(r, z)) + \langle  y^*(r, z) , h(x^*(r, z)) -r\rangle  + \frac{p}{2}\|x^*(r, z) - z\|^2\right) \\
       \leq & f(x_{\theta}) - f( y^*(r, z)) +  \frac{p}{2}\|x_\theta - z\|^2 + \sum_{i\in I_3}  y_i^*(r, z) \left(h_i(x_\theta) - h_i(x^*(r, z))\right) \\
       < & \nabla_f D_X + \frac{p}{2}D_X^2  -\frac{\Delta_0 B}{4} < 0, 
    \end{align*}
    where the second inequality is due to $f$ being $\nabla_f$ over $X$ and the compactness of $X$, and the third inequality is due to the lower bound of $B$ in \eqref{eq: B_underline_eb}. This is a desired contradiction. 
\end{proof}

Our subsequent analysis needs to use the fact that $x^*(r, z)$ is continuous in $(r, z)$, formally stated in the next lemma.
\begin{lemma}\label{lemma: continuous solution}
    The vector $x^*(r, z)$ is continuous in $(r, z)$. 
\end{lemma}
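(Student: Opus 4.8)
The plan is to exploit the uniform strong convexity of the minimization problem defining $x^*(r,z)$. For fixed $(r,z)$ write
\[
\Psi(x;r,z):= f(x) + \max_{y\in Y}\langle y, h(x)-r\rangle + \frac{p}{2}\|x-z\|^2 = f(x) + B\sum_{i\in[m]}\bigl(h_i(x)-r_i\bigr)_+ + \frac{p}{2}\|x-z\|^2,
\]
where $(\,\cdot\,)_+:=\max\{0,\cdot\}$ and we used $Y=[0,B]^m$. Since $p>L_f$, the map $x\mapsto\Psi(x;r,z)$ is $\mu_K$-strongly convex on the convex compact set $X$ with $\mu_K=p-L_f$, and this modulus does \emph{not} depend on $(r,z)$; hence $x^*(r,z)=\argmin_{x\in X}\Psi(x;r,z)$ is well-defined and unique for every $(r,z)$.

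First I would record a uniform-in-$x$ modulus-of-continuity estimate for $\Psi$ in the parameters: for any $x\in X$ and any $(r,z),(r',z')$ lying in a fixed bounded region,
\[
|\Psi(x;r,z)-\Psi(x;r',z')|\ \le\ B\sqrt{m}\,\|r-r'\| + C_z\,\|z-z'\|,
\]
where the first term uses the $1$-Lipschitzness of $t\mapsto(s-t)_+$ together with a Cauchy--Schwarz step, and the second uses $\frac{p}{2}\|x-z\|^2-\frac{p}{2}\|x-z'\|^2=\frac{p}{2}\langle z'-z,\,2x-z-z'\rangle$ together with the boundedness of $x\in X$ (so $C_z$ may be taken locally constant in $z,z'$). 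In particular $\sup_{x\in X}|\Psi(x;r,z)-\Psi(x;r',z')|\to0$ as $(r',z')\to(r,z)$; since continuity is a local property, working on bounded neighborhoods suffices.

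Next comes the standard two-sided strong-convexity comparison. Writing $\hat x:=x^*(r,z)$ and $\bar x:=x^*(r',z')$, strong convexity at each minimizer gives $\Psi(\bar x;r,z)\ge\Psi(\hat x;r,z)+\frac{\mu_K}{2}\|\bar x-\hat x\|^2$ and $\Psi(\hat x;r',z')\ge\Psi(\bar x;r',z')+\frac{\mu_K}{2}\|\hat x-\bar x\|^2$; adding these and rearranging yields
\[
\mu_K\|\hat x-\bar x\|^2\ \le\ \bigl[\Psi(\bar x;r,z)-\Psi(\bar x;r',z')\bigr]+\bigl[\Psi(\hat x;r',z')-\Psi(\hat x;r,z)\bigr]\ \le\ 2\sup_{x\in X}|\Psi(x;r,z)-\Psi(x;r',z')|.
\]
Combining with the previous estimate gives $\|x^*(r,z)-x^*(r',z')\|^2\le\frac{2}{\mu_K}\bigl(B\sqrt{m}\,\|r-r'\|+C_z\|z-z'\|\bigr)\to0$, which establishes (indeed locally H\"older-$\tfrac12$) continuity of $x^*$ in $(r,z)$.

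The only point requiring a little care is the uniform-in-$x$ parameter-continuity estimate for $\Psi$ — in particular handling the nonsmooth $(\,\cdot\,)_+$ term and the quadratic proximal term over a bounded region — but this is routine. Alternatively one could invoke Berge's maximum theorem: $\Psi$ is jointly continuous and $X$ is compact, so the $\argmin$ correspondence is upper hemicontinuous, and uniqueness of the minimizer (from strong convexity) upgrades this to continuity. I would favor the explicit estimate above, as it is self-contained and slightly stronger.
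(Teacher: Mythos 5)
Your proof is correct, and it takes a genuinely different route from the paper's. You argue directly from the optimization problem defining $x^*(r,z)$: you rewrite $\max_{y\in[0,B]^m}\langle y,h(x)-r\rangle$ as $B\sum_i(h_i(x)-r_i)_+$, observe that the resulting objective $\Psi(\cdot;r,z)$ is $\mu_K$-strongly convex uniformly in the parameters, establish a uniform-in-$x$ modulus of continuity of $\Psi$ in $(r,z)$, and conclude via the standard two-sided strong-convexity comparison at the two minimizers. The paper instead fixes $z$, takes a sequence $r^k\to r^*$, passes to the limit in the KKT system of the equivalent constrained problem \eqref{eq: perturbed problem} (using Lemma \ref{lemma: equivalence between perturbed problems}, boundedness of the multipliers, closedness of the normal cone, and uniqueness of the minimizer), and then handles the $z$-dependence separately through the Lipschitz bound $\|x^*(r,z)-x^*(r,z')\|\le\kappa_1\|z-z'\|$. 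Your argument buys several things: it is self-contained and quantitative (a local H\"older-$\tfrac12$ modulus rather than bare sequential continuity), it treats $(r,z)$ jointly in one step, and it does not lean on Lemma \ref{lemma: equivalence between perturbed problems} — and hence avoids the restrictions $\|r\|\le\Delta_0/4$, the lower bound on $B$, and Assumption \ref{assumption: convex regularity} that the paper's route implicitly imports even though the lemma statement carries no such hypotheses. What the paper's KKT-based route buys in exchange is information reused downstream (active-set stability and multiplier limits), which your argument does not produce; but for the continuity claim itself your proof is cleaner and strictly more economical in hypotheses.
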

\begin{proof}
    To see the continuity with respect to $r$, fix $z$ and let $\{r^k\}_{k \in \N}$ be a sequence convergent to some $r^* \in \R^m$. By Lemma \ref{lemma: equivalence between perturbed problems}, we see that
    \begin{align*}
        & 0 \in \nabla f(x^*(r^k, z)) + p(x^*(r^k,z) - z) + \nabla h(x^*(r^\top,z))^\top y^*(r^k, z) + N_X(x^*(r^k,z)), \\
        & h(x^*(r^k, z))\leq r^k, \\
        & y^*_i(r^k, z) \in  [0, B), ~y^*_i(r^k, z) \times (h_i(x^*(r^k, z))-r^k_i) = 0, ~\forall i\in [m],
    \end{align*}
    Due to the boundedness of $\{(x^*(r^k, z),  y^*_i(r^k, z))\}_{k\in \N}$, we may assume that they converge to some $(x^*, y^*)\in X \times [0, B]^m$. By the closedness of normal cone and the fact that if $h_i(x^*)  < r_i^*$, then $h_i(x^*(r^k, z))  < r_i^k$ and hence $y_i^*(r^k_z) = 0$ for all sufficiently large $k$'s, taking limit on the above KKT condition suggests that $x^* = x^*(r^*, z)$ is the unique optimal solution to problem \eqref{eq: perturbed problem} with $r = r^*$. Hence we conclude that $x^*(\cdot, z)$ is continuous.  

     Using a similar proof to the primal error bound \eqref{eb11} in Lemma \ref{lemma: primal eb}, we see that $\|x^*(r, z) - x^*(r, z')\|\leq \kappa_1 \|z-z'\|$ for any fixed $r$. Consider a sequence $\{(r^k, z^k)\}_{k \in \N}$ convergent to some $(r^*, z^*)$, and let $\epsilon > 0$. There exists an index $T$ such that for all $t > T$ we have $\|z^t-z^*\| \leq \frac{\epsilon}{2\kappa_1}$ and hence $\|x^*(r^k, z^t) - x^*(r^k, z^*)\| < \frac{\epsilon}{2}$ for all $k \in \N$.  Since $x^*(\cdot, z^*)$ is continuous, we know $x^*(r^k, z^*) \rightarrow x^*(r^*, z^*)$, so there exists another index $K$ such that $\|x^*(r^k, z^*) - x^*(r^*, z^*)\| <  \frac{\epsilon}{2}$ for all $k > K$. As a result, for all $k > \max \{ K, T\}$, we have 
     \begin{align*}
        \|x^*(r^k, z^k) - x^*(r^*, z^*)\| \leq \|x^*(r^k, z^k) - x^*(r^k, z^*)\| + \|x^*(r^k, z^*) - x^*(r^*, z^*)\| < \frac{\epsilon}{2} + \frac{\epsilon}{2} = \epsilon.
     \end{align*}
    This proves the claim.    
\end{proof}

The next lemma summarizes some key properties of solutions to \eqref{eq: perturbed problem}. 
\begin{lemma}\label{eq: bounded hessian of submatrix}
    Suppose Assumptions \ref{assumption: basic assumption} and \ref{assumption: convex regularity} hold, and $B$ satisfies \eqref{eq: B_underline_eb}. There exists some $\hat{\delta}\in (0, \frac{\Delta_0}{4}]$ such that if $\max\{ \|x(z) - z\|, \|r\|\} \leq \hat{\delta}$, then 
    the following claims hold. 
    \begin{enumerate}
        \item For any $\mathcal{S}_A \subseteq A[x^*(r,z)]$, we have 
                \begin{align}
                    \sigma_{\min} ( J_{\mathcal{S}_A}(x^*(r, z))^\top ) \geq \frac{\sigma_{X^*}}{2}, \notag 
                \end{align}
                where $J_{\mathcal{S}_A}(x^*(r, z))$ is the submatrix of $J_A(x^*(r, z))$ consisting of rows in $\mathcal{S}_A$.
        \item Denote the active set of $x^*(r, z)$ in problem \eqref{eq: perturbed problem} as 
            $$A_r[x^*(r, z)]: = \{ i\in [m]:~ h_i(x^*(r, z)) = r_i\} \cup \{j \in [m+1:m+l]:~g_j(x^*(r, z)) = 0\}.$$
        Then we have $A_r[x^*(r, z)] \subseteq A[x^*(0, z)]$.
        \item There exist multipliers $\mu \in \R^l$ such that 
        \begin{subequations}\label{eq: perturbed kkt}
        \begin{align}
            & 0 = \nabla f(x^*(r,z)) + \nabla h(x^*(r, y))^\top y^*(r, z) + \nabla g(x^*(r, z))^\top \mu + p(x^*(r, z) - z),  \label{eq: perturbed kkt1}\\
            & \mu_j \geq 0, ~ \mu_j g_j(x^*(r, z)) = 0, ~\forall j \in [l],
        \end{align}
        \end{subequations}
        and 
         \begin{align} \label{eq: dual bound}
           \max\{\| y^*(r, z)\|, \|\mu\|\} \leq \sqrt{\|y^*(r, z)\|^2 +\|\mu\|^2} \leq  \Lambda: = \frac{2\nabla_f + 2pD_X}{\sigma_{X^*}}.
        \end{align}
    \end{enumerate}
\end{lemma}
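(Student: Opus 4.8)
My plan is to reduce all three claims to local properties of \eqref{eq: nlp} near its stationary set $X^*$. First I would record two elementary facts: from the definitions, $x^*(0,z) = x(z)$; and $\{z : x(z) = z\} = X^*$. For the latter, if $x(z)=z$ the proximal term is stationary at the minimizer, so $z$ is a KKT point of \eqref{eq: nlp} with multipliers in $[0,B)^m$ (apply Lemma \ref{lemma: equivalence between perturbed problems} with $r=0$), hence $z\in X^*$; conversely, if $z^*\in X^*$ with multipliers $y^*\in[0,B)^m$ then $y^*\in\Argmax_{y\in Y}\langle y,h(z^*)\rangle$, so $z^*$ minimizes $f(x)+h_Y(x)+\tfrac p2\|x-z^*\|^2$ and $x(z^*)=z^*$. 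Since $z\mapsto\|x(z)-z\|$ is continuous on the compact set $X$ and $X^*$ is compact and nonempty (a minimizer of \eqref{eq: nlp} belongs to $X^*$ by Lemma \ref{lemma: equivalence between perturbed problems}), the condition $\|x(z)-z\|\le\hat\delta$ confines $z$ to an arbitrarily small neighborhood of $X^*$ as $\hat\delta\downarrow 0$; by \eqref{eb11} and Lemma \ref{lemma: continuous solution} the same then holds for $x(z)$ and, when $\|r\|\le\hat\delta$ is small, for $x^*(r,z)$.

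Next I would prove all three parts by a single contradiction/compactness argument. If no such $\hat\delta$ existed, choose $(r^k,z^k)$ with $\max\{\|x(z^k)-z^k\|,\|r^k\|\}\le 1/k$ violating one of the three assertions; by pigeonhole one assertion fails for infinitely many $k$, and along a subsequence $z^k\to z^*\in X^*$, so that $x(z^k)\to z^*$, $r^k\to 0$, and $x^*(r^k,z^k)\to x^*(0,z^*)=z^*$ by continuity of $x^*$. For part 1: continuity of the constraint functions gives $A[x^*(r^k,z^k)]\subseteq A[z^*]$ for large $k$, so the offending index sets $\mathcal S_A^k$ all lie in the finite set $A[z^*]$; passing to a subsequence with $\mathcal S_A^k\equiv\mathcal S_A$ and using continuity of $x\mapsto J_{\mathcal S_A}(x)$ and of $\sigma_{\min}(\cdot)$ gives $\sigma_{\min}(J_{\mathcal S_A}(z^*)^\top)\le\sigma_{X^*}/2$, contradicting Assumption \ref{assumption: convex regularity}.2. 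For part 2: along a subsequence the offending index $i$ is fixed and, in the limit, $i\in A[z^*]$, yet $i\notin A[x(z^k)]=A[x^*(0,z^k)]$ while $i\in A_{r^k}[x^*(r^k,z^k)]$; to close the contradiction I would use a perturbation estimate $\|x^*(r,z)-x^*(0,z)\|\le C\|r\|$ valid \emph{uniformly} for $z$ near $X^*$ (from $\mu_K$-strong convexity of the objective and the Slater margin \eqref{eq: B_underline_eb} for the perturbed system $h(x)\le r$), which forces $|h_i(x(z^k))|=O(\|r^k\|)\to 0$ and, combined with the LICQ/continuity input of part 1, rules out a persistent discrepancy between the two active sets. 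I expect this uniform perturbation estimate — and, more generally, tracking how the active set of \eqref{eq: perturbed problem} moves uniformly over $z$ in a neighborhood of $X^*$ — to be the main obstacle; this is exactly where Assumption \ref{assumption: convex regularity} is indispensable.

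For part 3: by Lemma \ref{lemma: equivalence between perturbed problems}, $x^*(r,z)$ solves \eqref{eq: perturbed problem} with $h$-multipliers $y^*(r,z)\in[0,B)^m$, and its active constraints are indexed by $A_r[x^*(r,z)]$, which by parts 1--2 sits inside $A[z^*]$ for a nearby $z^*\in X^*$, on whose index subsets the rows of $J(x^*(r,z))$ have smallest singular value at least $\sigma_{X^*}/2$; in particular they are linearly independent, so LICQ holds for \eqref{eq: perturbed problem} at $x^*(r,z)$ and \eqref{eq: perturbed kkt} holds for some $\mu\in\R^l_+$. Rewriting \eqref{eq: perturbed kkt1} as $J_{A_r}(x^*(r,z))^\top\lambda=-\big(\nabla f(x^*(r,z))+p(x^*(r,z)-z)\big)$, with $\lambda$ collecting the (only possibly nonzero) components of $(y^*(r,z),\mu)$, the singular value bound together with $\|\nabla f\|\le\nabla_f$ on $X$ and $\|x^*(r,z)-z\|\le D_X$ yields $\sqrt{\|y^*(r,z)\|^2+\|\mu\|^2}=\|\lambda\|\le\tfrac{2}{\sigma_{X^*}}(\nabla_f+pD_X)=\Lambda$, the case $A_r=\emptyset$ being trivial. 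Finally I would set $\hat\delta:=\min\{\hat\delta_1,\hat\delta_2,\hat\delta_3,\Delta_0/4\}$, where $\hat\delta_i$ is the threshold obtained for part $i$.
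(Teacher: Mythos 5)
Your treatment of parts 1 and 3 follows essentially the same route as the paper: part 1 is the identical compactness/contradiction argument (a violating sequence $(r^k,z^k)\to(0,z^*)$ with $z^*\in X^*$, upper semicontinuity of the active set so that the offending submatrices converge to row submatrices of $J_A(z^*)$, then continuity of $\sigma_{\min}$ contradicts Assumption \ref{assumption: convex regularity}), and part 3 is the same LICQ-plus-least-singular-value bound on the stacked multiplier vector. Those portions are fine.

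The genuine gap is in part 2, and you have in effect flagged it yourself. You propose to derive the inclusion $A_r[x^*(r,z)]\subseteq A[x^*(0,z)]$ from a quantitative perturbation estimate $\|x^*(r,z)-x^*(0,z)\|\le C\|r\|$, which only yields $|h_i(x(z^k))|=O(\|r^k\|)\to 0$; but membership in $A[x^*(0,z^k)]$ requires the \emph{exact} equality $h_i(x(z^k))=0$ at a fixed finite $k$, and no quantitative smallness or limiting statement (``$i\in A[z^*]$ in the limit'') can produce that: $h_i(x(z^k))$ could be a strictly negative number tending to zero while $h_i(x^*(r^k,z^k))=r^k_i$ holds exactly, so the ``persistent discrepancy'' you hope to rule out is not ruled out by your argument. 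The paper closes part 2 quite differently and more cheaply: it fixes $z$, takes a violating sequence $r^k\to 0$ with a common offending index $\hat i$, and uses only the continuity of $x^*(\cdot,z)$ (Lemma \ref{lemma: continuous solution}) together with $h_{\hat i}(x^*(r^k,z))=r^k_{\hat i}$ to pass to the limit and conclude $\hat i\in A[x^*(0,z)]$ --- no perturbation bound is needed. Moreover, the uniform estimate $\|x^*(r,z)-x^*(0,z)\|\le C\|r\|$ you want to invoke is, in this paper, the content of Lemma \ref{lemma: local eb} and Proposition \ref{prop: global perturbation eb}, whose proofs rely on the multiplier bound and basic-set machinery built \emph{on top of} the present lemma; deriving it here directly from $\mu_K$-strong convexity requires controlling $\|y^*(r,z)-y^*(0,z)\|$, which is again part 3 of this lemma, so your route also risks circularity. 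To repair part 2 you should replace the quantitative argument by the paper's soft continuity argument (at fixed $z$), or else supply a genuinely uniform-in-$z$ contradiction argument, which your sketch does not do.
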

\begin{proof}
    We prove the first two claims by contradiction, and the third claim follows accordingly. 
    \begin{enumerate}
        \item Suppose such a $\hat{\delta}$ does not exist, and there exists a sequence $\{(r^k, z^k)\}_{k \in \N}$ satisfying that $\|r^k\| \rightarrow 0$ and $\|z^k - x(z^k)\|\rightarrow 0$ as $k \rightarrow +\infty$, and the least singular value of a row submatrix $J^k$ of $J_{A}(x^*(r^k, z^k))$ drops below $\frac{\sigma_{X^*}}{2}$ as  $k\rightarrow +\infty$. Due to the compactness of $X$, passing to a subsequence if necessary, we assume that $(r^k, z^k)$ converges to $(0, z^*)$ for some $z^* \in X$. Notice that since $x^*(0, z) = x(z)$ for all $z\in X$, we have $x^*(0, z^*) = x(z^*) = z^*$ by Lemma \ref{lemma: continuous solution}. By Lemma \ref{lemma: equivalence between perturbed problems}, $z^*$ is a stationary solution to the original problem \eqref{eq: nlp}. Next, notice that $A[x^*(r^k, z^k)] \subseteq A[z^*]$ for sufficiently large $k$'s, so that $J^k$ will be closed to some row submatrix of $J_A(z^*)$, denoted by $J^k_*$, i.e., $\|J^k - J^k_*\|\rightarrow 0$ as $k \rightarrow +\infty$. Since the choice of submatrices is finite, we may assume that $J^k_*$ consists of the same subset of rows in $A[z^*]$. By the continuity of the singular value, we should have $\sigma_{\min} ((J^k)^\top) \geq \frac{\sigma_{X^*}}{2}$ for sufficiently large $k$'s, which is a desired contradiction. 

        \item Suppose such a $\hat{\delta}$ does not exist, and there exists a sequence $\{r^k\}_{k \in \N}$ satisfying that $\|r^k\| \rightarrow 0$, and for every $k \in \N$, there exists an index $i_k \in [m]$ such that $i_k \in A_r[x^*(r^k, z)]$ but $i_k \notin A[x^*(0, z)]$. Since the choice of $i_k \in [m+l]$ is finite, passing to a subsequence if necessary, we assume that $i_k$ stays the same for large enough $k \in \N$, simply denoted by $\hat{i}$. Since  $\hat{i} \in A_r[x^*(r, z)]$, we have $h_{\hat{i}}(x^*(r^k, z)) = r^k_i$ if $\hat{i} \in [m]$, or $g_{\hat{i}}(x^*(r^k, z)) = 0$ if $\hat{i} \in [m+1:m+l]$. By Lemma \ref{lemma: continuous solution} and the continuity of $h$ and $g$, we see that $h_{\hat{i}}(x^*(0, r)) = \lim_{k\rightarrow +\infty} h_{\hat{i}}(x^*(r^k, z)) = \lim_{k\rightarrow +\infty} r^k_i = 0$ and $g_{\hat{i}}(x^*(0, r)) = \lim_{k\rightarrow +\infty} g_{\hat{i}}(x^*(r^k, z)) = 0$, and hence $\hat{i} \in A[x^*(0, z)]$, which is a desired contradiction. 

        \item By parts 1 and 2, since active constraints defining $X$ have linearly independent gradients at $x^*(r,z)$, the existence of $\mu$ is ensured, and conditions \eqref{eq: perturbed kkt} are exactly the KKT condition for problem \eqref{eq: perturbed problem}. Notice that for $i \in [m]$ and $j \in [l]$ such that  $h_i(x^*(r,z)) < r_i$ and $g_j(x^*(r,z)) < 0$, we have $y^*_i(r, z) = \mu_j = 0$. We use $y_{A_r}$ and $\mu_{A_r}$ to denote subvectors of $y^*(r, z)$ and $\mu$ consisting of indices in $A_r[x^*(r,z)]$, then \eqref{eq: perturbed kkt1} is equivalent to 
        \begin{align*}
            0 = \nabla f(x^*(r,z)) + p(x^*(r, z) - z) + J_{A_r[x^*(r,z)]}(x^*(r, z))^\top [y_{A_r}^\top, \mu^\top_{A_r}]^\top. 
        \end{align*}
        By parts 1 and 2, we know the smallest singular value of $J_{A_r[x^*(r,z)]}(x^*(r, z))$ is bounded from below by $\sigma_{X^*}/2$, so the above equation implies that 
        \begin{align*}
             \max\{\| y^*(r, z)\|, \|\mu\|\} \leq & \sqrt{\|y_{A_r}\|^2 + \|\mu_{A_r}\|^2} \leq  \frac{2\nabla_f + 2pD_X}{\sigma_{X^*}}.
        \end{align*}
        This completes the proof. 
    \end{enumerate}
\end{proof}

Next we introduce the notion of basic set as follows. 
\begin{definition}
    Let $\hat{\delta}$ and $\Lambda$ be as in Lemma \ref{eq: bounded hessian of submatrix}. Denote 
    \begin{align}
        \Omega : = \{(z, r) \in \R^n \times \R^m:~ \max\{\|x(z) - z\|, \|r\|\} \leq \hat{\delta}\}
    \end{align}
    We say $\mathcal{B} \subseteq[m+l]$ is a basic set of $(r, z) \in \Omega$ if $J_{\mathcal{B}}(x^*(r, z))$ is of full row rank, and there exist some $y \in \R^{m}_+$ and $\mu \in \R^l_+$ such that 
    \begin{subequations} \label{eq: basic set}
    \begin{align}
        & 0 = \nabla f(x^*(r, z)) + J(x^*(r, z))^\top [y^\top, \mu^\top] ^\top + p(x^*(r, z) - z), \label{eq: basic set 1} \\
        & h_i(x^*(r, z)) = r_i,~ g_j(x^*(r, z)) = 0, ~\forall i, j \in \mathcal{B},  \label{eq: basic set 2}\\
        & y_i =\mu_j = 0, ~\forall i, j \notin \mathcal{B},  \label{eq: basic set 3}\\
        & \max\{\|y\|, \|\mu\| \} \leq \Lambda.
    \end{align}
    \end{subequations}
\end{definition}

Note that a basic set is similar to but slightly different from the active set: a basic set can be a subset of an active set, and the concept of basic set is associate with $(r, z) \in \Omega$. We first show basic sets are ``continuous" in the following sense.  
\begin{lemma}\label{lemma: continuity of basic sets}
Suppose $r^k \rightarrow r$ as $k\rightarrow +\infty$ and $\mathcal{B}$ is a common basic set of all $(r^k,z)$,  then $B$ is also a basic set of $(r,z)$.
\end{lemma}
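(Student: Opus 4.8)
The plan is to pass to a limit in the system \eqref{eq: basic set} defining a basic set, exploiting the continuity of $x^{*}(\cdot,z)$ and the uniform bound on the associated multipliers, and then to recover the full-row-rank requirement from the regularity assumption rather than from a purely topological limiting argument. First note that $(r,z)\in\Omega$: the inequality $\|x(z)-z\|\le\hat{\delta}$ does not involve $k$, and $\|r^k\|\le\hat{\delta}$ for every $k$ forces $\|r\|\le\hat{\delta}$ in the limit, so it is meaningful to ask whether $\mathcal{B}$ is a basic set of $(r,z)$. By Lemma \ref{lemma: continuous solution}, $x^{*}(r^k,z)\to x^{*}(r,z)$.

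Next I would extract limiting multipliers. For each $k$ pick $(y^k,\mu^k)\in\R^m_+\times\R^l_+$ witnessing that $\mathcal{B}$ is a basic set of $(r^k,z)$; in particular $\max\{\|y^k\|,\|\mu^k\|\}\le\Lambda$, so after passing to a subsequence we may assume $(y^k,\mu^k)\to(y,\mu)$ with $y\in\R^m_+$, $\mu\in\R^l_+$, and $\max\{\|y\|,\|\mu\|\}\le\Lambda$. Since $\nabla f$, $\nabla h$, $\nabla g$ are continuous (indeed Lipschitz over $X$ by Assumption \ref{assumption: basic assumption}), and $h$, $g$, $x^{*}(\cdot,z)$ are continuous while $r^k\to r$, taking $k\to\infty$ in \eqref{eq: basic set 1}--\eqref{eq: basic set 3} evaluated at $(r^k,z)$ shows that $(y,\mu)$ satisfies \eqref{eq: basic set 1}--\eqref{eq: basic set 3} at $(r,z)$; note \eqref{eq: basic set 3} only constrains the index set $\mathcal{B}$ and is preserved trivially.

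The remaining and decisive point is that $J_{\mathcal{B}}(x^{*}(r,z))$ must still be of full row rank; this is the main obstacle, because full row rank is an open condition and need not by itself survive a limit. I would settle it through the regularity assumption: from \eqref{eq: basic set 2} at $(r^k,z)$ one has $\mathcal{B}\subseteq A_{r^k}[x^{*}(r^k,z)]$ for every $k$, and then Lemma \ref{eq: bounded hessian of submatrix} (parts 1 and 2, which provide the $\sigma_{X^*}/2$ lower bound on the smallest singular value of any submatrix of the active-constraint Jacobian evaluated at $x^{*}(r^k,z)$) gives $\sigma_{\min}(J_{\mathcal{B}}(x^{*}(r^k,z))^{\top})\ge\sigma_{X^*}/2$ uniformly in $k$. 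Since $x^{*}(r^k,z)\to x^{*}(r,z)$ and the smallest singular value is continuous, we obtain $\sigma_{\min}(J_{\mathcal{B}}(x^{*}(r,z))^{\top})\ge\sigma_{X^*}/2>0$, so $J_{\mathcal{B}}(x^{*}(r,z))$ has full row rank. Together with the previous paragraph, every requirement in the definition of a basic set holds at $(r,z)$, which proves the claim. The delicate part is exactly this singular-value step: one must produce a $k$-uniform lower bound so that the rank does not collapse in the limit, and keeping track of which active set ($A$ versus the perturbed $A_r$) and which base point enter Lemma \ref{eq: bounded hessian of submatrix} needs some care.
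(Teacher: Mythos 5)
Your proof is correct and follows essentially the same route as the paper's: extract a convergent subsequence of the bounded multipliers, pass to the limit in the system \eqref{eq: basic set}, and recover the full-row-rank condition from the singular-value bounds of Lemma \ref{eq: bounded hessian of submatrix} rather than from the limit alone. The only (immaterial) difference is that you obtain the bound $\sigma_{\min}\ge\sigma_{X^*}/2$ uniformly along the sequence and then invoke continuity of the smallest singular value, whereas the paper applies the same lemma directly at the limit point $x^*(r,z)$ after observing that $\mathcal{B}$ lies in its active set.
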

\begin{proof}
By definition of the basic set, there exist $y^k \in \R^{m}_+$ and $\mu^k \in \R^l_+$ such that
\begin{align*}
    & \nabla f(x^*(r^k, z)) + J(x^*(r^k, z))^\top [(y^k)^\top, (\mu^k)^\top] ^\top + p(x^*(r^k, z) - z) \\
    & h_i(x^*(r^k,z)) = r^k_i,\ g_j(x^*(r^k,z))=0, ~\forall i,j\in \mathcal{B}\\
    & y_i^k=\mu_j^k=0, ~\forall i,j\notin \mathcal{B},\\
    & \max\{\|y^k\|, \|\mu^k\| \} \leq \Lambda.
\end{align*}
By the boundedness of $\|y^k\|$ and $\|\mu^k\|$, we let $(y,\mu)$ be a limit point of $\{(y^k,\mu^k)\}_{k \in \N}$. Passing to a subsequence if necessary, we may assume that $(r^k,y^k,\mu^k)\rightarrow (r,y,\mu)$ as $k\rightarrow+\infty$. Taking limits on the above system, we see that conditions \eqref{eq: basic set} are satisfied by the limit $(x^*(r, z), y, \mu)$. Moreover, $\mathcal{B}$ is a subset of $A[x^*(r,z)]$ and hence is a full rank matrix. Therefore, the result follows.
\end{proof}

We next show that $\|y^*(r,z)-y^*(r',z)\|$ can be bounded by $\|x^*(r,z)-x^*(r',z)\|$ if $r$ and $r'$ share a common basic set.
\begin{lemma}\label{label: control dual by primal}
Let $(r,z), (r',z)\in \Omega$. If $r$ and $r'$ share a common basic set, then 
\begin{align}
    \|y^*(r,z)-y^*(r',z)\|\le \frac{2\left(L_f + p + \Lambda \sqrt{L_h^2 + L_g^2} \right)}{\sigma_{X^*}} \|x^*(r,z)-x^*(r',z)\|.  \notag 
\end{align}
\end{lemma}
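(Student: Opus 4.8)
The plan is to differentiate the stationarity condition \eqref{eq: basic set 1} attached to the common basic set $\mathcal{B}$ and then invert the full-row-rank Jacobian block $J_{\mathcal{B}}$ to trade a primal perturbation for a dual one, which is the same mechanism used for the primal error bounds earlier in the paper. Write $x := x^*(r,z)$ and $x' := x^*(r',z)$, and let $w := [y^\top, \mu^\top]^\top$ and $w' := [(y')^\top, (\mu')^\top]^\top$ be the multiplier vectors certifying that $\mathcal{B}$ is a basic set of $(r,z)$ and of $(r',z)$, respectively. By \eqref{eq: basic set 3} both $w$ and $w'$ are supported on $\mathcal{B}$, hence so is $w-w'$; by \eqref{eq: dual bound} we have $\|w\| \le \Lambda$; and since the active Jacobians satisfy LICQ by Lemma \ref{eq: bounded hessian of submatrix}, the KKT multipliers are unique, so the $[m]$-block of $w$ is exactly $y^*(r,z)$ and the $[m]$-block of $w'$ is exactly $y^*(r',z)$, which gives $\|y^*(r,z) - y^*(r',z)\| \le \|w - w'\|$.

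First I would subtract the instance of \eqref{eq: basic set 1} at $(r,z)$ from the one at $(r',z)$ and rearrange, using the splitting $J(x)^\top w - J(x')^\top w' = J(x')^\top (w - w') + \big(J(x) - J(x')\big)^\top w$ and the fact that $w-w'$ lives on $\mathcal{B}$, to obtain
\[
    J_{\mathcal{B}}(x')^\top (w - w')_{\mathcal{B}} = -\big(\nabla f(x) - \nabla f(x')\big) - \big(J(x) - J(x')\big)^\top w - p(x - x').
\]
Next I would bound the right-hand side term by term: $\|\nabla f(x) - \nabla f(x')\| \le L_f \|x - x'\|$ by Assumption \ref{assumption: basic assumption}; $\|J(x) - J(x')\| \le \|J(x) - J(x')\|_F \le \sqrt{L_h^2 + L_g^2}\,\|x - x'\|$ from the componentwise Lipschitz bounds on $\nabla h_i$ and $\nabla g_j$ together with the definitions of $L_h,L_g$; and $\|w\| \le \Lambda$. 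This gives a right-hand side of norm at most $\big(L_f + p + \Lambda \sqrt{L_h^2 + L_g^2}\big)\|x - x'\|$.

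For the left-hand side, since $(r',z) \in \Omega$ and the defining equalities \eqref{eq: basic set 2} show that $\mathcal{B}$ is contained in the active set of $x^*(r',z)$, Lemma \ref{eq: bounded hessian of submatrix} yields $\sigma_{\min}\big(J_{\mathcal{B}}(x')^\top\big) \ge \sigma_{X^*}/2$, so $\|J_{\mathcal{B}}(x')^\top (w - w')_{\mathcal{B}}\| \ge (\sigma_{X^*}/2)\,\|(w-w')_{\mathcal{B}}\|$. Combining the two estimates and recalling $\|y^*(r,z) - y^*(r',z)\| \le \|w - w'\| = \|(w - w')_{\mathcal{B}}\|$ yields the claimed inequality. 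I expect the main obstacle to be the bookkeeping in the first paragraph — confirming that the multipliers the basic set supplies are genuinely $y^*(r,z)$ and $y^*(r',z)$, and that the singular-value lower bound of Lemma \ref{eq: bounded hessian of submatrix} really applies to the specific block $J_{\mathcal{B}}(x^*(r',z))$; both reduce to the full-row-rank / LICQ structure already established there, after which the remaining estimates are routine applications of Lipschitz continuity and the triangle inequality.
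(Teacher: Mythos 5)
Your proof follows essentially the same route as the paper's: subtract the two stationarity conditions supported on the common basic set $\mathcal{B}$, lower-bound the left-hand side via $\sigma_{\min}(J_{\mathcal{B}}^\top) \ge \sigma_{X^*}/2$ from Lemma \ref{eq: bounded hessian of submatrix}, and upper-bound the right-hand side using the Lipschitz constants $L_f$, $\sqrt{L_h^2+L_g^2}$ (via the Frobenius norm) and $\|w\|\le\Lambda$. The only cosmetic difference is which of the two points carries the Jacobian in the $J^\top(w-w')$ term, which is immaterial by symmetry; your extra care in identifying the basic-set multipliers with $y^*(r,z)$, $y^*(r',z)$ is a point the paper passes over implicitly.
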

\begin{proof}
Let $(y, \mu)$ and $(y', \mu')$ be corresponding multipliers for $r$ and $r'$. We use $v = [y_\mathcal{B}^\top, \mu_{\mathcal{B}}^\top]^\top$ (resp. $v' = [(y'_\mathcal{B})^\top, (\mu'_{\mathcal{B}})^\top]^\top$) to denote subvectors of $(y, \mu)$ (resp. $(y, \mu)$) with components in the basic set $\mathcal{B}$. Also recall that $y_i = \mu_j = 0$ for $i, j\notin \mathcal{B}$. The optimality conditions of $x^*(r,z)$ and $x^*(r',z)$ read
\begin{align*}
    0 = & \nabla f(x^*(r,z)) + p(x^*(r,z) - z) + J_{\mathcal{B}}(x^*(r,z))^\top v,  \\
    0 = & \nabla f(x^*(r',z)) + p(x^*(r',z) - z) + J_{\mathcal{B}}(x^*(r',z))^\top v',
\end{align*}
so we have 
\begin{align}
    & \frac{\sigma_{X^*}}{2}\|v-v'\| \leq    \| J_{\mathcal{B}}(x^*(r,z))^\top(v-v')\| \notag \\
    \leq & \|\nabla f(x^*(r',z)) - \nabla f(x^*(r,z))\| + p\|x^*(r',z) - x^*(r,z)\| + \|(J_{\mathcal{B}}(x^*(r',z))- J_{\mathcal{B}}(x^*(r,z)))^\top v'\| \notag \\ 
    \leq & (L_f + p)\|x^*(r',z) - x^*(r,z)\| + \Lambda \|J_{\mathcal{B}}(x^*(r',z))- J_{\mathcal{B}}(x^*(r,z))\|.\notag 
\end{align}
In addition, it holds that
\begin{align*}
    & \|J_{\mathcal{B}}(x^*(r',z))- J_{\mathcal{B}}(x^*(r,z))\|^2  \leq  \|J_{\mathcal{B}}(x^*(r',z))- J_{\mathcal{B}}(x^*(r,z))\|_F^2  \\
    \leq &  \|J(x^*(r',z)) - J(x^*(r,z))\|_F^2 \\
    = & \sum_{i=1}^m \|\nabla h_i(x^*(r',z)) - \nabla h_i(x^*(r,z))\|^2 +  \sum_{j=1}^l \|\nabla g_j(x^*(r',z)) - \nabla g_j(x^*(r,z))\|^2  \\
    \leq & (L_h^2 + L_g^2) \|x^*(r',z) - x^*(r,z)\|^2. 
\end{align*}
Combining the above inequalities, we have 
\begin{align*}
    \|y^*(r,z)-y^*(r',z)\| \leq \|v-v'\| \leq \frac{2\left(L_f + p +  \Lambda \sqrt{L_h^2 + L_g^2} \right)}{\sigma_{X^*}}  \|x^*(r',z) - x^*(r,z)\|.
\end{align*}
This completes the proof. 
\end{proof}

\begin{lemma}\label{lemma: local eb}
For any $(r,z), (r',z) \in \Omega$ sharing a common basic set, we have
$$\|x^*(r,z)-x^*(r',z)\|\le \frac{2\left(L_f + p + \Lambda \sqrt{L_h^2 + L_g^2} \right)}{\mu_K \sigma_{X^*} }   \|r-r'\|. $$
\end{lemma}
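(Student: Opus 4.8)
The plan is to combine strong convexity of the smoothed objective with convexity of the active constraint functions, using the common basic set $\mathcal{B}$ to make the cross terms telescope. Write $x := x^*(r,z)$, $x' := x^*(r',z)$, and let $(y,\mu)$ and $(y',\mu')$ be the multiplier pairs furnished by the basic-set conditions \eqref{eq: basic set} for $(r,z)$ and $(r',z)$, with $v := [y_\mathcal{B}^\top,\mu_\mathcal{B}^\top]^\top$ and $v' := [(y'_\mathcal{B})^\top,(\mu'_\mathcal{B})^\top]^\top$. Set $F(u) := f(u) + \tfrac{p}{2}\|u-z\|^2$; by \eqref{eq: p range}, $F$ is $\mu_K$-strongly convex, so
\begin{align*}
\mu_K\|x-x'\|^2 \le \langle \nabla F(x) - \nabla F(x'),\, x - x'\rangle .
\end{align*}

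First I would unpack \eqref{eq: basic set 1}: it says exactly $\nabla F(x) = -J_\mathcal{B}(x)^\top v$ and $\nabla F(x') = -J_\mathcal{B}(x')^\top v'$. Substituting, the right-hand side above becomes $-\sum_{i\in\mathcal{B}} v_i\langle \nabla c_i(x), x-x'\rangle + \sum_{i\in\mathcal{B}} v'_i\langle \nabla c_i(x'), x-x'\rangle$, where $c_i$ denotes the corresponding component of $h$ (for $i\in\mathcal{B}\cap[m]$) or of $g$ (for $i\in\mathcal{B}\cap[m+1{:}m+l]$). Since every $c_i$ is convex and every $v_i,v'_i\ge 0$, the subgradient inequalities $\langle\nabla c_i(x),x-x'\rangle \ge c_i(x)-c_i(x')$ and $\langle\nabla c_i(x'),x-x'\rangle \le c_i(x)-c_i(x')$ give, after multiplying by the nonnegative multipliers,
\begin{align*}
\mu_K\|x-x'\|^2 \le \sum_{i\in\mathcal{B}} (v_i - v'_i)\big(c_i(x') - c_i(x)\big).
\end{align*}
Now \eqref{eq: basic set 2} pins the constraint values: for $i\in\mathcal{B}\cap[m]$ we have $c_i(x)=r_i$, $c_i(x')=r'_i$, while for the $g$-indices in $\mathcal{B}$ we have $c_i(x)=c_i(x')=0$, so the $g$-part drops out entirely. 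Hence the sum reduces to $\sum_{i\in\mathcal{B}\cap[m]}(y_i-y'_i)(r'_i-r_i)$, which by Cauchy--Schwarz is at most $\|y^*(r,z)-y^*(r',z)\|\,\|r-r'\|$.

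Finally I would invoke Lemma \ref{label: control dual by primal}, which applies precisely because $(r,z)$ and $(r',z)$ share the basic set $\mathcal{B}$, to get $\|y^*(r,z)-y^*(r',z)\| \le \frac{2(L_f+p+\Lambda\sqrt{L_h^2+L_g^2})}{\sigma_{X^*}}\|x-x'\|$. Plugging this in yields $\mu_K\|x-x'\|^2 \le \frac{2(L_f+p+\Lambda\sqrt{L_h^2+L_g^2})}{\sigma_{X^*}}\|x-x'\|\,\|r-r'\|$; dividing through by $\|x-x'\|$ (the case $x=x'$ being trivial) gives exactly the claimed bound. The only delicate points are keeping the signs straight in the two convexity estimates and confirming that the basic-set structure forces the $g$-terms to vanish and the multipliers outside $\mathcal{B}$ to be zero; these are bookkeeping rather than genuine obstacles, so I expect the proof to be short once the notation is set up.
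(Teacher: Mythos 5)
Your proposal is correct and follows essentially the same route as the paper: both arguments use $\mu_K$-strong convexity of the proximal objective to reduce the problem to bounding $\langle y^*(r,z)-y^*(r',z),\, r'-r\rangle$, exploit the common basic set to replace active constraint values by $r_i$, $r'_i$ (with the $g$-terms and inactive multipliers vanishing), and then close via Cauchy--Schwarz and Lemma~\ref{label: control dual by primal}. The only difference is cosmetic: you use gradient monotonicity plus convexity of the individual constraint functions, whereas the paper sums two function-value strong-convexity inequalities at $x^*(r,z)$ and $x^*(r',z)$; the two manipulations yield the same intermediate bound.
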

\begin{proof}
    Since $x^*(r,z) = \argmin_{x\in X} f(x) + \langle y^*(r, z), h(x) - r \rangle + \frac{p}{2}\|x-z\|^2$, where the objective is strongly convex with modulus $\mu_K = p - L_f$, we have 
    \begin{align*}
        \frac{\mu_K}{2} \|x^*(r,z) - x^*(r',z)\|^2 \leq & f(x^*(r', z)) + \langle y^*(r, z), h(x^*(r',z)) - r \rangle + \frac{p}{2}\|x^*(r',z) - z\|^2  \\
        & - \left (f(x^*(r, z)) + \langle y^*(r, z), h(x^*(r,z)) - r \rangle + \frac{p}{2}\|x^*(r,z) - z\|^2  \right) \\
    \end{align*}
    similarly, the optimality of $x^*(r', z)$ gives
    \begin{align*}
        \frac{\mu_K}{2} \|x^*(r,z) - x^*(r',z)\|^2 \leq & f(x^*(r, z)) + \langle y^*(r', z), h(x^*(r,z)) - r' \rangle + \frac{p}{2}\|x^*(r,z) - z\|^2  \\
        & - \left (f(x^*(r', z)) + \langle y^*(r', z), h(x^*(r',z)) - r' \rangle + \frac{p}{2}\|x^*(r',z) - z\|^2  \right).
    \end{align*}
    Summing the above two inequalities, we have 
    \begin{align}
        \mu_K \|x^*(r,z) - x^*(r',z)\|^2  \leq & \langle  y^*(r,z) - y^*(r', z), h(x^*(r',z)) - h(x^*(r, z))\rangle \notag \\  
        =  &  \langle  y^*(r,z) - y^*(r', z), r' - r\rangle \leq  \|y^*(r,z) - y^*(r', z)\| \| r' - r\|,  \label{eq: bound primal by inner product}
    \end{align}
    where the equality is due to $h_i(x^*(r, z)) = r_i$ and $h_i(x^*(r', z)) = r'_i$ for all $i$ in the common basic set, and $y_i^*(r, z) = y_i^*(r', z) = 0$ for all $i$ not in the common basic set. Combining \eqref{eq: bound primal by inner product} with Lemma \ref{label: control dual by primal} proves the claim. 
\end{proof}

We perform a decomposition analysis as follows. 
\begin{proposition}\label{prop: global perturbation eb}
Let $\tilde{r}$ be given in Lemma \ref{lemma: perturbation r}, and suppose that $(\tilde{r}, z) \in \Omega$. There exist finite constants $0=\theta^0<\theta^1<\cdots <\theta^K=1$ such that $(\theta^k\tilde{r},z),(\theta^{k+1}\tilde{r},z)$ share a basic set for any $k\in \{0,\cdots,K-1\}$. Consequently, we have 
\begin{align}
    \|x^*(\tilde{r}, z) - x^*(0, z)\| \leq  \frac{2\left(L_f + p + \Lambda \sqrt{L_h^2 + L_g^2} \right)}{\mu_K \sigma_{X^*} } \|\tilde{r}\|. \notag 
\end{align}
\end{proposition}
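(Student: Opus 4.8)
Here is how I would attack Proposition~\ref{prop: global perturbation eb}.

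\medskip

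The plan is to travel along the ray $\theta\mapsto\theta\tilde r$, $\theta\in[0,1]$, and glue together the local estimate of Lemma~\ref{lemma: local eb}. First observe that $(\theta\tilde r,z)\in\Omega$ for every $\theta\in[0,1]$, since $\|\theta\tilde r\|\le\|\tilde r\|\le\hat\delta$ while $\|x(z)-z\|\le\hat\delta$ by hypothesis. By Lemma~\ref{lemma: equivalence between perturbed problems}, $x^*(\theta\tilde r,z)$ is the optimal solution of the convex program $\min_{x\in X}\{f(x)+\tfrac p2\|x-z\|^2:\ h(x)\le\theta\tilde r\}$ and hence satisfies its KKT system; combined with Lemma~\ref{eq: bounded hessian of submatrix} (full-row-rank active Jacobian, multipliers bounded by $\Lambda$), this lets us select, for each $\theta$, at least one basic set. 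Writing $\Theta_{\mathcal B}:=\{\theta\in[0,1]:\mathcal B\text{ is a basic set of }(\theta\tilde r,z)\}$ for $\mathcal B\subseteq[m+l]$, we get $[0,1]=\bigcup_{\mathcal B}\Theta_{\mathcal B}$, a union of at most $2^{m+l}$ sets, each \emph{closed} by Lemma~\ref{lemma: continuity of basic sets}. Granting for the moment the existence of an increasing partition $0=\theta^0<\theta^1<\dots<\theta^K=1$ with consecutive nodes sharing a basic set, the stated bound is immediate: each consecutive pair lies in $\Omega$ and shares a basic set, so Lemma~\ref{lemma: local eb} and a telescoping argument give
\begin{align*}
\|x^*(\tilde r,z)-x^*(0,z)\| &\le \sum_{k=0}^{K-1}\|x^*(\theta^{k+1}\tilde r,z)-x^*(\theta^k\tilde r,z)\| \\
&\le \frac{2\left(L_f+p+\Lambda\sqrt{L_h^2+L_g^2}\right)}{\mu_K\sigma_{X^*}}\,\|\tilde r\|\sum_{k=0}^{K-1}(\theta^{k+1}-\theta^k) = \frac{2\left(L_f+p+\Lambda\sqrt{L_h^2+L_g^2}\right)}{\mu_K\sigma_{X^*}}\,\|\tilde r\|,
\end{align*}
where the last equality uses $\sum_{k=0}^{K-1}(\theta^{k+1}-\theta^k)=\theta^K-\theta^0=1$.

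\medskip

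It remains to produce the finite increasing partition. The natural construction is greedy/parametric: start at $\theta^0=0$; having reached $\theta^k<1$, pick a basic set $\mathcal B$ of $(\theta^k\tilde r,z)$ and let $\theta^{k+1}$ be the largest $\theta$ for which the whole interval $[\theta^k,\theta]$ still carries a common basic set $\mathcal B$. Closedness of $\Theta_{\mathcal B}$ forces $\theta^{k+1}\in\Theta_{\mathcal B}$, so $(\theta^k\tilde r,z)$ and $(\theta^{k+1}\tilde r,z)$ share $\mathcal B$. (An alternative, purely topological route to a finite—but not a priori monotone—chain: the transitive closure of the relation ``$(\theta\tilde r,z)$ and $(\theta'\tilde r,z)$ share a basic set'' has finitely many equivalence classes, each a finite union of the closed sets $\Theta_{\mathcal B}$ and hence closed; since $[0,1]$ is connected there is a single class, giving a finite chain from $0$ to $1$, which one then reorders into an increasing partition using the step below.)

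\medskip

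I expect the main obstacle to be exactly the point that makes the greedy procedure terminate after finitely many steps, namely the \emph{local rigidity} of basic sets: the combinatorial structure of the solution switches only finitely often along the ray. This is where strong convexity $\mu_K=p-L_f>0$ and Assumption~\ref{assumption: convex regularity} come in. For a fixed $\mathcal B$, the reduced KKT system (stationarity together with $h_i(x)=\theta\tilde r_i$, $g_j(x)=0$ for $i,j\in\mathcal B$) has an invertible Jacobian, because the reduced Hessian—$\nabla^2 f+pI$ plus a nonnegative combination of constraint Hessians—is positive definite and $J_{\mathcal B}$ has full row rank; by the implicit function theorem $x^*(\cdot\,\tilde r,z)$ agrees with a single $C^1$ branch throughout $\Theta_{\mathcal B}$, so each $\Theta_{\mathcal B}$ is a finite union of intervals and collecting the finitely many endpoints over all $\mathcal B$ yields the partition. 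The delicate bookkeeping here is verifying that along each such interval the implicit-function branch \emph{remains the global optimum} of the inequality-constrained problem—i.e., that primal feasibility of the inactive constraints and nonnegativity of all multipliers are preserved—which is precisely what the closedness statement Lemma~\ref{lemma: continuity of basic sets} is designed to supply, so I would lean on it to carry that part rather than redo the limiting argument by hand.
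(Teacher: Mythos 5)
Your overall strategy is the paper's: walk along the segment $\theta\mapsto\theta\tilde r$, apply Lemma~\ref{lemma: local eb} to consecutive nodes sharing a basic set, and telescope (noting $\sum_k(\theta^{k+1}-\theta^k)\|\tilde r\|=\|\tilde r\|$). That final telescoping step is correct and identical to the paper's. The gap is in the construction of the partition itself, in two places. First, your greedy step defines $\theta^{k+1}$ as the largest $\theta$ such that the \emph{entire interval} $[\theta^k,\theta]$ carries a fixed basic set $\mathcal B$. Closedness of $\Theta_{\mathcal B}$ (Lemma~\ref{lemma: continuity of basic sets}) does not imply $\Theta_{\mathcal B}$ contains a right-neighborhood of $\theta^k$ — a closed set can have $\theta^k$ as an isolated point or as the right endpoint of a component — so this supremum may equal $\theta^k$ and the procedure may fail to advance. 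Second, your termination argument rests on the claim that each $\Theta_{\mathcal B}$ is a finite union of intervals via the implicit function theorem; but the IFT only gives a local $C^1$ KKT branch, and nothing you cite controls how often the multiplier-nonnegativity and inactive-constraint-feasibility conditions switch sign along that branch, so ``finite union of intervals'' is unsubstantiated.

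The paper sidesteps both problems with a weaker, purely pairwise definition: $\theta^{k+1}$ is the supremum of all $\lambda\in(\theta^k,1]$ such that $\lambda\tilde r$ shares \emph{some} basic set with $\theta^k\tilde r$ (no requirement on the intermediate points, and no basic set fixed in advance). Advancement past $\theta^k$ is obtained by taking a sequence $\theta^k_j\downarrow\theta^k$, using the finiteness of the family of candidate basic sets to extract a common $\mathcal B$ along a subsequence, and invoking Lemma~\ref{lemma: continuity of basic sets} to conclude $\mathcal B$ is also a basic set of $\theta^k\tilde r$; the same lemma applied from below shows $\theta^{k+1}$ itself shares a basic set with $\theta^k$. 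Finiteness of $K$ then follows from a combinatorial pigeonhole exploiting the maximality in the definition: a given $\mathcal B$ can witness the sharing for at most two indices $k$ (three indices $k_1<k_2<k_3$ would force $\theta^{k_1+1}\ge\theta^{k_3}>\theta^{k_2}\ge\theta^{k_1+1}$), so $K\le 2\cdot 2^{m+l}$. No interval structure of $\Theta_{\mathcal B}$ is needed. You would need to replace your advancement and termination arguments with something of this kind for the proof to close.
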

\begin{proof}
We construct the constants recursively. Suppose that we already have $\theta^{k}$, and we shall prove the Existence of $\theta \in (\theta^k,1]$ sharing a common basic set with $\theta^{k}$. Let $\{\theta^k_j\}_{j \in \N}$ be a sequence convergent to $\theta^k$ from above. Since the choice of basic sets is finite, passing to a subsequence if necessary, we may assume that $\theta^k_j\tilde{r}$ share a common basic set $\mathcal{B}$ without loss of generality. By Lemma \ref{lemma: continuity of basic sets}, $\mathcal{B}$ is also a basic set of $\theta^k\tilde{r}$. Therefore, there exists some $\theta \in (\theta^k,1]$ such that $ \theta \tilde{r}$ shares a common active set with $\theta^k\tilde{r}$. Now take 
$\theta^{k+1}$ to be the supremum of $\lambda \in (\lambda^k, 1]$ such that $\lambda \tilde{r}$ shares a common basic set with $\theta^{k} \tilde{r}$. Again by Lemma \ref{lemma: continuity of basic sets}, $\lambda^{k+1}\tilde{r}$ and $\theta^k\tilde{r}$ share a common basic set. 

Next we show that $K$ is finite. In particular, we claim that any set $\mathcal{B}\subset [m+\ell]$ can be a basic set of $\theta^k \tilde{r}$ for at most two indices $ k \in \{0, 1, \cdots, K\}$: if $\mathcal{B}$ is a basic set of $\theta^{k_1} \tilde{r},\theta^{k_2}\tilde{r},\theta^{k_3}\tilde{r}$ with indices $k_1<k_2<k_3$, then $k_3>k_1+1$, contradicting to the definition of $\theta^{k_2}$. Therefore, $K$ is upper bounded by $2 \times 2^{(m+l)}$ and hence is finite. Finally, using Lemma \ref{lemma: local eb} and the triangle inequality complete the proof. 
\end{proof}

Finally we have all the pieces to prove the error bound in Proposition \ref{prop: strong eb}. 
\begin{proof}[Proof of Proposition \ref{prop: strong eb}]
For ease of notation, write $z^t = z$, $y^t = y$, $y^{t+1} = y_+(z)$, and $y_+^t(z^t) = y_+(z)$. Recall $\hat{\delta}$ from Lemma \ref{eq: bounded hessian of submatrix}. Then choose $\delta > 0$ such that 
\begin{align*}
\delta \leq \min \left\{ \frac{\hat{\delta}}{\frac{1}{\alpha} + K_h\kappa_2}, \hat{\delta} \right\},
\end{align*}
so that $\|z - x(z)\| \leq \hat{\delta}$ and 
\begin{align}
    \| \tilde{r}\| \leq & \frac{1}{\alpha} \| y_+(z) -y\| + \|h(x(y,z)) - h(x(y_+(z), z))\|\leq \left(  \frac{1}{\alpha} + K_h\kappa_2 \right) \| y_+(z) -y\| \leq \hat{\delta}, \notag 
\end{align}
where $\tilde{r}$ is  defined in \eqref{eq: perturbation_r}, and the second inequality is due to $h$ being $K_h$-Lipschitz and $x(\cdot, z)$  being $\kappa_2$-Lipschitz.  Then we have 
\begin{align*}
     \|x(y_+(z)) - x(z)\| 
    = & \|x^*(\tilde{r}, z) - x^*(0, z)\| \\ 
    \leq & \frac{2\left(L_f + p + \Lambda \sqrt{L_h^2 + L_g^2} \right)}{ \mu_K \sigma_{X^*} } \|\tilde{r}\| \\
    \leq & \frac{2\left(L_f + p + \Lambda \sqrt{L_h^2 + L_g^2} \right) \left(\frac{1}{\alpha} + K_h \kappa_2 \right) }{\mu_K \sigma_{X^*}} \|y - y_+(z)\|, 
\end{align*}
where the equality is due to Lemma \ref{lemma: perturbation r}, the first inequality is due to Proposition \ref{prop: global perturbation eb}, and the second inequality is due to the definition of $\tilde{r}$ in \eqref{eq: perturbation_r}. This completes the proof. 

\end{proof}

\bibliographystyle{plain}
\bibliography{paper_ref.bib}

\end{document}